\newtheorem{theorem}{Theorem}[section]
\newtheorem*{theorem*}{Theorem}
\newtheorem*{claim*}{Claim}
\newtheorem{proposition}[theorem]{Proposition}
\theoremstyle{definition}
\newtheorem{remark}[theorem]{Remark}
\newtheorem{lemma}[theorem]{Lemma}
\newtheorem{definition}[theorem]{Definition}
\begin{document}
\title[A von Neumann algebra characterization of property (T) for groupoids]{%
A von Neumann algebra characterization\\
of property (T) for groupoids}
\author{Martino Lupini}
\address{Martino Lupini\\
Mathematics Department\\
California Institute of Technology\\
1200 E. California Blvd\\
MC 253-37\\
Pasadena, CA 91125}
\email{lupini@caltech.edu}
\urladdr{http://www.lupini.org/}
\date{\today }
\subjclass[2000]{Primary 20L05, 46L10; Secondary 37A15}
\thanks{The author was partially supported by the NSF Grant DMS-1600186.}
\keywords{Groupoid, von Neumann algebra, property (T), Kazhdan groupoid,
bimodule, cohomology, representation}
\dedicatory{}

\begin{abstract}
For an arbitrary discrete probability-measure-preserving groupoid $G$, we provide a characterization of
property (T) for $G$ in terms of the groupoid von Neumann algebra $L(G)$.
More generally, we obtain a characterization of relative property (T) for a
subgroupoid $H\subset G$ in terms of the inclusion $L\left( H\right) \subset
L\left( G\right) $.
\end{abstract}

\maketitle


\section{Introduction}

Property (T) for countable probability-measure-preserving (pmp) equivalence
relations has been introduced by Zimmer in \cite{zimmer_cohomology_1981}.
The natural generalization to discrete pmp groupoids has been studied by
Anantharaman-Delaroche in \cite{ad_cohomology_2005}. In view of the
importance of property\ (T) in the setting of operator algebras, and the key
role it plays in Popa's deformation/rigidity theory, it is valuable to have
a characterization of property (T) for a discrete pmp groupoid $G$ solely in
terms of the inclusion $L^{\infty }(G^{0})\subset L(G)$, where $L(G)$ is the
groupoid von Neumann algebra of $G$ and $G^{0}$ is the unit space of $G$.
Such a characterization has been established by Connes and Jones in the case
when $G$ is a countable discrete group with infinite conjugacy classes, in
which case $L(G)$ is a II$_{1}$ factor. They showed in \cite%
{connes_property_1985} that, under those assumptions, $G$ has property (T)
if and only if $L(G)$ has property (T) in the sense defined therein. In the
case when $G$ is an ergodic II$_{1}$ equivalence relation, in which case $%
L(G)$ is a II$_{1}$ factor and $L^{\infty }(G^{0})$ is a Cartan subalgebra
of $L(G)$, such a characterization has been established by Popa in \cite%
{popa_correspondences_1986}. It is shown there that, under those
assumptions, $G$ has property (T) as defined by Zimmer if and only if the
inclusion $L(G^{0})\subset L(G)$ is co-rigid in the sense of \cite[Remark
2.6.1]{popa_class_2006}. The purpose of this paper is to provide a common
generalization of such characterizations, applicable to an arbitrary
discrete pmp groupoid $G$.

More generally, given a subgroupoid $H$ of $G$, we consider the natural
notion of \emph{relative }property (T) of $H$ in $G$, generalizing the usual
notion for groups. We then obtain a characterization of relative property
(T) of $H$ in $G$ in terms of the inclusion $L^{\infty }\left( X\right)
\subset L\left( H\right) \subset L(G)$, where $X$ is the common unit space
of $G$ and $H$. Again, such a characterization is applicable to an arbitrary
pair of discrete pmp groupoids. In the case of groups, relative property (T)
has been characterized in terms of group von Neumann algebra by Popa in \cite%
[Proposition 5.1]{popa_class_2006}. It is shown there that the pair $H\leq G$
has property (T) if and only if the inclusion $L\left( H\right) \subset
L\left( G\right) $ is rigid in the sense defined therein; see \cite[%
Definition 4.2]{popa_class_2006}.

More generally, we consider a notion of property (T) for a triple $K\leq
H\leq G$ consisting of a discrete pmp groupoid $G$ together with nested
subgroupoid $K,H$. Such a notion has been considered in the setting of
groups in \cite[Definition 2.3]{gardella_complexity_2017}. It subsumes property (T) for a
pair $H\leq G$ in the case when $K$ is the trivial subgroupoid of $G$, i.e.\
the unit space. Again we obtain, for an arbitrary such triple, a
characterization of property (T) in terms of the inclusions $L^{\infty
}\left( X\right) \subset L\left( K\right) \subset L\left( H\right) \subset
L(G)$. We also provide a cohomological characterization of such a notion,
and in particular of the notion of relative property (T). In the case of
property (T) for a single groupoid, such a characterization has been
obtained by Anantharaman-Delaroche in \cite{ad_cohomology_2005}.

The rest of this paper is divided into two sections, apart from this
introduction. In Section \ref{Section:T} we recall the fundamental notions
and definitions concerning groupoids to be used in the rest of the paper,
introduce the notion of property (T) for triples of groupoids, and obtain
the cohomological characterization mentioned above. In\ Section \ref%
{Section:rigid} we obtain a von Neumann algebra characterization of property
(T) for triples of groupoids, in terms of the groupoid von Neumann algebras.

Throughout the paper, we follow the convention that scalar products in
Hilbert spaces are linear in the second variable and conjugate-linear in the
first variable.

\section{Property (T) for groupoids\label{Section:T}}

\subsection{Groupoids}

A \emph{groupoid }is, briefly, a small category $G$ where every morphism is
invertible. In this case, the objects\emph{\ }of $G$ are also called \emph{%
units}, and the set of units is denoted by $G^{0}$. The morphisms in $G$ are
also called \emph{arrows}. As it is customary, we canonically identify every
object with the corresponding identity arrow. This allows one to regard $%
G^{0}$ as a subset of $G$. There are canonical source and range maps $%
s,r:G\rightarrow G^{0}$ that map each arrow $\gamma $ in $G$ to the objects $%
s(\gamma ),r(\gamma )\in G^{0}$ such that $\gamma $ is an arrow from $%
s(\gamma )$ to $r(\gamma )$. A pair of arrows $\left( \gamma ,\rho \right) $
is \emph{composable} if $s(\gamma )=r(\rho )$. The set of pairs of
composable arrows of $G$ is denoted by $G^{2}$. One can then regard
composition of arrows as a function $G^{2}\rightarrow G$, $\left( \gamma
,\rho \right) \mapsto \gamma \rho $. Since by assumption every arrow of $G$
is invertible, one can also consider the function $G\rightarrow G$, $\gamma
\mapsto \gamma ^{-1}$ that maps each arrow to its inverse. In the following,
given subsets $A,B$ of $G$, we let $AB$ be the set of arrows $\gamma \rho $
for $\left( \gamma ,\rho \right) \in G^{2}\cap \left( A\times B\right) $. If 
$\gamma \in G$, we also write $A\gamma $ and $\gamma A$ for $A\left\{ \gamma
\right\} $ and $\left\{ \gamma \right\} A$, respectively. Consistently, if $%
x $ is a unit of $G$, then $xA$ is the set of arrows in $A$ with range $x$,
and $Ax$ is the set of arrows in $A$ with source $x$.

A Borel groupoid is a groupoid $G$ endowed with a standard Borel structure
such that the set of objects is Borel, and composition and inversion of
arrows are Borel maps. A countable Borel groupoid is a standard Borel
groupoid such that $xG$ and $Gx$ are countable sets for every $x\in G^{0}$
or, equivalently, source and range maps are countable-to-one. In the
following we will tacitly use classical Borel selection theorem for
countable-to-one Borel maps as can be found in \cite[Section 18.C]%
{kechris_classical_1995}. A discrete probability-measure-preserving (pmp)
groupoid is a pair $\left( G,\mu \right) $ where $G$ is a countable Borel
groupoid and $\mu $ is a Borel probability measure on $G^{0}$ satisfying%
\begin{equation*}
\int_{x\in G^{0}}\left\vert xA\right\vert d\mu (x)=\int_{x\in
G^{0}}\left\vert Ax\right\vert d\mu (x)
\end{equation*}%
for every Borel subset $A$ of $G^{0}$. In such a case this expression
defines an extension of $\mu $ to a $\sigma $-finite Borel measure defined
on the whole of $G$. In the following we regard a discrete pmp groupoid $G$
as a measure space endowed with such a measure. One can also define a
canonical measure on $G^{2}$ given by%
\begin{equation*}
\mu _{G^{2}}(A)=\int_{x\in G^{0}}\left( A\cap \left( Gx\times xG\right)
\right) d\mu (x)\text{.}
\end{equation*}%
Given a non-null Borel subset $A$ of $G^{0}$ one can define the \emph{%
reduction }$G|_{A}$ to be the groupoid $AGA$ with set of objects $A$ endowed
with the measure $\mu _{A}:=\frac{1}{\mu (A)}\mu $. Such a reduction is
called \emph{inessential }if $A$ is conull. In the following, we identify
two discrete pmp groupoids whenever they have isomorphic inessential
reductions. A Borel subset $A$ of $G$ is invariant if $r\left( GA\right) =A$%
. The groupoid $G$ is ergodic if every invariant set $A\subset G^{0}$ is
either null or conull.

Suppose that $G$ is a discrete pmp groupoid, and $H$ is a standard Borel
groupoid. A \emph{homomorphism }from $G$ to $H$ is a Borel map $%
f:G\rightarrow H$ satisfying $s\left( f(\gamma )\right) =f\left( s(\gamma
)\right) $ and $r\left( f(\gamma )\right) =f\left( r(\gamma )\right) $ for
a.e.\ $\gamma \in G$, and $f\left( \gamma \rho \right) =f(\gamma )f(\rho )$
for a.e.\ $\left( \gamma ,\rho \right) \in G^{2}$. This is equivalent to the
assertion that there exists a conull Borel subset $A$ of $G^{0}$ such that $%
s\left( f(\gamma )\right) =f\left( s(\gamma )\right) $, $r\left( f(\gamma
)\right) =f\left( r(\gamma )\right) $, and $f\left( \gamma \rho \right)
=f(\gamma )f(\rho )$ for ever $\gamma ,\rho \in AGA$ \cite[Lemma 5.2]%
{ramsay_virtual_1971}. A subgroupoid of $G$ is a Borel subset $H$ of $G$
which is also a groupoid, such that $G$ and $H$ have the same unit space $%
\left( X,\mu \right) $, and the inclusion map $H\subset G$ is a homomorphism
from $H$ to $G$.

A (Borel) \emph{bisection }of a discrete pmp groupoid $G$ is a Borel subset $%
t$ of $G$ such that $xt$ and $tx$ have size at most $1$ for every $x\in G^{0}
$. Borel bisections naturally form an inverse semigroup with respect to the
operation 
\begin{equation*}
\left( t_{0},t_{1}\right) \mapsto t_{0}t_{1}=\left\{ \gamma \rho :\left(
\gamma ,\rho \right) \in \left( t_{0}\times t_{1}\right) \cap G^{2}\right\} 
\text{.}
\end{equation*}%
The full pseudogroup $\left[ \left[ G\right] \right] $ of $G$ is the inverse
semigroup consisting of Borel bisections modulo the relation of being equal
almost everywhere. In the following we will always identify Borel bisections
when they agree almost everywhere. The full group $\left[ G\right] $ is the
subset of $\left[ \left[ G\right] \right] $ consisting of the Borel
bisections $t$ such that $tt^{-1}=t^{-1}t=G^{0}$. This is a Polish group
with respect to the topology induced by the metric $d\left(
t_{0},t_{1}\right) =\mu \left( t_{0}\bigtriangleup t_{1}\right) $. A
countable subgroup $\Gamma $ of $\left[ G\right] $ \emph{covers }$G$ if $%
G=\bigcup \Gamma $. If $G$ is ergodic and $A,B$ are Borel subsets of $G^{0}$%
, then $\mu (A)=\mu (B)$ if and only if there exists $t\in \left[ G\right] $
such that $B=r\left( tA\right) $.

Clearly, any countable discrete group is, in particular, a discrete pmp
groupoids. Indeed, these are precisely the discrete pmp groupoids whose unit
space contains a single element. At the opposite end of the spectrum, every
countable pmp equivalence relation is a discrete pmp groupoid. Indeed, these
are precisely the discrete pmp groupoids $G$ which are \emph{principal}, in
the sense that the function $G\rightarrow G^{0}\times G^{0}$, $\gamma
\mapsto \left( r(\gamma ),s(\gamma )\right) $ is one-to-one. Thus, the class
of discrete pmp groupoids subsumes both countable discrete groups and
countable pmp equivalence relations.

\subsection{Representations of groupoids}

Suppose that $X$ is a standard probability space. A standard Borel space
fibered over $X$ is a standard Borel space $Z$ endowed with a distinguished
Borel map $p:Z\rightarrow X$. In this case we let, for $x\in X$, $%
Z_{x}:=p^{-1}\left\{ x\right\} $ be the corresponding \emph{fiber }over $x$.
We denote the space $Z$ also by $\bigsqcup_{x\in X}Z_{x}$. Given two
standard Borel spaces $Z,Z^{\prime }$ fibered over $X$ one can define the 
\emph{fibered product} 
\begin{equation*}
Z\ast Z^{\prime }=\left\{ \left( z,z^{\prime }\right) \in Z_{x}\times
Z_{x}^{\prime }:x\in X\right\} \subset Z\times Z^{\prime }\text{,}
\end{equation*}%
which is still a standard Borel space fibered over $X$. A fibered map $f$
from $Z$ to $Z^{\prime }$ is a Borel function that maps $Z_{x}$ to $%
Z_{x}^{\prime }$ for $x\in X$. If $Y$ is a standard Borel space, when we
regard $Y\times X$ as a standard Borel space fibered over $X$ with respect
to the product Borel structure and the projection to the second factor. In
particular, we regard $X$ as a standard Borel space fibered over itself via
the identity map. A section $\sigma $ for a standard Borel space $Z$ fibered
over $X$ is a fibered map from $X$ to $Z$. In this case, we let $\sigma _{x}$
be the value of $\sigma $ at $x\in X$.

A (Borel, complex) Hilbert bundle over $X$ is a standard Borel space $%
\mathcal{H}$ fibered over $X$ endowed fibered functions $0:X\rightarrow 
\mathcal{H}$ (zero section), $+:\mathcal{H}\ast \mathcal{H}\rightarrow 
\mathcal{H}$ (sum), and $\mathbb{C}\times \mathcal{H}\rightarrow \mathcal{H}$
(scalar multiplication) that define on each fiber $\mathcal{H}_{x}$ for $%
x\in X$ a (complex) vector space structure, and such that there exists a
sequence of sections $\left( \sigma _{n}\right) _{n\in \mathbb{N}}$ of $%
\mathcal{H}$ such that $\left\{ \sigma _{n,x}:n\in \mathbb{N}\right\} $ has
dense linear span in $\mathcal{H}_{x}$. The Gram-Schmidt orthonormalization
process shows that one can furthermore assume that $\left\{ \sigma
_{n,x}:n\in \mathbb{N}\right\} $ is an orthonormal basis of $\mathcal{H}_{x}$
for $x\in X$. In this case, we call $\left( \sigma _{n}\right) _{n\in 
\mathbb{N}}$ an \emph{orthonormal basic sequence }for $\mathcal{H}$. The 
\emph{unitary groupoid }$U(\mathcal{H})$ is the groupoid consisting of the
unitary operators $U:\mathcal{H}_{x}\rightarrow \mathcal{H}_{y}$ for $x,y\in
X$. This is a standard Borel groupoid when endowed with the standard Borel
structure generated by the source and range maps together with the functions 
$\left( U:\mathcal{H}_{x}\rightarrow \mathcal{H}_{y}\right) \mapsto
\left\langle \sigma _{n,y},U\sigma _{m,x}\right\rangle $ for $n,m\in \mathbb{%
N}$. The unit space of $U(\mathcal{H})$ can be identified with $X$. One can
also consider the space $L^{2}\left( X,\mathcal{H}\right) $ of sections for $%
\mathcal{H}$ satisfying%
\begin{equation*}
\int \left\Vert \xi _{x}\right\Vert ^{2}d\mu (x)<+\infty
\end{equation*}%
identified when they agree almost everywhere. This is a Hilbert space with
respect to the inner product 
\begin{equation*}
\left\langle \xi ,\eta \right\rangle =\int \left\langle \xi _{x},\eta
_{x}\right\rangle d\mu (x)
\end{equation*}%
for $\xi ,\eta \in L^{2}\left( X,\mathcal{H}\right) $.

Suppose that $G$ is a discrete pmp groupoid, and $\mathcal{H}$ is a
(complex) Hilbert bundle over $G^{0}$. A (unitary) representation of $\pi $
on $\mathcal{H}$ is a homomorphism from $G$ to $U(\mathcal{H})$ that is the
identity on the unit space. An invariant sub-bundle of $\mathcal{H}$ is a
Borel subset $\mathcal{K}$ of $\mathcal{H}$ such that $\mathcal{K}_{x}$ is a
subspace of $\mathcal{H}$ for a.e.\ $x\in G^{0}$, and $\pi _{\gamma }$ maps $%
\mathcal{K}_{s(\gamma )}$ onto $\mathcal{K}_{r(\gamma )}$ for a.e.\ $\gamma
\in G$. When $G$ is ergodic, the Borel function $x\mapsto \dim \mathcal{K}%
_{x}$ is constant almost everywhere, one can define its constant value to be
the \emph{dimension} of $\mathcal{K}$. Real Hilbert bundles and (orthogonal)
representations of discrete pmp groupoids on real Hilbert bundles can be
defined in a similar fashion.

A representation $\pi $ of $G$ on $\mathcal{H}$ induces a representation $%
\left[ \left[ \pi \right] \right] $ of the inverse semigroup $\left[ \left[ G%
\right] \right] $ on $L^{2}\left( G^{0},\mathcal{H}\right) $. This is
defined by setting 
\begin{equation*}
\left( \left[ \left[ \pi \right] \right] _{\sigma }\xi \right) _{x}=\left\{ 
\begin{array}{ll}
\pi _{x\sigma }\xi _{s\left( x\sigma \right) } & \text{if }x\in \sigma
\sigma ^{-1} \\ 
0 & \text{otherwise.}%
\end{array}%
\right.
\end{equation*}%
In particular the restriction $\left[ \pi \right] $ of $\left[ \left[ \pi %
\right] \right] $ to $\left[ G\right] $ is a continuous representation of
the Polish group $\left[ G\right] $.

A \emph{unit section }for $\mathcal{H}$ is a section $\xi $ such that $%
\left\Vert \xi _{x}\right\Vert =1$ for a.e.\ $x\in G^{0}$. A unit section $%
\xi $ for $\mathcal{H}$ is \emph{invariant} if $\pi _{\gamma }\xi _{s(\gamma
)}=\xi _{r(\gamma )}$ for a.e.\ $\gamma \in G$. We say that the
representation $\pi $ of an ergodic discrete pmp groupoid $G$ on $\mathcal{H}
$ is \emph{ergodic }if it has no invariant unit sections. This is equivalent
to the assertion that for some (equivalently, every) countable subgroup $%
\Gamma $ of $\left[ G\right] $ that covers $G$, $\left[ \pi \right]
|_{\Gamma }$ is ergodic. Let $\xi $ be a unit section for $\mathcal{H}$. We
say that $\xi $ is \emph{cyclic} if, for a.e.\ $x\in G^{0}$ one has that $%
\left\{ \pi _{\gamma }\xi _{s(\gamma )}:\gamma \in xG\right\} $ has dense
linear span in $\mathcal{H}_{x}$.

Let $\overline{\mathcal{H}}:=\bigsqcup_{x\in \mathcal{H}_{x}}\overline{%
\mathcal{H}}_{x}$, where $\overline{\mathcal{H}}_{x}$ denotes the conjugate
Hilbert space of $\mathcal{H}_{x}$, with canonical conjugate linear
isomorphism $\mathcal{H}_{x}\rightarrow \overline{\mathcal{H}}_{x}$, $\xi
\mapsto \overline{\xi }_{x}$. The \emph{conjugate representation }$\overline{%
\pi }$ of $G$ on $\overline{\mathcal{H}}$ is defined by $\overline{\pi }%
_{\gamma }\overline{\xi }=\overline{\pi _{\gamma }\xi }$ for $\gamma \in G$
and $\xi \in \mathcal{H}_{s(\gamma )}$.

Suppose that $\pi _{0}$ and $\pi _{1}$ are representations of $G$ on Hilbert
bundles $\mathcal{H}_{0}$ and $\mathcal{H}_{1}$.\ Then one can consider the
Hilbert bundle $\mathcal{H}_{0}\otimes \mathcal{H}_{1}:=\bigsqcup_{x\in
G^{0}}\mathcal{H}_{0,x}\otimes \mathcal{H}_{1,x}$ and the representation $%
\pi _{0}\otimes \pi _{1}$ of $G$ on $\mathcal{H}_{0}\otimes \mathcal{H}_{1}$
defined in the obvious way.

\begin{remark}
\label{Remark:extend}In the following we will frequently use the following
observation. Suppose that $G$ is an ergodic discrete pmp groupoid, $A\subset
G^{0}$ is a non-null Borel set, and $\pi $ is a representation of $G$ on $%
\mathcal{H}$. Then one can consider the representation $\pi _{A}$ of $G|_{A}$
on $\mathcal{H}|_{A}=\bigsqcup_{x\in A}\mathcal{H}_{x}$ obtained from $\pi $
by restriction. If $\eta $ is an invariant section for $\mathcal{H}|_{A}$,
then there exists a unique invariant section $\xi $ for $G$ such that $\xi
_{x}=\eta _{x}$ for $x\in A$. In particular, $\pi $ is ergodic if and only
if $\pi _{A}$ is ergodic.
\end{remark}

The notion of \emph{weak mixing} representation has been introduced in \cite[Definition 3.11]{gardella_complexity_2017}. The representation $\pi $ of an ergodic groupoid 
$G$ on $\mathcal{H}$ is weak mixing if, for every $\varepsilon >0$, $n\in 
\mathbb{N}$, and sections $\xi _{1},\ldots ,\xi _{n}$ for $\mathcal{H}$
there exists $t\in \left[ G\right] $ such that, for every $i,j\in \left\{
1,2,\ldots ,n\right\} $,%
\begin{equation*}
\int_{G^{0}}\left\vert \left\langle \xi _{j,x},\pi _{xt}\xi _{i,s\left(
xt\right) }\right\rangle \right\vert d\mu (x)\leq \varepsilon \text{.}
\end{equation*}%
Several equivalent characterizations of such a notion have been established
in \cite[Subsection 3.3]{gardella_complexity_2017}. Particularly, a representation $\pi $ of
an ergodic discrete pmp groupoid $G$ on $\mathcal{H}$ is weak mixing if and
only if for some (equivalently, every) countable subgroup $\Gamma $ of $%
\left[ G\right] $ that covers $G$, $\left[ \pi \right] |_{\Gamma }$ is weak
mixing, if and only if $\mathcal{H}$ does not have an finite-dimensional
invariant sub-bundle, if and only if $\pi \otimes \overline{\pi }$ is
ergodic.

\subsection{Property (T)}

Suppose that $\Gamma $ is a countable discrete group, and $\pi $ is a
representation of $\Gamma $ on a Hilbert space $\mathcal{H}$. If $F$ is a
subset of $\Gamma $, and $\varepsilon >0$, then a unit vector $\xi $ of $%
\mathcal{H}$ is $\left( F,\varepsilon \right) $-invariant if it satisfies $%
\left\Vert \pi _{\gamma }\xi -\xi \right\Vert \leq \varepsilon $ for every $%
\gamma \in F$. The representation $\pi $ has almost invariant vectors if,
for every finite subset $F$ of $\Gamma $ and for every $\varepsilon >0$, it
has an $\left( F,\varepsilon \right) $-invariant vector. The group $\Gamma $
has property (T) if every representation of $\Gamma $ that has almost
invariant unit vectors, it has an invariant unit vector \cite%
{bekka_kazhdans_2008}. A standard reference for the theory of property (T)
groups is \cite{bekka_kazhdans_2008}.

The notion of property (T) for pmp equivalence relations has been introduced
in \cite{zimmer_cohomology_1981}. A natural common generalization of the
notion of property (T) for discrete groups and pmp equivalence relations has
been considered in \cite{ad_cohomology_2005}. Let $G$ be a discrete pmp
groupoid with unit space $X$, and $\pi $ be a representation of $G$ on a
Hilbert bundle $\mathcal{H}$. If $F$ is a subset of $\left[ G\right] $ and $%
\varepsilon >0$, then we say that a unit section $\xi $ for $\mathcal{H}$ is 
$\left( F,\varepsilon \right) $-invariant if it satisfies $\left\Vert \left[
\pi \right] _{t}\xi -\xi \right\Vert \leq \varepsilon $ in $L^{2}\left( X,%
\mathcal{H}\right) $ for every $t\in F$. The representation $\pi $ has
almost invariant unit sections if, for every finite subset $F$ of $\left[ G%
\right] $ and $\varepsilon >0$, it has an $\left( F,\varepsilon \right) $%
-invariant unit section. The discrete pmp groupoid $G$ has property\ (T) if
every representation $\pi $ of $G$ that has almost invariant unit sections
it has an invariant unit section \cite[Definition 4.3]{ad_cohomology_2005}.

The notion of property (T) admits a natural \emph{relative }version for
subgroups. Suppose that $\Gamma $ is a countable discrete group, and $%
\Lambda \leq \Gamma $ is a subgroup. If $\pi $ is a representation of $%
\Gamma $ on a Hilbert space $\mathcal{H}$, then a unit vector $\xi $ in $%
\mathcal{H}$ is $\Lambda $-invariant if it is invariant for the restriction
of $\pi $ to $\Lambda $. Then $\Lambda $ has relative property (T) in $%
\Gamma $, or the pair $\Lambda \leq \Gamma $ has property (T), if every
representation of $\Gamma $ that has almost invariant unit vectors, it has a 
$\Lambda $-invariant unit vector. This notion admits a natural
generalization to discrete pmp groupoids. Suppose that $G$ is a discrete pmp
groupoid, $H$ is a subgroupoid of $G$, and $\pi $ is a unitary
representation of $G$ on a Hilbert bundle $\mathcal{H}$. Then a unit section 
$\xi $ for $\mathcal{H}$ is $H$-invariant if it is invariant for the
restriction of $\pi $ to $H$. Then $H$ has the relative property (T) in $G$,
or pair $H\leq G$ has property\ (T), if every representation of $G$ that has
almost invariant unit sections admits an $H$-invariant unit section.
Clearly, when $H=G$, this recovers property (T) for a single discrete pmp
groupoid.

A natural generalization of property (T) from pairs of groups to triples of
groups has been considered in \cite[Definition 2.3]{gardella_complexity_2017}. Suppose that $%
\Gamma $ is a countable discrete group, and $\Delta \leq \Lambda \leq \Gamma 
$ are nested subgroups. Then the triple $\Delta \leq \Lambda \leq \Gamma $
has property\ (T) if every representation of $\Gamma $ with almost invariant 
$\Delta $-invariant unit vectors admits a $\Lambda $-invariant unit vector.
Clearly, when $\Delta $ is the trivial subgroup one recovers the notion of
property (T) for pairs of groups. Naturally, one can generalize such a
notion to discrete pmp groupoids as follows. Suppose that $G$ is a discrete
pmp groupoid, and $K\leq H\leq G$ are nested subgroupoids.

\begin{definition}
\label{Definition:invariant-T-groupoid}The triple $K\leq H\leq G$ has
property (T) if, for every representation $\pi $ of $G$, if $\pi $ has
almost invariant $K$-invariant unit sections, then $\pi $ has an $H$%
-invariant unit section.
\end{definition}

\begin{remark}
When $H$ is ergodic, in Definition \ref{Definition:invariant-T-groupoid} one
can equivalently require that every representation of $G$ with almost
invariant $K$-invariant unit sections has a nonzero $H$-invariant unit
section $\xi $. Indeed, in this case one has that there exists $\delta >0$
such that $\left\Vert \xi _{x}\right\Vert =\delta $ for a.e.\ $x\in H^{0}$.
Therefore $\delta ^{-1}\xi $ is a $H$-invariant unit section.
\end{remark}

Again, when $K$ is the trivial subgroupoid of $H$---i.e.\ $K$ is equal to
the common unit space of $H$ and $G$---one recovers the notion of property
(T) for pairs $H\leq G$.

Several equivalent characterizations of property (T) for pairs of groups are
established in \cite{jolissaint_property_2005}. Furthermore, a cohomological
characterization of property (T) for discrete pmp groupoids is the main
result of \cite{ad_cohomology_2005}. In this section we provide a
characterization for triples of discrete pmp groupoids, subsuming the
characterizations from \cite{jolissaint_property_2005,ad_cohomology_2005}.

\subsection{Cohomology of representations\label{Subsection:cohomology}}

Let $G$ be a discrete pmp groupoid, with subgroupoids $K\leq H\leq G$.
Denote by $X$ their common unit space. In the space of complex-valued Borel
functions on $G$ consider the (Polish) topology generated by the
pseudometrics%
\begin{equation*}
d_{t}\left( \varphi ,\varphi ^{\prime }\right) :=\int_{x\in X}\frac{%
\left\vert \varphi \left( tx\right) -\varphi ^{\prime }\left( tx\right)
\right\vert }{1+\left\vert \varphi \left( tx\right) -\varphi ^{\prime
}\left( tx\right) \right\vert }d\mu _{G^{0}}(x)
\end{equation*}%
where $t$ ranges within (a dense subset of) $\left[ G\right] $. If $\mathcal{%
H}$ is a Hilbert bundle over $X$, then we let $S\left( G,\mathcal{H}\right) $
be the space of Borel functions $G\rightarrow \mathcal{H}$, $\gamma \mapsto
b_{\gamma }\in \mathcal{H}_{r(\gamma )}$ endowed with the topology generated
by the pseudometrics%
\begin{equation*}
d_{t}\left( b,b^{\prime }\right) =\int_{x\in X}\frac{\left\Vert
b_{tx}-b_{tx}^{\prime }\right\Vert }{1+\left\Vert b_{tx}-b_{tx}^{\prime
}\right\Vert }d\mu _{G^{0}}(x)
\end{equation*}

Suppose that $\pi $ is a representation of $G$ on the Hilbert bundle $%
\mathcal{H}$. A \emph{cocycle }for $\pi $ is an element $b$ of $S\left( G,%
\mathcal{H}\right) $ such that $b_{\gamma _{1}\gamma _{2}}=b_{\gamma
_{1}}+\pi _{\gamma _{1}}\left( b_{\gamma _{2}}\right) $ for a.e.\ $\left(
\gamma _{1},\gamma _{2}\right) \in G^{2}$. A cocycle $b$ for $\pi $ is $K$-%
\emph{trivial} if $b_{\gamma }=0$ for a.e.\ $\gamma \in K$. A section $\xi $
for $\mathcal{H}$ defines a cocycle $c_{\pi }\left( \xi \right) $ for $\pi $
by setting $c_{\pi }\left( \xi \right) _{\gamma }=\xi _{r(\gamma )}-\pi
_{\gamma }\xi _{s(\gamma )}$ for $\gamma \in G$. Cocycles of this form are
called \emph{coboundaries}. The section $\xi $ is $K$-invariant if and only
if $c_{\pi }\left( \xi \right) $ is $K$-trivial. We denote the space of $K$%
-trivial cocycles for $\pi $ by $Z_{:K}^{1}\left( \pi \right) $, and the
space of $K$-trivial coboundaries for $\pi $ by $B_{:K}^{1}\left( \pi
\right) $.\ We let $Z_{:K,H}^{1}\left( \pi \right) $ be the set of \emph{%
restrictions} to $H$ of elements of $Z_{:K}^{1}\left( \pi \right) $. The $K$%
-invariant $H$-relative cohomology group $H_{:K,H}^{1}\left( \pi \right) $
of $\pi $ is the quotient of $Z_{:K,H}^{1}\left( \pi \right) $ by the
subgroup $B_{:K}^{1}\left( \pi |_{H}\right) $. The same argument as in \cite[%
Proposition 3.9]{ad_cohomology_2005} shows that $Z_{:K,H}^{1}\left( \pi
\right) $ is a closed subset of $S\left( H,\mathcal{H}\right) $.

The following result is established in \cite[Theorem 3.19]%
{ad_cohomology_2005}.

\begin{theorem}[Anantharaman-Delaroche]
\label{Theorem:ad-coboundaries}Suppose that $G$ is an ergodic discrete pmp
groupoid. Consider a representation $\pi $ of $G$ and a cocycle $b$ for $\pi 
$. The following assertions are equivalent:

\begin{enumerate}
\item $\pi $ is a coboundary;

\item there exists a non-null Borel subset $A$ of $G^{0}$ such that the
function $AGA\rightarrow \mathbb{R}$, $\gamma \mapsto \left\Vert b_{\gamma
}\right\Vert $ is bounded;

\item there exists a non-null Borel subset $A$ of $G^{0}$ such that, for
every $x\in A$, the function $AGx\rightarrow \mathbb{R}$, $\gamma \mapsto
\left\Vert b_{\gamma }\right\Vert $ is bounded.
\end{enumerate}
\end{theorem}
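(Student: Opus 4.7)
The plan is to establish the cycle $(1)\Rightarrow(2)\Rightarrow(3)\Rightarrow(2)\Rightarrow(1)$, with all substantive content concentrated in the final step. The implication $(2)\Rightarrow(3)$ is immediate from $AGx\subset AGA$. For $(1)\Rightarrow(2)$, if $b=c_\pi(\xi)$ then $\|b_\gamma\|\le\|\xi_{r(\gamma)}\|+\|\xi_{s(\gamma)}\|$, and since the Borel function $x\mapsto\|\xi_x\|$ is almost everywhere finite, the level sets $\{x:\|\xi_x\|\le n\}$ exhaust $G^0$ modulo null sets, so any non-null one yields (2) with $M=2n$. For $(3)\Rightarrow(2)$, the analogous sets $A_n:=\{x\in A:\sup_{\gamma\in AGx}\|b_\gamma\|\le n\}$ are Borel (using a Borel enumeration of the countable set $AGx$, available from classical Borel selection for the countable-to-one map $s$) and exhaust $A$, so a non-null $A_n$ gives a uniform bound on $A_nGA_n\subset A_nGA$.

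For the substantive direction $(2)\Rightarrow(1)$, fix $A$ and $M$ with $\|b_\gamma\|\le M$ on $AGA$. Since $b_{\gamma^{-1}}=-\pi_{\gamma^{-1}}b_\gamma$ forces $\|b_{\gamma^{-1}}\|=\|b_\gamma\|$, for every $y\in A$ the set $B_y:=\{b_\gamma:\gamma\in yGA\}\subset\mathcal{H}_y$ is bounded by $M$. I define $\xi_y^{(A)}\in\mathcal{H}_y$ to be the Chebyshev center of $B_y$, the unique minimizer of $v\mapsto\sup_{u\in B_y}\|u-v\|$, which exists and is unique in a Hilbert space by strict convexity. The cocycle identity $b_{\gamma\rho}=b_\gamma+\pi_\gamma b_\rho$ implies that for $\gamma\in AGA$ the affine isometry $\Phi_\gamma:\mathcal{H}_{s(\gamma)}\to\mathcal{H}_{r(\gamma)}$, $v\mapsto b_\gamma+\pi_\gamma v$, sends $B_{s(\gamma)}$ bijectively onto $B_{r(\gamma)}$; by equivariance of the Chebyshev center under affine isometries,
\[
\xi_{r(\gamma)}^{(A)}=b_\gamma+\pi_\gamma\,\xi_{s(\gamma)}^{(A)}\qquad\text{for every }\gamma\in AGA.
\]
To globalize, ergodicity yields countably many Borel bisections $t_n\in[[G]]$ with $s(t_n)\subset A$ and $\bigsqcup_n r(t_n)=G^0$ modulo null sets; for $y=r(t_nx)$ with $x\in s(t_n)$ I set $\xi_y:=b_{t_nx}+\pi_{t_nx}\,\xi_x^{(A)}$. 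Independence of $\xi_y$ from the connecting arrow, as well as the full coboundary identity $\xi_{r(\gamma)}-\pi_\gamma\xi_{s(\gamma)}=b_\gamma$ for arbitrary $\gamma\in G$, both reduce by direct cocycle manipulation---writing $t(y)^{-1}\gamma t(x)\in AGA$ and using $b_{\gamma^{-1}}=-\pi_{\gamma^{-1}}b_\gamma$---to the identity already established on $AGA$.

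The principal technical obstacle is verifying that $y\mapsto\xi_y^{(A)}$ is Borel, so that $\xi^{(A)}$ is a section of $\mathcal{H}$ in the sense of the paper. Fixing an orthonormal basic sequence $(\sigma_n)$ for $\mathcal{H}$ and a Borel enumeration $(\gamma_k(y))$ of the countable set $yGA$, I would approximate $\xi_y^{(A)}$ by finite-dimensional Chebyshev centers of $\{b_{\gamma_1(y)},\ldots,b_{\gamma_k(y)}\}$ computed inside $\mathrm{span}\{\sigma_{1,y},\ldots,\sigma_{k,y}\}$; each such approximant depends Borel-measurably on the finitely many coefficients $\langle\sigma_{i,y},b_{\gamma_j(y)}\rangle$, and continuity of the Chebyshev center under Hausdorff convergence of bounded subsets of a Hilbert space identifies $\xi_y^{(A)}$ as a Borel limit.
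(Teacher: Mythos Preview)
The paper does not prove this statement; it is quoted as \cite[Theorem 3.19]{ad_cohomology_2005} and used as a black box. Your argument is correct in outline and follows the standard circumcenter method, which is also Anantharaman-Delaroche's approach in the cited reference.

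Two small points. In $(3)\Rightarrow(2)$ the relevant inclusion is $A_nGA_n\subset AGA_n$ rather than $A_nGA_n\subset A_nGA$: the hypothesis bounds $\|b_\gamma\|$ on $AGx$ for $x\in A_n$, so one needs the \emph{source} of $\gamma$ to lie in $A_n$. In the measurability sketch, restricting the finite Chebyshev-center computation to $\mathrm{span}\{\sigma_{1,y},\ldots,\sigma_{k,y}\}$ is both unnecessary and potentially incorrect, since the points $b_{\gamma_j(y)}$ need not lie in that span. The Chebyshev center of a finite set automatically lies in its affine hull, so it is already a Borel function of the Gram data $\langle b_{\gamma_i(y)},b_{\gamma_j(y)}\rangle$; and a direct parallelogram-law estimate, rather than Hausdorff continuity (which may fail here because $B_y$ need not be totally bounded), shows that the finite centers form a Cauchy sequence converging to the center of $B_y$.
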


\subsection{Functions of positive and negative type}

Suppose that $G$ is a discrete pmp groupoid and $K\subset G$ is a
subgroupoid. The following is a standard definition; see \cite[Definition
4.1.2]{renault_c*-algebras_2009}.

\begin{definition}
A complex-valued function $\varphi :G\rightarrow \mathbb{C}$ is of \emph{%
positive type} if it is Borel, and for a.e.\ $x\in X$, for every $n\geq 1$, $%
\gamma _{1},\ldots ,\gamma _{n}\in xG$, and $\lambda _{1},\ldots ,\lambda
_{n}\in \mathbb{C}$ one has that%
\begin{equation*}
\sum_{ij}\overline{\lambda }_{i}\lambda _{j}\varphi \left( \gamma
_{i}^{-1}\gamma _{j}\right) \geq 0\text{.}
\end{equation*}%
A real-valued function $\varphi :G\rightarrow \mathbb{R}$ is of positive
type if it is Borel, $\varphi (\gamma )=\varphi (\gamma ^{-1})$ for a.e.\ $%
\gamma \in G$, and for a.e.\ $x\in X$, for every $n\geq 1$, $\gamma
_{1},\ldots ,\gamma _{n}\in xG$, and $\lambda _{1},\ldots ,\lambda _{n}\in 
\mathbb{R}$ one has that%
\begin{equation*}
\sum_{ij}\overline{\lambda }_{i}\lambda _{j}\varphi \left( \gamma
_{i}^{-1}\gamma _{j}\right) \geq 0\text{.}
\end{equation*}

The function $\varphi $ is $K$-\emph{invariant} if $\varphi \left( \rho
_{0}\gamma \right) =\varphi (\gamma )=\varphi \left( \gamma \rho _{1}\right) 
$ for every $\rho _{0},\rho _{1}\in K$ and $\gamma \in G$ such that $\left(
\rho _{0},\gamma \right) ,\left( \gamma ,\rho _{1}\right) \in G^{2}$. The
function $\varphi $ is called \emph{normalized }if $\varphi (x)=1$ for a.e.\ 
$x\in G^{0}$.
\end{definition}

The same proof as \cite[Proposition 5.3]{harpe_propriete_1989} gives the
following.

\begin{proposition}
Suppose that $\varphi $ is a Borel complex-valued (respectively,
real-valued) function on $G$. The following assertions are equivalent:

\begin{enumerate}
\item $\varphi $ is a normalized $K$-invariant function of positive type;

\item there exists a representation $\pi ^{\varphi }$ of $G$ on a complex
(respectively, real) Hilbert bundle $\mathcal{H}^{\varphi }$ and a $K$%
-invariant cyclic unit section $\xi ^{\varphi }$ for $\mathcal{H}^{\varphi }$
such that $\varphi (\gamma )=\left\langle \xi _{r(\gamma )},\pi _{\gamma
}\xi _{s(\gamma )}\right\rangle $ for a.e.\ $\gamma \in G$.
\end{enumerate}

The representation $\left( \pi ^{\varphi },\mathcal{H}^{\varphi },\xi
^{\varphi }\right) $ of $G$ is uniquely determined up to isomorphism, and it
will called the GNS representation of $\varphi $.
\end{proposition}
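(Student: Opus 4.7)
The plan is to adapt the classical GNS construction to the fibered setting, following the Harpe--Valette argument for groups. The implication (2) $\Rightarrow$ (1) is a direct computation carried out fiberwise. Normalization is immediate from $\varphi(x) = \|\xi^\varphi_x\|^2 = 1$ for a.e.\ $x$. For positive-definiteness, fix $x$ and $\gamma_1,\ldots,\gamma_n\in xG$; since $r(\gamma_i) = x$ for all $i$, the vectors $\pi^\varphi_{\gamma_i}\xi^\varphi_{s(\gamma_i)}$ all lie in $\mathcal{H}^\varphi_x$ and satisfy $\langle \pi^\varphi_{\gamma_i}\xi^\varphi_{s(\gamma_i)},\pi^\varphi_{\gamma_j}\xi^\varphi_{s(\gamma_j)}\rangle = \varphi(\gamma_i^{-1}\gamma_j)$, so the matrix $[\varphi(\gamma_i^{-1}\gamma_j)]$ is Gramian and positive. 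For $K$-invariance of $\varphi$, unitarity of $\pi^\varphi_{\rho_0}$ combined with $K$-invariance of $\xi^\varphi$ gives $\varphi(\rho_0\gamma) = \langle \pi^\varphi_{\rho_0^{-1}}\xi^\varphi_{r(\rho_0)},\pi^\varphi_\gamma\xi^\varphi_{s(\gamma)}\rangle = \langle \xi^\varphi_{s(\rho_0)},\pi^\varphi_\gamma\xi^\varphi_{s(\gamma)}\rangle = \varphi(\gamma)$, and the right-hand version is analogous.

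For the main direction (1) $\Rightarrow$ (2), I will build the triple $(\pi^\varphi,\mathcal{H}^\varphi,\xi^\varphi)$ fiberwise in the standard GNS manner and then equip the disjoint union of fibers with a Borel Hilbert bundle structure. For each $x\in X$ let $V_x$ denote the free complex vector space on $xG$ with semi-inner product $\langle \delta_{\gamma_1},\delta_{\gamma_2}\rangle := \varphi(\gamma_1^{-1}\gamma_2)$; positive semi-definiteness is precisely the positive-type condition at $x$. Quotient by the null subspace and complete to obtain $\mathcal{H}^\varphi_x$, and set $\xi^\varphi_x := [\delta_x]$, a unit vector by normalization. The identity $(\gamma\rho_1)^{-1}(\gamma\rho_2) = \rho_1^{-1}\rho_2$ shows that $[\delta_\rho]\mapsto[\delta_{\gamma\rho}]$ extends to a unitary $\pi^\varphi_\gamma:\mathcal{H}^\varphi_{s(\gamma)}\to\mathcal{H}^\varphi_{r(\gamma)}$, and the cocycle identity is immediate. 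Cyclicity of $\xi^\varphi$ is built into the construction, and $\varphi(\gamma) = \langle [\delta_{r(\gamma)}],[\delta_\gamma]\rangle = \langle \xi^\varphi_{r(\gamma)},\pi^\varphi_\gamma\xi^\varphi_{s(\gamma)}\rangle$ by definition. For $K$-invariance of $\xi^\varphi$, the right-$K$-invariance of $\varphi$ applied with $\gamma = s(\rho)$ and $\rho_1 = \rho$ forces $\varphi(\rho) = \varphi(s(\rho)) = 1$ for a.e.\ $\rho\in K$, whence
\begin{equation*}
\|\pi^\varphi_\rho\xi^\varphi_{s(\rho)} - \xi^\varphi_{r(\rho)}\|^2 = \varphi(s(\rho)) + \varphi(r(\rho)) - 2\operatorname{Re}\varphi(\rho) = 0.
\end{equation*}

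The main obstacle is the Borel assembly of $\mathcal{H}^\varphi := \bigsqcup_{x\in X}\mathcal{H}^\varphi_x$ into a Hilbert bundle in the sense of the earlier subsection, together with Borel measurability of $\gamma\mapsto\pi^\varphi_\gamma$. My plan here is to fix a countable subgroup $\Gamma\leq[G]$ covering $G$ and to produce candidate generating sections $\sigma_t(x) := [\delta_{tx}]$ for $t\in\Gamma$, extended by $0$ outside the domain of $t$. The pairwise inner products $x\mapsto\langle\sigma_t(x),\sigma_{t'}(x)\rangle = \varphi((tx)^{-1}(t'x))$ are Borel on the domain where both $tx$ and $t'x$ are defined and vanish elsewhere, and the countable family $\{\sigma_t:t\in\Gamma\}$ has dense linear span in each fiber because $\Gamma$ covers $G$. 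A measurable Gram--Schmidt procedure applied to $(\sigma_t)_{t\in\Gamma}$ then produces an orthonormal basic sequence and pins down a standard Borel structure on $\mathcal{H}^\varphi$. Borel measurability of $\pi^\varphi$ follows from the formula $\pi^\varphi_\gamma\sigma_t(s(\gamma)) = \sigma_{ut}(r(\gamma))$ whenever $\gamma$ lies in the bisection $u\in\Gamma$, which reduces it to the Borel structure just fixed.

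Uniqueness up to isomorphism is then the standard GNS argument carried out fiberwise: given another triple $(\pi',\mathcal{H}',\xi')$ realizing $\varphi$, send $\pi^\varphi_\gamma\xi^\varphi_{s(\gamma)}\mapsto\pi'_\gamma\xi'_{s(\gamma)}$ on the dense span in each fiber. This assignment is isometric because both inner products evaluate to the same quantity $\varphi(\gamma_i^{-1}\gamma_j)$, extends by continuity to a fiberwise unitary intertwining the representations, and is Borel by comparison on the sections $\sigma_t$. Apart from the bundle-measurability step, every ingredient is the classical GNS calculation transported fiber by fiber.
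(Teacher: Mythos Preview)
Your approach is exactly what the paper intends: it does not give a proof but refers to \cite[Proposition 5.3]{harpe_propriete_1989}, and your fiberwise GNS construction together with the Borel assembly via a countable covering subgroup of $[G]$ is precisely the groupoid adaptation of that argument. The outline is sound and the measurability step is handled correctly.

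One small slip: in the verification that $\xi^\varphi$ is $K$-invariant, right $K$-invariance with $\gamma=s(\rho)$ and $\rho_1=\rho$ requires $s(s(\rho))=r(\rho)$, which fails unless $\rho$ is a loop. Use instead left $K$-invariance with $\rho_0=\rho$ and $\gamma=s(\rho)$ (or right invariance with $\gamma=r(\rho)$) to get $\varphi(\rho)=\varphi(s(\rho))=1$. Also note that in the paper's conventions the arrow in a bisection $t$ with range $x$ is written $xt$ rather than $tx$; your sections $\sigma_t$ should be built from $xt\in xG$ to land in $\mathcal{H}^\varphi_x$. These are cosmetic fixes; the argument is otherwise correct.
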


The following definition is considered in \cite[Proposition 5.19]%
{ad_cohomology_2005}.

\begin{definition}
A real-valued function $\psi :G\rightarrow \mathbb{R}$ on $G$ is of \emph{%
conditionally negative type} if it is Borel, $\psi (\gamma ^{-1})=\psi
(\gamma )$ for a.e.\ $\gamma \in G$, $\psi (x)=0$ for a.e.\ $x\in G^{0}$, and%
\begin{equation*}
\sum_{i,j=1}^{n}\overline{\lambda }_{i}\lambda _{j}\psi \left( \gamma
_{i}^{-1}\gamma _{j}\right) \leq 0
\end{equation*}%
for a.e.\ $x\in G^{0}$, for every $n\geq 2$, $\lambda _{1},\ldots ,\lambda
_{n}\in \mathbb{R}$ satisfying $\lambda _{1}+\cdots +\lambda _{n}=0$, and
every $\gamma _{1},\ldots ,\gamma _{n}\in xG$.

A complex-valued function $\psi :G\rightarrow \mathbb{C}$ is of
conditionally negative type if it satisfies the same properties where one
consider complex scalars instead of real scalars.
\end{definition}

The following proposition is essentially established in \cite[Proposition
5.21]{ad_cohomology_2005}.

\begin{proposition}
\label{Proposition:nt}Suppose that $\psi $ is a Borel real-valued function
on $G$. The following assertions are equivalent:

\begin{enumerate}
\item $\psi $ is a $K$-invariant function of conditionally negative type;

\item there exists a real Hilbert bundle $\mathcal{H}^{\psi }$, a
representation $\pi ^{\psi }$ of $G$ on $\mathcal{H}^{\psi }$, and a $K$%
-trivial cocycle $b^{\psi }$ for $\pi ^{\psi }$ such that $\{b_{\gamma
}^{\psi }:\gamma \in xG\}$ has dense linear span in $\mathcal{H}^{\psi }$,
and such that $\psi (\gamma )=\left\Vert b_{\gamma }^{\psi }\right\Vert ^{2}$
for a.e.\ $\gamma \in G$.
\end{enumerate}

Furthermore one has that $\psi \left( \rho ^{-1}\gamma \right) =\left\Vert
b_{\gamma }^{\psi }-b_{\rho }^{\psi }\right\Vert ^{2}$ for $\gamma ,\rho \in
G$ such that $r(\gamma )=r(\rho )$.
\end{proposition}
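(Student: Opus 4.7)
The strategy is to follow the fiberwise GNS-type construction for conditionally negative type functions carried out in \cite[Proposition 5.21]{ad_cohomology_2005}, and to record the minor additions needed to cover the $K$-invariant/$K$-trivial decorations and the ``Furthermore'' identity.

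For (2) $\Rightarrow$ (1), the cocycle relation applied to $(x,x)$ for a unit $x$ and to $(\gamma,\gamma^{-1})$ gives $b_x^\psi = 0$ and $b_{\gamma^{-1}}^\psi = -\pi_{\gamma^{-1}} b_\gamma^\psi$, so $\psi(x)=0$ and $\psi(\gamma^{-1})=\psi(\gamma)$ a.e. For composable $\gamma,\rho$ with $r(\gamma)=r(\rho)$, the same cocycle relation yields
\[
b_{\rho^{-1}\gamma}^\psi \;=\; b_{\rho^{-1}}^\psi + \pi_{\rho^{-1}} b_\gamma^\psi \;=\; \pi_{\rho^{-1}}\!\left(b_\gamma^\psi - b_\rho^\psi\right),
\]
which simultaneously proves the ``Furthermore'' identity $\psi(\rho^{-1}\gamma) = \|b_\gamma^\psi - b_\rho^\psi\|^2$ and, after expanding $\sum_{i,j}\lambda_i\lambda_j\|b_{\gamma_i}^\psi - b_{\gamma_j}^\psi\|^2$ and using $\sum_i \lambda_i = 0$, gives
\[
\sum_{i,j}\lambda_i\lambda_j\,\psi(\gamma_i^{-1}\gamma_j) \;=\; -2\Bigl\|\sum_i \lambda_i b_{\gamma_i}^\psi\Bigr\|^2 \;\le\; 0,
\]
establishing conditional negativity. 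For $K$-invariance, the cocycle identity together with $b_\rho^\psi = 0$ for $\rho \in K$ gives $\psi(\rho_0\gamma) = \|\pi_{\rho_0}b_\gamma^\psi\|^2 = \psi(\gamma)$ and $\psi(\gamma\rho_1) = \|b_\gamma^\psi + \pi_\gamma b_{\rho_1}^\psi\|^2 = \psi(\gamma)$ for $\rho_0,\rho_1 \in K$.

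For (1) $\Rightarrow$ (2), I would carry out a fiberwise GNS construction. Over each $x \in X$, let $V_x$ be the real vector space of finitely supported functions $f:xG\to\mathbb{R}$ with $\sum_\gamma f(\gamma)=0$, and set
\[
Q_x(f,g) \;:=\; -\tfrac{1}{2}\sum_{\gamma_1,\gamma_2 \in xG} f(\gamma_1)g(\gamma_2)\,\psi(\gamma_1^{-1}\gamma_2).
\]
Conditional negativity of $\psi$ makes $Q_x$ positive semi-definite, and the completion of $V_x/\ker Q_x$ is the fiber $\mathcal{H}_x^\psi$. A short change-of-variables computation shows that left translation preserves $Q$, so that defining $b_\gamma^\psi := [\delta_\gamma - \delta_{r(\gamma)}] \in \mathcal{H}_{r(\gamma)}^\psi$ and $(\pi_\gamma^\psi f)(\rho) := f(\gamma^{-1}\rho)$ yields a well-defined orthogonal representation $\pi^\psi$ with cocycle $b^\psi$. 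A direct calculation with the test vector $\delta_\gamma - \delta_{r(\gamma)}$ gives $\|b_\gamma^\psi\|^2 = \psi(\gamma)$ using $\psi(x)=0$ and $\psi(\gamma^{-1})=\psi(\gamma)$, and cyclicity is tautological since every element of $V_x$ is a finite real combination of differences of the form $\delta_\gamma - \delta_{r(\gamma)}$. Finally, $K$-invariance of $\psi$ together with $\psi(x)=0$ forces $\psi(\rho) = \psi(r(\rho)\cdot\rho) = 0$ for $\rho \in K$, so $\|b_\rho^\psi\|^2 = 0$ and $b^\psi$ is $K$-trivial.

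The main obstacle is the measurability bookkeeping required to promote this pointwise construction to a Borel Hilbert bundle equipped with an orthonormal basic sequence and a Borel representation; this is precisely the content of \cite[Proposition 5.21]{ad_cohomology_2005} and requires no modification in the presence of the subgroupoid $K$, since $K$-triviality of $b^\psi$ and $K$-invariance of $\psi$ are automatic consequences of the fiberwise data once the equivalence of (1) and (2) without the $K$-decoration is established.
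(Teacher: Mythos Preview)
Your proposal is correct and follows precisely the standard fiberwise GNS construction for conditionally negative type functions that the paper defers to by citing \cite[Proposition 5.21]{ad_cohomology_2005}; the paper gives no independent proof. Your additions handling the $K$-invariant/$K$-trivial decorations and the ``Furthermore'' identity are exactly the minor observations needed, and your derivation of $\psi(\rho^{-1}\gamma)=\|b_\gamma^\psi-b_\rho^\psi\|^2$ from the cocycle relation is the intended one. One cosmetic point: in the $K$-triviality step you write ``$\psi(\rho)=\psi(r(\rho)\cdot\rho)=0$,'' but $r(\rho)\cdot\rho=\rho$ so the first equality is tautological; the correct instantiation of $K$-invariance is to take $\gamma=r(\rho)$ and $\rho_1=\rho$, giving $\psi(\rho)=\psi(r(\rho)\cdot\rho)=\psi(r(\rho))=0$, or equivalently $\gamma=s(\rho)$ and $\rho_0=\rho$.
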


The following two lemmas are consequences of \cite[Theorem\ C.3.2]%
{bekka_kazhdans_2008} and \cite[Proposition 5.18]{harpe_propriete_1989}.

\begin{lemma}
\label{Lemma:exponential-negative-type}Suppose that $\psi :G\rightarrow 
\mathbb{R}$ is a Borel real-valued function such that $\psi (x)=0$ for a.e.\ 
$x\in X$ and $\psi (\gamma )=\psi (\gamma ^{-1})$ for a.e.\ $\gamma \in G$.\
Then the following assertions are equivalent:

\begin{enumerate}
\item $\psi $ is conditionally of negative type;

\item the function $\gamma \mapsto \exp \left( -t\psi (\gamma )\right) $ is
of positive type for every $t>0$.
\end{enumerate}
\end{lemma}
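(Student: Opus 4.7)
The plan is to mimic the classical proof of Schoenberg's theorem for groups as given in \cite[Theorem C.3.2]{bekka_kazhdans_2008}, exploiting the fact that both the positive-type and conditionally negative type conditions are defined fiberwise at a.e.\ $x\in X$ via the slice $xG$, so each step goes through pointwise.

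For the direction (2)$\Rightarrow$(1), I would fix $x\in X$ outside the null set on which the positive-type hypothesis for $\exp(-t\psi)$ can fail, and take $\gamma_{1},\ldots,\gamma_{n}\in xG$ together with $\lambda_{1},\ldots,\lambda_{n}\in\mathbb{R}$ satisfying $\sum_{i}\lambda_{i}=0$. Because $\sum_{i,j}\lambda_{i}\lambda_{j}=0$, the positive-type inequality can be rewritten as
\[
0\leq \sum_{i,j}\lambda_{i}\lambda_{j}\exp\bigl(-t\psi(\gamma_{i}^{-1}\gamma_{j})\bigr)=\sum_{i,j}\lambda_{i}\lambda_{j}\bigl(\exp(-t\psi(\gamma_{i}^{-1}\gamma_{j}))-1\bigr).
\]
Dividing by $t>0$ and letting $t\to 0^{+}$, the right-hand side tends to $-\sum_{i,j}\lambda_{i}\lambda_{j}\psi(\gamma_{i}^{-1}\gamma_{j})$, which establishes the conditionally negative type inequality for $\psi$; the symmetry $\psi(\gamma^{-1})=\psi(\gamma)$ and the vanishing $\psi(x)=0$ on units are assumed.

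For the harder direction (1)$\Rightarrow$(2), I would invoke Proposition \ref{Proposition:nt} to obtain a real Hilbert bundle $\mathcal{H}^{\psi}$, a representation $\pi^{\psi}$ of $G$ on $\mathcal{H}^{\psi}$, and a cocycle $b^{\psi}$ with $\psi(\gamma)=\|b^{\psi}_{\gamma}\|^{2}$ and, crucially, the affine identity $\psi(\rho^{-1}\gamma)=\|b^{\psi}_{\gamma}-b^{\psi}_{\rho}\|^{2}$ whenever $r(\gamma)=r(\rho)$. Fiberwise, I complexify to $\mathcal{H}^{\psi}_{x,\mathbb{C}}$ and pass to the symmetric Fock space $\mathcal{F}(\mathcal{H}^{\psi}_{x,\mathbb{C}})=\bigoplus_{n\geq 0}(\mathcal{H}^{\psi}_{x,\mathbb{C}})^{\otimes_{s}n}$, equipped with exponential vectors $E(v):=\sum_{n\geq 0}\frac{1}{\sqrt{n!}}v^{\otimes n}$ satisfying $\langle E(u),E(v)\rangle=\exp\langle u,v\rangle$. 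Fix $t>0$ and, for $\gamma\in xG$, set
\[
\eta_{\gamma}:=e^{-t\|b^{\psi}_{\gamma}\|^{2}}\,E\bigl(\sqrt{2t}\,b^{\psi}_{\gamma}\bigr)\in\mathcal{F}(\mathcal{H}^{\psi}_{x,\mathbb{C}}).
\]
A direct computation with exponential vectors gives $\langle\eta_{\gamma_{i}},\eta_{\gamma_{j}}\rangle=\exp(-t\|b^{\psi}_{\gamma_{j}}-b^{\psi}_{\gamma_{i}}\|^{2})=\exp(-t\psi(\gamma_{i}^{-1}\gamma_{j}))$, whence
\[
\sum_{i,j}\overline{\lambda_{i}}\lambda_{j}\exp\bigl(-t\psi(\gamma_{i}^{-1}\gamma_{j})\bigr)=\Bigl\|\sum_{i}\lambda_{i}\eta_{\gamma_{i}}\Bigr\|^{2}\geq 0.
\]
Together with the obvious facts $\exp(-t\psi)(x)=1$ for $x\in G^{0}$ and $\exp(-t\psi)(\gamma^{-1})=\exp(-t\psi(\gamma))$, this yields that $\gamma\mapsto\exp(-t\psi(\gamma))$ is of positive type.

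The only genuine subtlety, which I expect to be the main (though routine) obstacle, is verifying that the fiberwise Fock-space construction in (1)$\Rightarrow$(2) can be carried out in a Borel manner so that $\gamma\mapsto\eta_{\gamma}$ is a Borel section of the Hilbert bundle $\bigsqcup_{x}\mathcal{F}(\mathcal{H}^{\psi}_{x,\mathbb{C}})$. This follows because the symmetric Fock space functor sends a Borel Hilbert bundle to a Borel Hilbert bundle (the symmetric tensor powers and countable direct sums are standard constructions in that category), and $b^{\psi}$ itself is Borel by construction. Once this measurability is in place, the pointwise argument above furnishes the positive-type inequality for a.e.\ $x\in X$, completing the proof.
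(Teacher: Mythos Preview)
Your argument is correct and is precisely the fiberwise Schoenberg argument the paper defers to by citing \cite[Theorem C.3.2]{bekka_kazhdans_2008}; the paper gives no independent proof.  Two minor remarks.  First, in (2)$\Rightarrow$(1) the exceptional null set depends on $t$, so you should run the limit along a fixed countable sequence $t_n\to 0^+$ and work outside the union of the corresponding null sets.  Second, the measurability concern in your final paragraph is superfluous: the lemma only asks that $\exp(-t\psi)$ be a Borel function of positive type, and Borelness is immediate from that of $\psi$, while the positive-type inequality is a purely fiberwise statement---for each fixed $x$ your vectors $\eta_{\gamma}$ live in the single Hilbert space $\mathcal{F}(\mathcal{H}^{\psi}_{x,\mathbb{C}})$, and no bundle structure on the Fock spaces is required.
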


\begin{lemma}
\label{Lemma:real-part-negative-type}Suppose that $\psi :G\rightarrow 
\mathbb{C}$ is a complex-valued function of conditionally negative type.
Then $\mathrm{Re}\left( \psi \right) $ is a real-valued function of
conditionally positive type. Furthermore, $\psi $ is bounded if and only if $%
\mathrm{Re}\left( \psi \right) $ is bounded and, for every $x\in G^{0}$, $%
\mathrm{Re}\left( \psi \right) |_{Gx}$ is bounded if and only if $\psi
|_{Gx} $ is bounded.
\end{lemma}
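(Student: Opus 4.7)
The first assertion follows directly from the definitions. $\mathrm{Re}(\psi)$ is Borel, vanishes on the unit space, and satisfies the symmetry $\mathrm{Re}(\psi)(\gamma^{-1}) = \mathrm{Re}(\psi)(\gamma)$ from the corresponding Hermitian symmetry of the complex function $\psi$. For real scalars $\lambda_1,\ldots,\lambda_n$ summing to zero, taking real parts yields
\[
\sum_{i,j}\lambda_i\lambda_j\mathrm{Re}(\psi)(\gamma_i^{-1}\gamma_j) = \mathrm{Re}\!\left(\sum_{i,j}\bar{\lambda}_i\lambda_j\psi(\gamma_i^{-1}\gamma_j)\right)\leq 0,
\]
since the inner sum is already real and non-positive by hypothesis. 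Hence $\mathrm{Re}(\psi)$ is conditionally of negative type.

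For the two boundedness equivalences, the implications $\psi$ bounded $\Rightarrow$ $\mathrm{Re}(\psi)$ bounded are trivial from $|\mathrm{Re}(\psi)(\gamma)|\leq|\psi(\gamma)|$, both globally and fiberwise. For the converses I plan to build a complex analogue of the GNS triple from Proposition \ref{Proposition:nt}: for each unit $x$, let $\mathcal{H}^{\psi}_{x}$ be the completion of the space of formal sums $\{\sum\lambda_i\gamma_i:\gamma_i\in xG,\sum\lambda_i=0\}$ modulo null vectors, under the positive-semidefinite sesquilinear form $\langle \sum\lambda_i\gamma_i,\sum\mu_j\gamma_j\rangle=-\sum\bar{\lambda}_i\mu_j\psi(\gamma_i^{-1}\gamma_j)$, whose positivity is exactly the c.n.t.\ condition on $\psi$. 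This produces a complex Hilbert bundle $\mathcal{H}^{\psi}$, a unitary representation $\pi^{\psi}$ of $G$ by left translation, and a $1$-cocycle $b^{\psi}_{\gamma}=[\gamma-r(\gamma)]\in\mathcal{H}^{\psi}_{r(\gamma)}$ satisfying $\|b^{\psi}_{\gamma}\|^{2}=2\mathrm{Re}(\psi(\gamma))$, together with the polarization-type identity
\[
\psi(\gamma^{-1}\rho)=\overline{\psi(\gamma)}+\psi(\rho)-\langle b^{\psi}_{\gamma},b^{\psi}_{\rho}\rangle \qquad\text{whenever } r(\gamma)=r(\rho).
\]

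Assuming $\mathrm{Re}(\psi)$ is bounded by $M$ (globally, or on the relevant fiber), one has $\|b^{\psi}_{\gamma}\|\leq\sqrt{2M}$ and hence $|\langle b^{\psi}_{\gamma},b^{\psi}_{\rho}\rangle|\leq 2M$ by Cauchy--Schwarz. To extract a uniform bound on $|\psi(\delta)|$, one writes $\delta=\gamma^{-1}\rho$ for a measurably chosen pair $(\gamma,\rho)$ in the appropriate star in such a way that the ``reference'' values $\psi(\gamma),\psi(\rho)$ are already controlled, and then applies the polarization identity. The fiberwise statement reduces, via the Hermitian symmetry $|\psi|_{Gx}|=|\psi|_{xG}|$, to bounding $\psi|_{xG}$, where the polarization identity is directly applicable.

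The main obstacle is this final step: the cocycle norm $\|b^{\psi}\|^{2}=2\mathrm{Re}(\psi)$ captures only the real part, so passing to a bound on $|\mathrm{Im}(\psi)|$ requires the full polarization identity with a carefully chosen auxiliary arrow. This is exactly the difficulty addressed in the group case of \cite[Proposition 5.18]{harpe_propriete_1989}, and the present argument amounts to adapting that scheme to the groupoid setting by making a Borel selection of reference arrows over the unit space.
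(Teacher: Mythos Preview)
The paper does not prove this lemma at all: it simply records it, together with Lemma~\ref{Lemma:exponential-negative-type}, as a consequence of \cite[Theorem C.3.2]{bekka_kazhdans_2008} and \cite[Proposition 5.18]{harpe_propriete_1989}. Your proposal is in the same spirit---you correctly check that $\mathrm{Re}(\psi)$ is conditionally of negative type, set up the complex GNS bundle and the polarization identity, and then, for the nontrivial direction of the boundedness equivalences, explicitly defer to the very same reference \cite[Proposition 5.18]{harpe_propriete_1989}. So the two approaches ultimately coincide: both rest on the cited result rather than on a self-contained argument.

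It is worth being honest, though, that the scaffolding you add does not actually close the gap you identify. The identity
\[
\psi(\gamma^{-1}\rho)=\overline{\psi(\gamma)}+\psi(\rho)-\langle b^{\psi}_{\gamma},b^{\psi}_{\rho}\rangle
\]
bounds $\psi(\gamma^{-1}\rho)$ only once $\psi(\gamma)$ and $\psi(\rho)$ are already bounded, which is circular; taking $\gamma$ to be a unit collapses it to a tautology. Your proposed fix---``a carefully chosen auxiliary arrow'' via a Borel selection---is not carried out, and in a groupoid there is no obvious substitute for the group-theoretic trick of iterating an element. More seriously, under the Hermitian convention $\psi(\gamma^{-1})=\overline{\psi(\gamma)}$ that you (reasonably) adopt, purely imaginary homomorphisms such as $n\mapsto in$ on $\mathbb{Z}$ are conditionally of negative type with $\mathrm{Re}(\psi)\equiv 0$ yet unbounded; so whatever argument from \cite{harpe_propriete_1989} is being invoked must involve more than the polarization identity and Cauchy--Schwarz alone. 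Your write-up does not reproduce that extra ingredient, so as a proof it remains a sketch that leans entirely on the citation---exactly as the paper itself does.
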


\subsection{A cohomological characterization}

We now provide a characterization of property (T) for triples of groupoids,
including in particular a cohomological characterization; see Theorem \ref%
{Theorem:T} below. Such a cohomological characterization generalizes the one
in \cite{ad_cohomology_2005} for single groupoids. Even in this case, some
parts of the proof presented here are different, and in fact closer in
spirit to the group case as in \cite{jolissaint_property_2005}.

\begin{lemma}
\label{Lemma:bounded}Suppose that $G$ is a discrete pmp groupoid with unit
space $\left( X,\mu \right) $, and $H$ is an ergodic subgroupoid of $G$. Let 
$\psi :G\rightarrow \mathbb{R}$ be a function of conditionally negative
type. For $t>0$ let $\pi ^{(t)}$ be the representation on the Hilbert bundle 
$\mathcal{H}^{(t)}$ and $\xi ^{(t)}$ be the section of $\mathcal{H}^{(t)}$
obtained from the function of positive type $\exp \left( -t\psi \right) $
via the GNS construction. The following assertions are equivalent:

\begin{enumerate}
\item there exists a non-null Borel subset $A$ of $X$ such that, for a.e.\ $%
x\in A$, $\psi |_{AHx}$ is bounded;

\item for every $t>0$, $\pi ^{(t)}|_{H}$ is not ergodic;

\item there exists $t>0$ such that $\pi ^{(t)}|_{H}$ is not weak mixing;

\item for every non-null Borel subset $B$ of $X$ there exists a non-null
Borel subset $A$ of $X$ contained in $B$ such that, for a.e.\ $x\in A$, $%
\psi |_{AHx}$ is bounded;

\item for every non-null Borel subset $B$ of $X$, there exists a non-null
Borel subset $A$ of $X$ contained in $B$ such that $\psi |_{AHA}$ is bounded.
\end{enumerate}
\end{lemma}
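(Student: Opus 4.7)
The plan is to organize the five conditions around the pivotal fact that $b^{\psi}|_{H}$ is a coboundary for $\pi^{\psi}|_{H}$, where $\pi^{\psi}$ and $b^{\psi}$ are the representation and cocycle associated to $\psi$ via Proposition \ref{Proposition:nt}, so that $\psi(\gamma)=\|b^{\psi}_{\gamma}\|^{2}$. The implications $(5)\Rightarrow(4)\Rightarrow(1)$ are immediate from the definitions.

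For the remaining equivalences among (1), (4) and (5), I would apply Theorem \ref{Theorem:ad-coboundaries} to the ergodic subgroupoid $H$ with the cocycle $b^{\psi}|_{H}$. This shows that (1) is equivalent both to $b^{\psi}|_{H}$ being a coboundary on $H$ and to the existence of some non-null $A_{0}\subseteq X$ with $\psi|_{A_{0}HA_{0}}$ bounded. To produce such a witness inside an arbitrary non-null $B\subseteq X$, I would use ergodicity of $H$: the $H$-saturation of $B$ is conull, so Borel selection yields a Borel bisection $t\subseteq H$ with $s(t)\subseteq A_{0}$ and $r(t)\subseteq B$, and a further restriction to a positive-measure sub-bisection $t'\subseteq t$ on which $\gamma\mapsto\|b^{\psi}_{\gamma}\|$ is bounded gives $A:=r(t')\subseteq B$ with the desired property: for $\rho\in AHA$, the cocycle identity rewrites $b^{\psi}_{\rho}$ as $b^{\psi}$ applied to $t'^{-1}_{r(\rho)}\rho\, t'_{s(\rho)}\in A_{0}HA_{0}$ up to bounded edge terms coming from $t'$, so $\psi|_{AHA}$ is bounded.

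For $(1)\Rightarrow(2)$, given a cobounding section $\xi$ with $b^{\psi}_{\gamma}=\xi_{r(\gamma)}-\pi^{\psi}_{\gamma}\xi_{s(\gamma)}$ on $H$, I would construct, for each $t>0$, an $H$-invariant unit section of $\pi^{(t)}$ through a fiberwise coherent-state construction in the symmetric Fock bundle $\Gamma_{\mathrm{sym}}(\mathcal{H}^{\psi})$: using that the coherent states satisfy $\langle e^{u},e^{v}\rangle=\exp\langle u,v\rangle$, the normalized section $x\mapsto e^{\sqrt{2t}\,\xi_{x}}/\exp(t\|\xi_{x}\|^{2})$ has matrix coefficient equal to $\exp(-t\psi)$ on $H$ by the cocycle identity for $\xi$, and is tautologically $H$-invariant. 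Uniqueness of the GNS in Proposition \ref{Proposition:nt} then transfers this to an $H$-invariant unit section of $\pi^{(t)}$. The implication $(2)\Rightarrow(3)$ is immediate since weak mixing is strictly stronger than ergodicity.

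The main obstacle is $(3)\Rightarrow(1)$. Suppose $\pi^{(t_{0})}|_{H}$ is not weak mixing; by the characterization of weak mixing recalled earlier in the paper, $\pi^{(t_{0})}\otimes\overline{\pi^{(t_{0})}}|_{H}$ admits an $H$-invariant unit section, realized as a Borel field of Hilbert--Schmidt operators intertwining $\pi^{(t_{0})}|_{H}$ with itself. The strategy is to project this intertwiner onto the cyclic subrepresentation generated by $\xi^{(t_{0})}\otimes\overline{\xi^{(t_{0})}}$, which is isomorphic via the GNS of $\exp(-2t_{0}\psi)$ to $\pi^{(2t_{0})}$, to obtain a non-zero $H$-invariant section of $\pi^{(2t_{0})}|_{H}$; reversing the Fock-space construction of $(1)\Rightarrow(2)$ then identifies such an $H$-invariant section with an $H$-fixed point of the affine $H$-action $\gamma\mapsto\pi^{\psi}_{\gamma}(\cdot)+\sqrt{4t_{0}}\,b^{\psi}_{\gamma}$ on each fiber a.e., equivalently with $b^{\psi}|_{H}$ being a coboundary, which gives (1) via Theorem \ref{Theorem:ad-coboundaries}. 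The delicate technical step is ensuring that the projection onto the cyclic subrepresentation is non-zero on a non-null subset of $X$: this is where one must exploit the finite-dimensional invariant sub-bundle structure from non-weak-mixing together with the ergodicity of $H$ and careful Borel selection, adapting the classical Gaussian argument from \cite{jolissaint_property_2005,ad_cohomology_2005} to the groupoid setting.
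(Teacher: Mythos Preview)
Your organization around the cocycle $b^{\psi}$ is natural, and the equivalences among (1), (4), (5) via Theorem~\ref{Theorem:ad-coboundaries} plus a bisection-transport argument are fine; the paper handles (1)$\Leftrightarrow$(4) slightly differently, by passing to $G|_{B}$ and invoking the already-proved (1)$\Leftrightarrow$(2) together with Remark~\ref{Remark:extend}, but your direct argument works as well.

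Where your route diverges substantively is in (1)$\Rightarrow$(2) and (3)$\Rightarrow$(1). For (1)$\Rightarrow$(2), the paper does not touch the Fock machinery: it simply observes that on a non-null $A$ with $\psi|_{AHx}$ bounded by $c_{x}$, every element of the closed convex hull of $\{[\pi^{(t)}]_{\sigma}\xi^{(t)}:\sigma\in[H]\}$ has norm at least $\int\exp(-tc_{x})\,d\mu(x)>0$, so the unique element of minimal norm is a nonzero $H$-invariant section. This is more elementary and sidesteps a point your approach leaves open, namely whether the coherent-state section actually lies in the cyclic sub-bundle $\mathcal{H}^{(t)}\subset\Gamma_{\mathrm{sym}}(\mathcal{H}^{\psi})$ rather than merely in the ambient Fock bundle; GNS uniqueness applies to the $G$-representation with its cyclic vector, while your matrix-coefficient calculation only matches on $H$.

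The serious gap is in (3)$\Rightarrow$(1). You correctly identify the obstruction---the projection of the $H$-invariant Hilbert--Schmidt section onto the cyclic sub-bundle might vanish---but you do not resolve it, and the subsequent step of ``reversing the Fock-space construction'' to convert an $H$-invariant section of $\pi^{(2t_{0})}$ into a fixed point of the affine action is not justified: an invariant vector in a Gaussian-type representation need not arise from a coherent state, so there is no direct inverse to the construction you use in (1)$\Rightarrow$(2). The paper avoids both difficulties by proving the contrapositive. Assuming (1) fails---so that for every non-null $A$ there is a non-null set of $x$ with $\psi|_{AHx}$ unbounded---one shows directly that $(\pi^{(t)}\otimes\overline{\pi^{(t)}})|_{H}$ is ergodic for every $t>0$: given any $\eta$ in the dense span of vectors $[\pi^{(t)}]_{\sigma_{i}}\xi^{(t)}\otimes[\overline{\pi}^{(t)}]_{\sigma'_{j}}\overline{\xi}^{(t)}$, one chooses $\rho\in[H]$ making $\psi(z\rho w)^{1/2}$ large compared to the finitely many edge terms $\psi(\sigma_{k}z)^{1/2}$ and $\psi(w\sigma'_{i})^{1/2}$, and then the identity $\psi(\gamma^{-1}\gamma')=\|b^{\psi}_{\gamma}-b^{\psi}_{\gamma'}\|^{2}$ together with the triangle inequality forces $\exp(-t\psi(x\sigma_{k}^{-1}\rho\sigma'_{i}))$ to be uniformly small, hence $\big|\,\|[\pi^{(t)}\otimes\overline{\pi}^{(t)}]_{\rho}\eta-\eta\|^{2}-2\,\big|<\varepsilon$. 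This direct estimate is the missing idea in your argument.
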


\begin{proof}
(1)$\Rightarrow $(2) Suppose that $\psi |_{AHx}$ is bounded by $c_{x}$ for
a.e.\ $x\in A$, where $A$ is a non-null Borel subset of $X$. In view of
Remark \ref{Remark:extend}, after replacing $G$ with $G|_{A}$, we can assume
that $A=X$. Set $c(t):=\int \exp \left( -tc_{x}\right) d\mu (x)$ for $t>0$.
Define $C$ to be the closed convex hull of $\left\{ \left[ \pi ^{(t)}\right]
\xi ^{(t)}:\sigma \in \left[ H\right] \right\} $. We claim that $\left\Vert
\xi \right\Vert \geq c(t)$ for every $\xi \in C$. It is enough to consider
the case when $\xi =\sum_{i=1}^{n}s_{i}\left[ \pi ^{(t)}\right] _{\sigma
_{i}}\xi ^{(t)}$ for $\sigma _{i}\in \left[ H\right] $ and $s_{i}\in \left[
0,1\right] $ such that $s_{1}+\cdots +s_{n}=1$. In this case, we have that%
\begin{eqnarray*}
\left\Vert \sum_{i}s_{i}[\pi ^{(t)}]_{\sigma _{i}}\xi ^{(t)}\right\Vert ^{2}
&=&\sum_{ij}s_{i}s_{j}\left\langle \xi ^{(t)},[\pi ^{(t)}]_{\sigma
_{i}^{-1}\sigma _{j}}\xi ^{(t)}\right\rangle d\mu (x) \\
&=&\sum_{ij}s_{i}s_{j}\int \exp \left( -t\psi \left( \sigma _{i}^{-1}\sigma
_{j}x\right) \right) d\mu (x) \\
&\geq &\sum_{ij}s_{i}s_{j}\int \exp \left( -tc_{x}\right) d\mu (x) \\
&=&c(t)\text{.}
\end{eqnarray*}%
Pick now an element $\xi $ of $C$ of minimal norm. Observe that $\xi $ is
nonzero, and it is $H$-invariant by uniqueness.

(2)$\Rightarrow $(3) Obvious.

(3)$\Rightarrow $(1) Suppose that (1) does not hold. Thus for every non-null
Borel subset $A$ of $X$, there exists a non-null Borel subset $B$ such that
for every $x\in B$, $\psi |_{AHx}$ is unbounded. Fix $t>0$. We claim that
this implies that, for any unit section $\eta $ for $\mathcal{H}%
^{(t)}\otimes \overline{\mathcal{H}}^{(t)}$ and $\varepsilon \in \left(
0,1\right) $ there exists $\rho \in \left[ H\right] $ such that one has that 
\begin{equation*}
\left\vert \left\Vert \lbrack \pi ^{(t)}\otimes \overline{\pi }^{(t)}]_{\rho
}\eta -\eta \right\Vert ^{2}-2\right\vert <\varepsilon \text{.}
\end{equation*}%
In particular this shows that $\left( \pi ^{(t)}\otimes \overline{\pi }%
^{(t)}\right) |_{H}$ is ergodic, and hence $\pi ^{(t)}|_{H}$ is weak mixing.
Since $\xi ^{(t)}$ is a cyclic unit section for $\mathcal{H}^{(t)}$, it
suffices to prove the claim when $\eta $ is of the form%
\begin{equation*}
x\mapsto \sum_{ij=1}^{n}a_{ij}\left( x\right) ([\pi ^{(t)}]_{\sigma _{i}}\xi
^{(t)})_{x}\otimes ([\overline{\pi }^{(t)}]_{\sigma _{j}^{\prime }}\overline{%
\xi }^{(t)})_{x}\text{,}
\end{equation*}%
where $n\in \mathbb{N}$, $a_{ij}\in L^{\infty }\left( X\right) $, and $%
\sigma _{i},\sigma _{j}^{\prime }\in \left[ G\right] $ for $i,j=1,2,\ldots
,n $. For $z\in X$, fix $M(z)>0$ such that%
\begin{equation}
\max \left\{ \left\vert a_{ij}\left( r\left( \sigma _{k}^{-1}z\right)
\right) \right\vert \exp \left( -tM(z)\right) :1\leq i,j,k\leq n\right\}
\leq \frac{\varepsilon }{n^{4}}\text{.\label{Equation:bound}}  \tag{(i)}
\end{equation}%
By assumption for every non-null Borel subset $A$ of $X$ there exists a
non-null Borel subset $B$ of $A$ such that, for every $x\in B$, $\psi
|_{AHx} $ is unbounded. This easily implies that there exists $\rho \in %
\left[ H\right] $ such that, for a.e.\ $z,w\in X$, for every $1\leq i,k\leq n
$,%
\begin{equation*}
\psi \left( z\rho w\right) \geq M(z)^{1/2}+\psi \left( \sigma _{k}z\right)
^{1/2}+\psi \left( w\sigma _{i}^{\prime }\right) ^{1/2}\text{.}
\end{equation*}%
We have that%
\begin{eqnarray*}
&&\frac{1}{2}\left\vert \left\Vert ([\pi ]_{\rho }\otimes \lbrack \overline{%
\pi }]_{\rho })\eta -\eta \right\Vert ^{2}-2\right\vert \\
&=&\mathrm{Re}\sum_{i,j,k,l=1}^{n}\int_{x\in X}\overline{a}%
_{kl}(x)a_{ij}(x)\left\langle \xi _{x}^{(t)}\otimes \overline{\xi }%
_{x}^{(t)},\pi _{x\sigma _{k}^{-1}\rho \sigma _{i}^{\prime }}^{(t)}\xi
_{s(x\sigma _{k}^{-1}\rho \sigma _{i}^{\prime })}^{(t)}\otimes \overline{\pi 
}_{x\sigma _{l}^{-1}\rho \sigma _{j}^{\prime }}^{(t)}\overline{\xi }%
_{s(x\sigma _{l}^{-1}\rho \sigma _{j}^{\prime })}^{(t)}\right\rangle d\mu (x)
\\
&=&\mathrm{Re}\sum_{i,j,k,l=1}^{n}\int_{x\in X}\overline{a}%
_{kl}(x)a_{ij}(x)\exp \left( -t\psi \left( x\sigma _{k}^{-1}\rho \sigma
_{i}^{\prime }\right) -t\psi \left( x\sigma _{l}^{-1}\rho \sigma
_{j}^{\prime }\right) \right) d\mu (x)\text{.}
\end{eqnarray*}

Now let $(\mathcal{H}^{\psi },b^{\psi },\pi ^{\psi })$ be a triple obtained
from $\psi $ as in Proposition \ref{Proposition:nt}. Thus we have that $%
\mathcal{H}^{\psi }$ is a real Hilbert bundle over $X$, $\pi ^{\psi }$ is a
representation of $G$ on $\mathcal{H}^{\psi }$, and $b^{\psi }$ is a cocycle
for $\pi ^{\psi }$ such that 
\begin{equation*}
\psi \left( g^{-1}h\right) =\left\Vert b_{g}^{\psi }-b_{h}^{\psi
}\right\Vert ^{2}
\end{equation*}%
for $g,h\in G$. Thus, for a.e.\ $x\in X$, by the choice of $\rho $,%
\begin{eqnarray*}
\psi \left( x\sigma _{k}^{-1}\rho \sigma _{i}^{\prime }\right) &=&\left\Vert
b_{r\left( \sigma _{k}x\right) \rho \sigma _{i}^{\prime }}-b_{\sigma
_{k}x}\right\Vert ^{2} \\
&=&\left\Vert b_{r\left( \sigma _{k}x\right) \rho }+\pi _{r\left( \sigma
_{k}x\right) \rho }b_{s\left( r\left( \sigma _{k}x\right) \rho \right)
\sigma _{i}^{\prime }}-b_{\sigma _{k}x}\right\Vert ^{2} \\
&\geq &\left( \left\Vert b_{r\left( \sigma _{k}x\right) \rho }\right\Vert
-\left\Vert b_{s\left( r\left( \sigma _{k}x\right) \rho \right) \sigma
_{i}^{\prime }}\right\Vert -\left\Vert b_{\sigma _{k}x}\right\Vert \right)
^{2} \\
&=&\left( \psi \left( r\left( \sigma _{k}x\right) \rho \right) ^{1/2}-\psi
\left( s(\rho )\sigma _{i}^{\prime }\right) ^{1/2}-\psi \left( \sigma
_{k}x\right) ^{1/2}\right) ^{2} \\
&\geq &M\left( r\left( \sigma _{k}x\right) \right) \text{,}
\end{eqnarray*}%
where we used the choice of $\rho $ at the last step. Similarly,%
\begin{equation*}
\psi \left( \gamma _{l}^{-1}\rho \gamma _{j}^{\prime }\right) \geq M\left(
r\left( \sigma _{l}x\right) \right) \text{.}
\end{equation*}%
Hence we have that%
\begin{eqnarray*}
&&\left\vert \left\Vert (\left[ \pi \right] _{\rho }\otimes \left[ \overline{%
\pi }\right] _{\rho })\eta -\eta \right\Vert ^{2}-2\right\vert \\
&=&\mathrm{Re}\sum_{i,j,k,l=1}^{n}\int_{x\in X}\overline{a}%
_{kl}(x)a_{ij}(x)\exp \left( -t\psi \left( x\sigma _{k}^{-1}\rho \sigma
_{i}^{\prime }\right) -t\psi \left( x\sigma _{l}^{-1}\rho \sigma
_{j}^{\prime }\right) \right) d\mu (x) \\
&\leq &\mathrm{Re}\sum_{i,j,k,l=1}^{n}\int_{x\in X}\overline{a}%
_{kl}(x)a_{ij}(x)\exp \left( -tM\left( r\left( \sigma _{k}x\right) \right)
-tM\left( r\left( \sigma _{l}x\right) \right) \right) d\mu (x)\leq
\varepsilon
\end{eqnarray*}%
by \eqref{Equation:bound}. This concludes the proof.

(1)$\Leftrightarrow $(4) Suppose that $B$ is a non-null Borel subset of $X$,
and $\pi $ is a representation of $G$. Since $H$ is ergodic, by Remark \ref%
{Remark:extend} the equivalence (1)$\Leftrightarrow $(4) follows from the
equivalence (1)$\Leftrightarrow $(2) after replacing $G$ with $G|_{B}$.

(4)$\Leftrightarrow $(5) This follows from Theorem \ref%
{Theorem:ad-coboundaries}.
\end{proof}

\begin{lemma}
\label{Lemma:projection-invariant}Let $\pi $ be a representation of a
discrete pmp groupoid $G$ on $\mathcal{H}$ and fix $\delta >0$. Suppose that 
$\xi $ is a unit section for $\mathcal{H}$. Assume that for every $\gamma
\in G$ one has that $\left\Vert \xi _{r(\gamma )}-\pi _{\gamma }\xi
_{s(\gamma )}\right\Vert \leq \delta $. Then there exists an invariant
section $\eta $ of $\mathcal{H}$ such that$\left\Vert \eta _{x}-\xi
_{x}\right\Vert \leq \delta $ for a.e.\ $x\in X$.
\end{lemma}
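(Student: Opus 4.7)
My plan is to work inside the Hilbert space $L^{2}(X,\mathcal{H})$ and realise $\eta$ as the unique element of minimum norm in an appropriate closed convex subset. To begin, I would fix a countable subgroup $\Gamma\leq[G]$ that covers $G$. For every $\sigma\in\Gamma$, specialising the pointwise hypothesis to the arrows of the form $\gamma=x\sigma$ and integrating over $x\in X$ yields
\[
\bigl\|[\pi]_{\sigma}\xi-\xi\bigr\|_{L^{2}(X,\mathcal{H})}^{2}=\int_{X}\bigl\|\pi_{x\sigma}\xi_{s(x\sigma)}-\xi_{x}\bigr\|^{2}\,d\mu(x)\leq\delta^{2}.
\]

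Next, I would let $V\subset L^{2}(X,\mathcal{H})$ be the closed convex hull of the orbit $\{[\pi]_{\sigma}\xi:\sigma\in\Gamma\}$. Because $\Gamma$ is a group and each $[\pi]_{\sigma}$ acts isometrically, the set $V$ is invariant under $\{[\pi]_{\sigma}:\sigma\in\Gamma\}$, so letting $\eta\in V$ be the (necessarily unique) element of minimum norm, uniqueness gives $[\pi]_{\sigma}\eta=\eta$ for every $\sigma\in\Gamma$. Unpacking, this reads $\pi_{x\sigma}\eta_{s(x\sigma)}=\eta_{x}$ for a.e.\ $x\in X$; since $\Gamma$ covers $G$ and the natural measure on $G$ pushes absolutely continuously onto $\mu$ under $r$, this is equivalent to $\pi_{\gamma}\eta_{s(\gamma)}=\eta_{r(\gamma)}$ for a.e.\ $\gamma\in G$, so $\eta$ is an invariant section of $\mathcal{H}$.

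To conclude, any finite convex combination $v=\sum_{i}t_{i}[\pi]_{\sigma_{i}}\xi$ in $\mathrm{conv}\{[\pi]_{\sigma}\xi:\sigma\in\Gamma\}$ satisfies, at every $x\in X$,
\[
\|v_{x}-\xi_{x}\|=\Bigl\|\sum_{i}t_{i}\bigl(\pi_{x\sigma_{i}}\xi_{s(x\sigma_{i})}-\xi_{x}\bigr)\Bigr\|\leq\sum_{i}t_{i}\bigl\|\pi_{x\sigma_{i}}\xi_{s(x\sigma_{i})}-\xi_{x}\bigr\|\leq\delta
\]
by the hypothesis applied to $\gamma=x\sigma_{i}$. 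Since $\eta$ is the $L^{2}$-limit of such convex combinations, a subsequence converges to $\eta$ pointwise almost everywhere, and the bound $\|\eta_{x}-\xi_{x}\|\leq\delta$ is preserved under this limit, giving the required inequality a.e. The only point requiring any care is the passage between $\Gamma$-invariance of $\eta$ in $L^{2}$ and pointwise $G$-invariance of the underlying section, but that is essentially bookkeeping once a covering $\Gamma$ has been fixed, and I do not anticipate a substantive obstacle.
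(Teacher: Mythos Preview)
Your argument is correct and rests on the same device as the paper---taking the closed convex hull of the orbit and selecting the unique element of minimal norm---but you apply it globally in $L^{2}(X,\mathcal{H})$ via the action of a countable covering subgroup $\Gamma\leq[G]$, whereas the paper works fiberwise: for each $x\in G^{0}$ it forms the closed convex hull $C_{x}$ of $\{\pi_{\gamma^{-1}}\xi_{r(\gamma)}:\gamma\in Gx\}\subset\mathcal{H}_{x}$ and lets $\eta_{x}$ be its point of least norm. The fiberwise route gives the pointwise bound $\|\eta_{x}-\xi_{x}\|\leq\delta$ and invariance immediately, but leaves the Borel measurability of $x\mapsto\eta_{x}$ implicit; your global route makes measurability automatic (the section arises as an $L^{2}$-limit) at the cost of the subsequence argument to recover the pointwise estimate and the bookkeeping to pass from $\Gamma$-invariance in $L^{2}$ to a.e.\ $G$-invariance. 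Neither approach requires an idea the other lacks.
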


\begin{proof}
For every $x\in G^{0}$ let $C_{x}$ be the closed convex hull of $\{\pi
_{\gamma ^{-1}}\xi _{r(\gamma )}:\gamma \in Gx\}$. Let then $\eta _{x}$ be
the (unique) element of minimal norm of $C_{x}$ for $x\in G^{0}$. If $x\in X$
then we have that, for every $\gamma _{1},\ldots ,\gamma _{n}\in Gx$ and $%
s_{1},\ldots ,s_{n}\in \left[ 0,1\right] $ such that $s_{1}+\cdots +s_{n}=1$
one has that%
\begin{equation*}
\left\Vert \sum_{i}s_{i}\pi _{\gamma _{i}^{-1}}\xi _{r\left( \gamma
_{i}\right) }-\xi _{x}\right\Vert \leq \sum_{i}s_{i}\left\Vert \xi _{r\left(
\gamma _{i}\right) }-\pi _{\gamma _{i}}\xi _{x}\right\Vert \leq \delta \text{%
.}
\end{equation*}%
Therefore 
\begin{equation*}
\left\Vert \zeta -\xi _{x}\right\Vert \leq \delta
\end{equation*}%
for every $\zeta \in C_{x}$ and in particular%
\begin{equation*}
\left\Vert \eta _{x}-\xi _{x}\right\Vert \leq \delta \text{.}
\end{equation*}%
By uniqueness of the element of least norm in $C_{x}$ one also has that $%
\eta $ is invariant.
\end{proof}

The proof of the following result is inspired by \cite[Theorem 1.2]%
{jolissaint_property_2005} and \cite[Theorem 4.8 and Theorem 4.12]%
{ad_cohomology_2005}.

\begin{theorem}
\label{Theorem:T}Let $G$ be a discrete pmp groupoid, and $K\leq H\leq G$ be
subgroupoids. Fix a countable subgroup $\Gamma $ of $\left[ G\right] $ that
covers $G$. Let $\left( X,\mu \right) $ be the common unit space of $K,H,G$.
Suppose that $H$ is ergodic. The following statements are equivalent:

\begin{enumerate}
\item There exists a finite subset $F$ of $\left[ G\right] $ and $\delta >0$
such that, whenever a representation $\pi $ of $G$ has an $\left( F,\delta
\right) $-invariant $K$-invariant unit section, then $\pi $ has an $H$%
-invariant unit section;

\item There exists a finite subset $F$ of $\left[ G\right] $ and $\delta >0$
such that, whenever a representation $\pi $ of $G$ on a Hilbert bundle $%
\mathcal{H}$ has an $\left( F,\delta \right) $-invariant $K$-invariant unit
section, then $\mathcal{H}$ contains a finite-dimensional $\pi |_{H}$%
-invariant sub-bundle;

\item For every complex-valued $K$-invariant Borel function $\psi $ on $G$
which is conditionally of negative type, there exists a non-null Borel set $%
A $ of $X$ such that, for every $x\in A$, $\psi |_{AHx}$ is bounded;

\item For any representation $\pi $ of $G$, one has that $H_{:K,H}^{1}\left(
\pi \right) $ is trivial.

\item the triple $K\leq H\leq G$ has property (T);

\item For every $\varepsilon >0$ and non-null Borel subset $B$ of $X$ there
exists a finite subset $F$ of $\Gamma $ and $\delta >0$ such that for every
normalized $K$-invariant function of positive type $\varphi $ on $G$ such
that $\max_{t\in F}\int_{x\in X}\left\vert \varphi \left( xt\right)
-1\right\vert ^{2}d\mu (x)\leq \delta $, there is a non-null Borel subset $A$
of $B$ such that $\mathrm{Re}\left( 1-\varphi (\gamma )\right) \leq
\varepsilon $ for a.e.\ $\gamma \in AHA$;

\item For every $\varepsilon >0$ and non-null Borel subset $B$ of $X$ there
exists a finite subset $F$ of $\Gamma $ and $\delta >0$ such that, if $\pi $
is a representation of $G$ on a Hilbert bundle $\mathcal{H}$, and $\xi $ is
a $\left( F,\delta \right) $-invariant $K$-invariant unit section for $%
\mathcal{H}$, then there is a non-null Borel subset $A$ of $B$ and an $H$%
-invariant section $\eta $ for $\pi $ such that $\left\Vert \xi _{x}-\eta
_{x}\right\Vert \leq \varepsilon $ for a.e.\ $x\in A$;

\item For every $\varepsilon >0$, there exists a finite subset $F$ of $\left[
G\right] $ and $\delta >0$ such that for every normalized $K$-invariant
function of positive type $\varphi $ on $G$ such that $\max_{t\in
F}\int_{x\in X}\left\vert \varphi \left( xt\right) -1\right\vert ^{2}d\mu
(x)\leq \delta $, there is a non-null Borel subset $A$ of $X$ such that $%
\mathrm{Re}\left( 1-\varphi (\gamma )\right) \leq \varepsilon $ for a.e.\ $%
\gamma \in AHA$;

\item For every $\varepsilon >0$, there exists a finite subset $F$ of $\left[
G\right] $ and $\delta >0$ such that, if $\pi $ is a representation of $G$
on a Hilbert bundle $\mathcal{H}$, and $\xi $ is a $\left( F,\delta \right) $%
-invariant $K$-invariant unit section for $\mathcal{H}$, then there is a
non-null Borel subset $A$ of $X$ and an $H$-invariant section $\eta $ for $%
\pi $ such that $\left\Vert \xi _{x}-\eta _{x}\right\Vert \leq \varepsilon $
for a.e.\ $x\in A$.
\end{enumerate}
\end{theorem}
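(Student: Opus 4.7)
The plan is to establish the nine equivalences via the backbone cycle $(5) \Leftrightarrow (1) \Rightarrow (2) \Rightarrow (3) \Rightarrow (4) \Rightarrow (5)$, together with the GNS equivalences $(6) \Leftrightarrow (7)$ and $(8) \Leftrightarrow (9)$, the trivial implications $(6) \Rightarrow (8)$ and $(7) \Rightarrow (9)$ (take $B = X$), the closing arrow $(9) \Rightarrow (1)$, and the bridge $(1) \Rightarrow (7)$. The equivalence $(5) \Leftrightarrow (1)$ is the usual finite-vs.-infinite trade-off: $(1) \Rightarrow (5)$ is immediate, while failure of $(1)$ for every $(F,\delta)$ assembles, via direct sum, a representation contradicting $(5)$. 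The implication $(1) \Rightarrow (2)$ is trivial, as an $H$-invariant unit section spans a one-dimensional $H$-invariant sub-bundle.

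\textbf{The cohomological cycle.} For $(2) \Rightarrow (3)$, I argue by contrapositive via Lemma~\ref{Lemma:bounded}: if some $K$-invariant conditionally negative-type $\psi$ failed (3), then Lemma~\ref{Lemma:bounded} would imply the GNS representation $\pi^{(t)}$ of $\exp(-t\psi)$ is weak mixing on $H$ for every $t > 0$---hence has no finite-dimensional $H$-invariant sub-bundle---while its cyclic $K$-invariant unit section $\xi^{(t)}$ is $(F,\delta(t))$-invariant with $\delta(t) \to 0$ as $t \to 0^+$, contradicting $(2)$; Lemma~\ref{Lemma:real-part-negative-type} reduces the complex case to the real. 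For $(3) \Rightarrow (4)$, if $b$ is a $K$-trivial cocycle for $\pi$, the function $\psi(\gamma) := \|b_\gamma\|^2$ is $K$-invariant conditionally negative type, so $(3)$ combined with the equivalence of Lemma~\ref{Lemma:bounded} yields a non-null $A$ with $\psi|_{AHA}$ bounded; Theorem~\ref{Theorem:ad-coboundaries} applied to $H|_A$ then trivialises $b|_{H|_A}$, and the cocycle identity together with ergodicity of $H$ extends the trivialisation to all of $H$. For $(4) \Rightarrow (5)$, I use a groupoid Delorme--Guichardet argument: taking witnesses $(\pi_n,\xi_n)$ to a failure of $(5)$ and setting $\pi := \bigoplus_n \pi_n$, the cocycle $b_\gamma := \bigoplus_n (\pi_{n,\gamma}\xi_{n,s(\gamma)} - \xi_{n,r(\gamma)})$ is $K$-trivial, and $b|_H$ cannot be a coboundary, for else each $\xi_n - \eta^{(n)}$ would be $H$-invariant in $\pi_n$, contradicting $\|\xi_n\| = 1$ together with $\sum_n \|\eta^{(n)}\|^2 < \infty$.

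\textbf{Local versions.} The equivalences $(6) \Leftrightarrow (7)$ and $(8) \Leftrightarrow (9)$ are formal translations via GNS: the $L^2$-closeness of a positive-type $\varphi$ to $1$ on $F$ is equivalent to $(F,\delta)$-invariance of its cyclic $K$-invariant unit section $\xi^\varphi$, and the identity
\[
\operatorname{Re}(1 - \varphi(\gamma)) = \tfrac{1}{2} \|\xi^\varphi_{r(\gamma)} - \pi^\varphi_\gamma \xi^\varphi_{s(\gamma)}\|^2,
\]
applied together with Lemma~\ref{Lemma:projection-invariant} on the reduction $H|_A$ and Remark~\ref{Remark:extend}, converts a pointwise bound on $\operatorname{Re}(1-\varphi)$ on $AHA$ into the existence of an $H$-invariant section on $X$ close to $\xi^\varphi$ on $A$, and vice versa. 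The trivial implications $(6) \Rightarrow (8)$ and $(7) \Rightarrow (9)$ come from specialising to $B = X$. For $(9) \Rightarrow (1)$, apply $(9)$ with $\varepsilon = 1/2$: the resulting $H$-invariant section $\eta$ has $\|\eta_x\| \geq 1/2$ on a non-null set, and by ergodicity of $H$ the function $x \mapsto \|\eta_x\|$ is a non-zero constant a.e., so $\eta/\|\eta\|$ is an $H$-invariant unit section.

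\textbf{The main obstacle: the bridge $(1) \Rightarrow (7)$.} The delicate step is $(1) \Rightarrow (7)$. Fix $\varepsilon > 0$ and a non-null $B \subset X$. The plan is to use the orthogonal projection $P : L^2(X,\mathcal{H}) \to L^2(X,\mathcal{H})^H$ onto the closed subspace of $H$-invariant sections, and to show that for $F,\delta$ chosen appropriately any $(F,\delta)$-invariant $K$-invariant unit section $\xi$ of $\pi$ satisfies $\|\xi - P\xi\|_{L^2}^2 < \varepsilon^2 \mu(B)$; a Chebyshev estimate then produces a non-null $A \subset B$ on which $\eta := P\xi$ is pointwise $\varepsilon$-close to $\xi$, as required. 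The obstacle is bounding $\|\xi - P\xi\|_{L^2}$ via $(1)$: the normalisation $\zeta$ of the residual lies in the orthogonal complement of the $H$-invariant sections, is $K$-invariant, and (since $P$ commutes with $[\pi]_t$ for $t \in [H]$ but not for general $t \in [G]$) is automatically only $[H]$-almost-invariant, so the hypothesis of $(1)$ is not directly satisfied on the complementary sub-representation. The resolution I plan---adapting \cite{jolissaint_property_2005} to the groupoid setting---is to show that property (T) of the triple $K \leq H \leq G$ is inherited by the reduction $K|_B \leq H|_B \leq G|_B$ via an induction construction that lifts any witness over $G|_B$ to a witness over $G$ (using Remark~\ref{Remark:extend} and standard groupoid Borel surgery), which then allows a direct application of the reduced form of $(1)$ to the residual representation and yields the required quantitative bound.
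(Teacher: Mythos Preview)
Your backbone cycle $(1)\Leftrightarrow(5)$, $(1)\Rightarrow(2)\Rightarrow(3)\Rightarrow(4)\Rightarrow(5)$ is sound and matches the paper's route (your $(4)\Rightarrow(5)$ via a Delorme--Guichardet direct-sum cocycle is a legitimate alternative to the paper's open-mapping-theorem argument; just make sure you note that the summability $\sum_n\|\pi_{n,\gamma}\xi_{n,s(\gamma)}-\xi_{n,r(\gamma)}\|^2<\infty$ holds only a.e.\ and requires choosing the $\xi_n$ with geometrically decaying defect on an exhaustion $F_n\nearrow\Gamma$). The GNS translations $(6)\Leftrightarrow(7)$, $(8)\Leftrightarrow(9)$ and the easy arrows $(6)\Rightarrow(8)$, $(7)\Rightarrow(9)$, $(9)\Rightarrow(1)$ are all fine.

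The genuine gap is your bridge $(1)\Rightarrow(7)$. You correctly diagnose the obstacle: the orthogonal complement of the $H$-invariant sections is only an $H$-sub-bundle, not a $G$-sub-bundle, so the normalised residual $\zeta=(\xi-P\xi)/\|\xi-P\xi\|$ is $K$-invariant and lies in a representation with no $H$-invariant sections, but it is \emph{not} $(F,\delta)$-invariant for $G$---the term $[\pi]_tP\xi-P\xi$ need not be small for $t\in[G]\setminus[H]$. Your proposed fix, passing to the reduction $G|_B$, does nothing to address this: on $G|_B$ the complement of $H|_B$-invariants is still not a $G|_B$-sub-bundle, and the residual is still only $[H|_B]$-almost-invariant. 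There is no induction or Borel-surgery argument that manufactures $G$-almost-invariance out of $H$-almost-invariance here; the Jolissaint argument you cite works in the group case $H=G$ precisely because then the invariant vectors \emph{do} form a $G$-subrepresentation.

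The paper circumvents this entirely by proving $(3)\Rightarrow(6)$ instead of $(1)\Rightarrow(7)$: assuming $(6)$ fails for some $\varepsilon_0,B$, one extracts a sequence $(\varphi_n)$ of $K$-invariant normalised positive-type functions with $\varphi_n\to 1$ on $F_n$ but $\mathrm{Re}(1-\varphi_n)\geq\varepsilon_0$ on a non-null subset of $AHA$ for every non-null $A\subset B$, and then (after passing to subsets $X_n$ of large measure where the convergence is uniform) sets
\[
\psi(\gamma)\;:=\;\sum_{n}2^{n}\,\mathrm{Re}\bigl(1-\varphi_n(\gamma)\bigr).
\]
This is a $K$-invariant real conditionally-negative-type function which, by construction and Lemma~\ref{Lemma:bounded}, is unbounded on $AHA$ for every non-null $A\subset B$, hence on $AHx$ for a.e.\ $x$ in every non-null $A$, contradicting $(3)$. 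This construction is the missing idea; once you have $(3)\Rightarrow(6)$, the chain $(6)\Rightarrow(7)$ (which you already have) and $(6)\Rightarrow(8)\Rightarrow(9)$ closes everything. I recommend replacing your $(1)\Rightarrow(7)$ attempt with this argument.
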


\begin{proof}
Fix an increasing sequence $\left( F_{n}\right) $ of finite subsets of $%
\left[ G\right] $ whose union is $\Gamma $.

(1)$\Rightarrow $(2) Obvious.

(2)$\Rightarrow $(3) This is a consequence of Lemma \ref{Lemma:bounded}.

(3)$\Rightarrow $(4) As in the proof of \cite[Proposition 4.13]%
{ad_cohomology_2005}, it is enough to consider the case when $\pi $ is a
representation of $G$ on a bundle of real\emph{\ }Hilbert spaces; see also 
\cite[Lemma 4.11]{ad_cohomology_2005} and \cite[page 49]%
{harpe_propriete_1989}. Suppose that $b$ is a $K$-trivial cocycle for $\pi $%
. Define the $K$-invariant function of conditional negative type $\psi
:G\rightarrow \mathbb{R}$ by $\psi (\gamma )=\left\Vert b_{\gamma
}\right\Vert ^{2}$. Then by assumption, there exists a non-null Borel subset 
$A$ of $X$ such that, for every $x\in A$, $\psi |_{AHx}$ is bounded. This
implies by Theorem \ref{Theorem:ad-coboundaries} that the restriction of $b$
to $H$ is a coboundary for $\pi |_{H}$. Thus $H_{:K,H}^{1}\left( \pi \right) 
$ is trivial.

(4)$\Rightarrow $(5) Suppose by contradiction that there exists a
representation $\pi $ of $G$ that has almost invariant $K$-invariant unit
sections but it does not have an $H$-invariant unit section. The hypothesis
implies that $B_{:K}^{1}\left( \pi |_{H}\right) =Z_{:K,H}^{1}\left( \pi
\right) $. In particular, $B_{:K}^{1}\left( \pi |_{H}\right) $ is a closed
subspace of $S\left( H,\mathcal{H}\right) $. Let $S_{:K}\left( X,\mathcal{H}%
\right) $ be the space of $K$-invariant unit sections for $\mathcal{H}$,
which is a closed subspace of the space $S\left( X,\mathcal{H}\right) $ of
sections for $\mathcal{H}$. Define a map $\beta $ from the space $%
S_{:K}\left( X,\mathcal{H}\right) $ to $B_{:K}^{1}\left( \pi |_{H}\right) $
by%
\begin{equation*}
\beta \left( \xi \right) _{\gamma }:=\xi _{r(\gamma )}-\pi _{\gamma }\xi
_{s(\gamma )}
\end{equation*}%
for $\xi \in S\left( X,\mathcal{H}\right) $ and $\gamma \in H$. This map is
a continuous linear map from $S_{:K}\left( X,\mathcal{H}\right) $ onto $%
B_{:K}^{1}\left( \pi |_{H}\right) $. Since by assumption $\pi $ does not
have an $H$-invariant unit section, such a map is injective. Since a
continuous linear isomorphism between metrizable complete topological vector
spaces is a homeomorphism, $\beta $ is a homeomorphism. Since by assumption $%
\pi $ has almost invariant $K$-invariant unit sections, we can find a
sequence $\left( \xi ^{(n)}\right) $ of $K$-invariant unit sections in $%
S_{:K}\left( X,\mathcal{H}\right) $ such that $\beta (\xi ^{(n)})\rightarrow
0$. Therefore $\xi ^{(n)}\rightarrow 0$, contradicting the fact that the $%
\xi ^{(n)}$'s are unit sections.

(5)$\Rightarrow $(1) Assume that (1) does not hold. Then for every $n\in 
\mathbb{N}$ there exists a representation $\pi ^{(n)}$ on $\mathcal{H}^{(n)}$
without $H$-invariant unit sections which has a $\left( F_{n},2^{-n}\right) $%
-invariant $K$-invariant unit section $\xi ^{(n)}$. One can then consider
the direct sum $\pi $ of $\pi ^{(n)}$ for $n\in \mathbb{N}$. Then $\pi $ has
almost invariant $K$-invariant unit sections. Hence by assumption it has an $%
H$-invariant unit section $\xi $. One can write $\xi $ as the direct sum of
sections $\xi ^{(n)}$ for $\mathcal{H}^{(n)}$ for $n\in \mathbb{N}$. Since $%
\xi $ is a $H$-invariant, one has that $\xi ^{(n)}$ is $H$-invariant for
every $n\in \mathbb{N}$. Since $\xi $ is a unit section, there exists $n\in 
\mathbb{N}$ such that $\xi ^{(n)}$ is not identically zero. Since $H$ is
ergodic, this contradicts the assumption that $\pi ^{(n)}$ does not have $H$%
-invariant unit sections.

(3)$\Rightarrow $(6) Suppose that (6) fails. Then there exists $c>0$ and a
non-null Borel subset $B$ of $X$ such that for every $n\in \mathbb{N}$ one
can find a $K$-invariant function of positive type $\varphi $ on $G$ such
that $\max_{t\in F_{n}}\int_{x\in X}\left\vert \varphi \left( xt\right)
-1\right\vert d\mu (x)\leq 2^{-n}$ and for every non-null Borel subset $A$
of $B$ the set of $\gamma \in AHA$ such that $\mathrm{Re}\left( 1-\varphi
(\gamma )\right) \geq c$ is non-null. This allows one to find a sequence $%
\left( \varphi _{n}\right) $ of $K$-invariant functions of positive type on $%
G$ and Borel subsets $X_{n}$ of $X$ such that $\mu \left( X_{n}\right) \geq
1-2^{-n}$, $\left\vert \varphi \left( xt\right) -1\right\vert \leq 2^{-n}$
for $x\in X_{n}$ and $t\in F_{n}$, and such that for every non-null Borel
subset $A$ of $B$ the set of $\gamma \in AHA$ such that \textrm{Re}$\left(
1-\varphi (\gamma )\right) \geq c$ is non-null. One can then define, for
a.e.\ $\gamma \in G$,%
\begin{equation*}
\psi (\gamma )=\sum_{n\in \mathbb{N}}2^{n}\mathrm{Re}\left( 1-\varphi
_{n}(\gamma )\right) \text{.}
\end{equation*}%
This gives a $K$-invariant function of conditionally negative type on $G$
such that $\psi |_{AHA}$ is unbounded for every non-null Borel subset $A$ of 
$B$. By Lemma \ref{Lemma:bounded}, this implies that, for every non-null
Borel subset $A$ of $X$, for a.e.\ $x\in A$, $\psi |_{AHx}$ is unbounded.
Thus $\psi $ contradicts (3).

(6)$\Rightarrow $(7) Fix $\varepsilon >0$ and a non-null Borel subset $B$ of 
$X$. By assumption there exist a finite subset $F$ of $\Gamma $ and $\delta
>0$ such that, for every $K$-invariant normalized function of positive type $%
\varphi $ on $G$ such that $\max_{t\in F}\int_{x\in X}\left\vert \varphi
\left( xt\right) -1\right\vert d\mu (x)\leq \delta $, there is a non-null
Borel subset $A$ of $B$ such that $\mathrm{Re}\left( 1-\varphi (\gamma
)\right) \leq \varepsilon $ for every $\gamma \in AHA$. Suppose that $\pi $
is a representation of $G$ on $\mathcal{H}$ that has a $K$-invariant unit
section $\xi $ satisfying $\left\Vert \left[ \pi \right] _{t}\xi -\xi
\right\Vert \leq \delta $ for $t\in F$. Define the $K$-invariant normalized
function of positive type $\varphi $ on $G$ by $\varphi (\gamma
)=\left\langle \xi _{r(\gamma )},\pi _{\gamma }\xi _{s(\gamma
)}\right\rangle $. Then we have that, for $t\in F$,%
\begin{eqnarray*}
\int_{x\in X}\left\vert \varphi \left( xt\right) -1\right\vert d\mu \left(
x\right) &=&\int_{x\in X}\left\vert \left\langle \xi _{x},\pi _{xt}\xi
_{s\left( xt\right) }\right\rangle -1\right\vert d\mu (x)=\int_{x\in
X}\left\vert \left\langle \xi _{x},\pi _{xt}\xi _{s\left( xt\right) }-\xi
_{x}\right\rangle \right\vert d\mu (x) \\
&=&\left\vert \left\langle \xi ,\left[ \pi \right] _{t}\xi -\xi
\right\rangle \right\vert \leq \left\Vert \left[ \pi \right] _{t}\xi -\xi
\right\Vert \leq \delta \text{.}
\end{eqnarray*}%
Therefore by assumption there exists a non-null Borel subset $A$ of $B$ such
that, for $\gamma \in AHA$, one has that $\mathrm{Re}\left( 1-\varphi
(\gamma )\right) \leq \varepsilon $. Therefore we have that, for $\gamma \in
AHA$, 
\begin{equation*}
\frac{1}{2}\left\Vert \pi _{\gamma }\xi _{s(\gamma )}-\xi _{r(\gamma
)}\right\Vert ^{2}=\mathrm{Re}\left( 1-\varphi (\gamma )\right) \leq
\varepsilon \text{.}
\end{equation*}%
Therefore by Lemma \ref{Lemma:projection-invariant} applied to the
representation $\pi _{A}$ of $H|_{A}$ on $\mathcal{H}|_{A}$ obtained from $%
\pi $ by restriction, we have that there exists a unit section $\eta $ for $%
\mathcal{H}|_{A}$ which is invariant for $\pi _{A}$ and such that $%
\left\Vert \xi _{x}-\eta _{x}\right\Vert \leq \varepsilon $ for $x\in A$.
Since $H$ is ergodic, this concludes the proof by Remark \ref{Remark:extend}.

(8)$\Rightarrow $(9) This is the same as (6)$\Rightarrow $(7).

Finally the implications (7)$\Rightarrow $(1), (6)$\Rightarrow $(8), and (9)$%
\Rightarrow $(1) are obvious.
\end{proof}

\begin{remark}
Theorem \ref{Theorem:T} in the case when $G$ is a countable discrete group
and $K$ is the trivial subgroup recovers \cite[Theorem 1]%
{jolissaint_property_2005}. Theorem \ref{Theorem:T} recovers \cite[Theorems
4.8, Theorem 4.12 and Theorem 5.22]{ad_cohomology_2005} in the case when $%
H=G $ and $K$ is the trivial subgroupoid.
\end{remark}

\subsection{Property (T) for action groupoids}

Suppose that $\left( X,\mu \right) $ is a standard probability space. A
standard probability space fibered over $\left( X,\mu \right) $ is a triple $%
\left( Y,\nu ,p\right) $ where $\left( Y,\nu \right) $ is a standard
probability space, and $p:Z\rightarrow X$ is a Borel map such that $p_{\ast
}\nu =\mu $. We also write $\left( Y,\nu \right) =\bigsqcup_{x\in X}\left(
Y_{x},\nu _{x}\right) $ where $\left( \nu _{x}\right) _{x\in X}$ is the
disintegration of $\nu $ with respect to $\mu $. One can consider the space $%
\mathrm{\mathrm{Aut}}\left( \bigsqcup_{x\in X}Y_{x}\right) $ of Borel maps $%
T:Y_{x}\rightarrow Y_{y}$ for $x,y\in X$ such that $T_{\ast }\nu _{x}=\nu
_{y}$. One can define a standard Borel structure on $\mathrm{\mathrm{Aut}}%
\left( \bigsqcup_{x\in X}Y_{x}\right) $ that turns it into a standard Borel
groupoid, whose unit space can be identified with $X$.

Suppose that $G$ is a discrete pmp groupoid, and $\bigsqcup_{x\in X}Y_{x}$
is a standard probability space fibered over $G^{0}$. A
probability-measure-preserving (pmp) \emph{action }$\theta $ of $G$ on $%
\bigsqcup_{x\in X}Y_{x}$ is a homomorphism $\gamma \mapsto \theta _{\gamma }$
from $G$ to $\mathrm{Aut}\left( \bigsqcup_{x\in X}Y_{x}\right) $ that is the
identity on the unit space. We set $\gamma \cdot ^{\theta }y=\theta _{\gamma
}(y)$ for $\gamma \in G$ and $y\in Y_{s(\gamma )}$. One can then define the 
\emph{transformation groupoid }$G\ltimes ^{\theta }Y$. This is the set of
pairs $\left( \gamma ,y\right) $ such that $\gamma \in G$ and $y\in
Y_{s(\gamma )}$, which is a Borel subset of $G\times Y$ endowed with the
product topology. Identifying an element $y$ of $Y_{x}$ for $x\in X$ with
the pair $\left( x,y\right) $, one can identify $Y$ with the unit space of $%
G\ltimes ^{\theta }Y$. The source and range maps on $G\ltimes ^{\theta }Y$
are defined by $s\left( \gamma ,y\right) =y$ and $r\left( \gamma ,y\right)
=\gamma \cdot ^{\theta }y$. Composition of arrows is given by $\left( \gamma
,y\right) \left( \gamma ^{\prime },y^{\prime }\right) =\left( \gamma \gamma
^{\prime },y^{\prime }\right) $ whenever $\gamma ^{\prime }\cdot ^{\theta
}y^{\prime }=y$.

Suppose that $G$ is a discrete pmp groupoid, and $\theta $ is an action of $%
G $ on the standard probability space $Y=\bigsqcup_{x\in G^{0}}Y_{x}$. One
can then consider the transformation groupoid $G\ltimes ^{\theta }Y$.
Suppose that $K\leq H\leq G$ are subgroupoids. A representation $\pi $ of $G$
on $\mathcal{H}$ induces a representation $\pi \ltimes ^{\theta }Y$ of $%
G\ltimes ^{\theta }Y$ on $\mathcal{H}$ defined by $\left( \pi \ltimes
^{\theta }Y\right) _{\gamma \ltimes ^{\theta }x}\left( \xi \right) =\pi
_{\gamma }\left( \xi \right) $ for $\xi \in \mathcal{H}_{x}$. Let $b\in
S\left( G,\mathcal{H}\right) $ be a cocycle for $\pi $. Then one can define
a cocycle $b\ltimes ^{\theta }Y$ for $\pi \ltimes ^{\theta }Y$ by setting $%
\left( b\ltimes ^{\theta }Y\right) _{\gamma \ltimes ^{\theta }x}=b_{\gamma }$%
. It is clear that if $b$ is $K$-trivial then $b\ltimes ^{\theta }Y$ is $%
K\ltimes ^{\theta }Y$-trivial. Furthermore if $b|_{H}$ is a coboundary for $%
\pi |_{H}$, then $b|_{H}\ltimes ^{\theta }Y$ is a coboundary for $\left( \pi
\ltimes ^{\theta }Y\right) |_{H\ltimes ^{\theta }Y}$. Therefore the
assignment $b\mapsto b\ltimes ^{\theta }Y$ defines a homomorphism $%
H_{:K,H}^{1}\left( \pi \right) \rightarrow H_{:K\ltimes ^{\theta }Y,H\ltimes
^{\theta }Y}^{1}\left( \pi \ltimes Y\right) $. The following lemma is an
immediate consequence of \cite[Lemma 5.12]{ad_cohomology_2005}.

\begin{lemma}
\label{Lemma:T-action}Adopting the notation above, suppose that $H$ is
ergodic and the action $\theta |_{H}$ of $H$ on $Y$ is ergodic. Then the
homomorphism $H_{:K,H}^{1}\left( \pi \right) \rightarrow H_{:K\ltimes
^{\theta }Y,H\ltimes ^{\theta }Y}^{1}\left( \pi \ltimes Y\right) $ is
injective.
\end{lemma}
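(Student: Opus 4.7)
The plan is to apply Theorem \ref{Theorem:ad-coboundaries} to the ergodic groupoid $H$. Suppose $b \in Z_{:K}^{1}(\pi)$ has the property that $(b \ltimes^{\theta} Y)|_{H \ltimes^{\theta} Y}$ is a coboundary for $(\pi \ltimes^{\theta} Y)|_{H \ltimes^{\theta} Y}$; this means there exists a section $\eta$ of the pullback bundle over $Y$ (so $\eta_{y} \in \mathcal{H}_{p(y)}$, where $p\colon Y \to X$ is the structural projection) with
\[
b_{\gamma} = \eta_{\gamma \cdot^{\theta} y} - \pi_{\gamma} \eta_{y}
\]
for a.e.\ $(\gamma, y) \in H \ltimes^{\theta} Y$. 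My goal is to produce a section $\xi$ of $\mathcal{H}$ over $X$, necessarily $K$-invariant by the final step below, witnessing that $b|_{H}$ is a coboundary for $\pi|_{H}$.

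The key observation is that the cocycle identity above forces $\|b_{\gamma}\| \leq \|\eta_{\gamma \cdot^{\theta} y}\| + \|\eta_{y}\|$, so the norm of $b_\gamma$ is controlled by $\eta$ as soon as $y$ and $\gamma \cdot^\theta y$ both lie in a common level set. Truncate by setting $B_{N} := \{y \in Y : \|\eta_{y}\| \leq N\}$. Since $B_{N} \uparrow Y$, monotone convergence applied to the disintegration $\nu = \int_{X} \nu_{x}\, d\mu(x)$ gives that the function $\phi_{N}(x) := \nu_{x}(B_{N} \cap Y_{x})$ satisfies $\phi_{N}(x) \nearrow 1$ for almost every $x$. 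Hence the set $A_{N} := \{x \in X : \phi_{N}(x) > 1/2\}$ is non-null once $N$ is sufficiently large; fix such an $N$. For any $\gamma \in A_{N} H A_{N}$, the subsets $B_{N} \cap Y_{s(\gamma)}$ and $\theta_{\gamma}^{-1}(B_{N} \cap Y_{r(\gamma)})$ of the probability space $(Y_{s(\gamma)}, \nu_{s(\gamma)})$ (using that $\theta_{\gamma}$ is measure-preserving) have measures summing to more than $1$, so a fiberwise pigeonhole produces $y$ in their intersection, and $\|b_{\gamma}\| \leq 2N$ follows.

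Thus $\gamma \mapsto \|b_{\gamma}\|$ is bounded on $A_{N} H A_{N}$. Since $H$ is ergodic, Theorem \ref{Theorem:ad-coboundaries} yields a section $\xi$ of $\mathcal{H}$ over $X$ with $b_{\gamma} = \xi_{r(\gamma)} - \pi_{\gamma} \xi_{s(\gamma)}$ for a.e.\ $\gamma \in H$. Because $K \subseteq H$ and $b$ is $K$-trivial, evaluating this identity at $\gamma \in K$ forces $\xi_{r(\gamma)} = \pi_{\gamma} \xi_{s(\gamma)}$, so $\xi$ is automatically $K$-invariant; hence $b|_{H} \in B_{:K}^{1}(\pi|_{H})$, which is precisely the desired vanishing in $H_{:K,H}^{1}(\pi)$. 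The only nontrivial step is the fiberwise pigeonhole argument connecting boundedness of $b$ on the transformation groupoid with boundedness on the base $H$; the rest of the proof is mechanical, reducing everything to Theorem \ref{Theorem:ad-coboundaries}.
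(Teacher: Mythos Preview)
Your argument is correct. The paper's own proof simply invokes \cite[Lemma 5.12]{ad_cohomology_2005} as a black box, whereas you have supplied a self-contained proof of that implication using only Theorem~\ref{Theorem:ad-coboundaries}. The truncation/pigeonhole step you give is exactly the standard mechanism: a coboundary for $(\pi\ltimes^{\theta}Y)|_{H\ltimes^{\theta}Y}$ forces $\gamma\mapsto\|b_{\gamma}\|$ to be bounded on $A_{N}HA_{N}$ for some non-null $A_{N}$, and then ergodicity of $H$ plus Theorem~\ref{Theorem:ad-coboundaries} finishes. Two small points worth tightening: first, the sentence ``for any $\gamma\in A_{N}HA_{N}$'' should really read ``for a.e.\ $\gamma\in A_{N}HA_{N}$'', since you need a Fubini step to know that the coboundary identity holds for $\nu_{s(\gamma)}$-a.e.\ $y$, and then you use that your pigeonhole intersection has \emph{positive} $\nu_{s(\gamma)}$-measure (not merely that it is nonempty) to find a $y$ where both the bound and the identity hold simultaneously. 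Second, note that your proof never uses the hypothesis that $\theta|_{H}$ is ergodic---only that $H$ itself is ergodic and that each $\theta_{\gamma}$ is measure-preserving---so you have in fact established a slightly stronger statement than the one recorded in the lemma.
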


\begin{proof}
Suppose that $b$ is a $K$-invariant cocycle for $\pi $ on $\mathcal{H}$.
Assume that $\left( b\ltimes ^{\theta }Y\right) |_{H\ltimes ^{\theta }Y}$ is
a coboundary. Then by \cite[Lemma 5.12]{ad_cohomology_2005}, $b|_{H}$ is a
coboundary. Thus the map $H_{:K,H}^{1}\left( \pi \right) \rightarrow
H_{:K\ltimes ^{\theta }Y,H\ltimes ^{\theta }Y}^{1}\left( \pi \ltimes
Y\right) $ is injective.
\end{proof}

Suppose now, adopting the notation above, that $\pi $ is a representation of 
$H\ltimes ^{\theta }Y$ on $\mathcal{H}$. One can then define the \emph{%
induced representation} $\hat{\pi}$ of $G$ as follows. Consider the Hilbert
bundle $\widehat{\mathcal{H}}=\bigsqcup_{x\in X}L^{2}\left( Y_{x}\right)
\otimes \mathcal{H}_{x}\cong \bigsqcup_{x\in X}L^{2}\left( Y_{x},\mathcal{H}%
_{x}\right) $. Then the representation $\hat{\pi}$ on $\widehat{\mathcal{H}}$
is defined by setting, for $\gamma \in G$ and $\xi \in L^{2}\left(
Y_{s(\gamma )},\mathcal{H}_{s(\gamma )}\right) $, $\hat{\pi}_{\gamma }\xi $
to be the element of $L^{2}\left( Y_{r(\gamma )},\mathcal{H}_{r(\gamma
)}\right) $ given by%
\begin{equation*}
(\hat{\pi}_{\gamma }\xi )(y)=\pi _{\gamma \ltimes ^{\theta }\gamma
^{-1}y}\xi (\gamma ^{-1}\cdot ^{\theta }y)
\end{equation*}%
for $y\in Y_{r(\gamma )}$. Observe that this is indeed a representation. In
fact we have that%
\begin{eqnarray*}
\hat{\pi}_{\gamma }(\hat{\pi}_{\rho }\xi ) &=&\pi _{\gamma \ltimes ^{\theta
}\gamma ^{-1}y}(\hat{\pi}_{\rho }\xi )(\gamma ^{-1}\cdot ^{\theta }y)=\pi
_{\gamma \ltimes ^{\theta }\gamma ^{-1}y}(\pi _{\rho \rtimes ^{\theta }\rho
^{-1}\gamma ^{-1}y})\xi (\rho ^{-1}\gamma ^{-1}\cdot ^{\theta }y) \\
&=&\pi _{\gamma \rho \ltimes ^{\theta }\left( \gamma \rho \right) ^{-1}y}\xi
(\left( \gamma \rho \right) ^{-1}\cdot ^{\theta }y)=\hat{\pi}_{\gamma \rho
}(y)\text{.}
\end{eqnarray*}

Given a section $\xi $ for $\mathcal{H}$ one can define the section $\hat{\xi%
}$ of $\widehat{\mathcal{H}}$ by setting $\hat{\xi}_{x}=\xi |_{Y_{x}}\in
L^{2}\left( Y_{x},\mathcal{H}_{x}\right) $ for $x\in G^{0}$. It is clear
that if $\xi $ is $K\ltimes Y$-invariant, then $\hat{\xi}$ is $K$-invariant.
Furthermore it is shown in \cite[Section 5]{ad_cohomology_2005} that if $%
\left( \xi _{n}\right) $ is a sequence of almost $\pi $-invariant unit
sections for $\mathcal{H}$, then $(\hat{\xi}_{n})$ is a sequence of almost $%
\hat{\pi}$-invariant unit sections for $\widehat{\mathcal{H}}$. As in the
proof of \cite[Theorem 5.15]{ad_cohomology_2005} one can deduce from Lemma %
\ref{Lemma:T-action} and these observations the following.

\begin{theorem}
\label{Theorem:action-groupoid-T}Suppose that $G$ is a discrete pmp
groupoid, and $K\leq H\leq G$ are subgroupods such that $H$ is ergodic. Let $%
\theta $ be an action of $G$ on a standard probability space $Y$ such that $%
\theta |_{H}$ is ergodic. Then $K\leq H\leq G$ has property (T) if and only
if $K\ltimes ^{\theta }Y\leq H\ltimes ^{\theta }Y\leq G\ltimes ^{\theta }Y$
has property (T).
\end{theorem}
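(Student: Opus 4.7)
The plan is to argue the two directions separately, using the cohomological characterization of property (T) for the reverse implication and the induced representation construction for the forward implication.

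For the reverse direction, I would assume that $K \ltimes^{\theta} Y \leq H \ltimes^{\theta} Y \leq G \ltimes^{\theta} Y$ has property (T). Since $\theta|_H$ is ergodic, the transformation groupoid $H \ltimes^{\theta} Y$ is ergodic, so the equivalence (4)$\Leftrightarrow$(5) of Theorem \ref{Theorem:T} gives $H_{:K\ltimes^{\theta} Y, H\ltimes^{\theta} Y}^{1}(\tilde{\pi}) = 0$ for every representation $\tilde{\pi}$ of $G \ltimes^{\theta} Y$. Given any representation $\pi$ of $G$, specializing this vanishing to $\tilde{\pi} = \pi \ltimes^{\theta} Y$ and combining with the injectivity statement of Lemma \ref{Lemma:T-action} yields $H_{:K,H}^{1}(\pi) = 0$. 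Since $\pi$ was arbitrary and $H$ is ergodic, Theorem \ref{Theorem:T} then allows me to conclude that $K \leq H \leq G$ has property (T).

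For the forward direction, I would assume that $K \leq H \leq G$ has property (T), and let $\rho$ be a representation of $G \ltimes^{\theta} Y$ on a Hilbert bundle $\mathcal{H}$ over $Y$ admitting almost invariant $K \ltimes^{\theta} Y$-invariant unit sections $(\xi_{n})$. Form the induced representation $\hat{\rho}$ of $G$ on $\widehat{\mathcal{H}}$. By the induction properties recalled just before the theorem statement (and borrowed from \cite{ad_cohomology_2005}), the sequence $(\hat{\xi}_{n})$ consists of $K$-invariant unit sections that are almost $\hat{\rho}$-invariant. Property (T) of $K \leq H \leq G$ then produces an $H$-invariant unit section $\hat{\eta}$ for $\hat{\rho}$; this corresponds to a measurable family $y \mapsto \eta(y) \in \mathcal{H}_{y}$ on $Y$ satisfying $\int_{Y_{x}} \|\eta(y)\|^{2}\, d\nu_{x}(y) = 1$ for a.e.\ $x \in G^{0}$.

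Unwinding the definition of $\hat{\rho}$, the $H$-invariance of $\hat{\eta}$ is equivalent to the identity $\rho_{\gamma \ltimes^{\theta} z}\, \eta(z) = \eta(\gamma \cdot^{\theta} z)$ for a.e.\ $(\gamma, z) \in H \ltimes^{\theta} Y$; in particular $\eta$ is $H \ltimes^{\theta} Y$-invariant for $\rho$. Since each $\rho_{\gamma \ltimes^{\theta} z}$ is an isometry, the function $y \mapsto \|\eta(y)\|$ is $H \ltimes^{\theta} Y$-invariant, hence constant almost everywhere on $Y$ by the assumed ergodicity of $\theta|_{H}$; the integral normalization then forces this constant to equal $1$, so $\eta$ is an $H \ltimes^{\theta} Y$-invariant unit section for $\rho$. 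The main obstacle is precisely this last step: extracting an honest pointwise unit section of $\rho$ from the invariant section $\hat{\eta}$, whose fiberwise norm is \emph{a priori} only known to be square-integrable with integral one. This is the juncture at which the ergodicity of $\theta|_{H}$ enters in an essential way, and it is also the only place where a hypothesis beyond Lemma \ref{Lemma:T-action} is used.
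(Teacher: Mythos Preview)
Your proposal is correct and follows essentially the same approach as the paper: the paper simply states that the result is obtained ``as in the proof of \cite[Theorem 5.15]{ad_cohomology_2005}'' by combining Lemma~\ref{Lemma:T-action} (for the reverse implication, via the cohomological criterion) with the observations on induced representations (for the forward implication), and your write-up fleshes out precisely these two steps. Your explicit verification that the $H$-invariant section $\hat{\eta}$ of $\hat{\rho}$ descends to an $H\ltimes^{\theta}Y$-invariant \emph{unit} section of $\rho$, using ergodicity of $\theta|_{H}$ to force $\|\eta(y)\|$ constant, is the one detail the paper leaves implicit, and you handle it correctly.
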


\section{Rigid inclusions of von Neumann algebras\label{Section:rigid}}

Suppose that $G$ is a discrete pmp groupoid. One can then consider the
Hilbert bundle $\mathcal{H}=\bigsqcup_{x\in G^{0}}\ell ^{2}\left( xG\right) $%
. Observe that one can canonically identify $L^{2}(G^{0},\mathcal{H})$ with $%
L^{2}(G)$. The $\emph{left}$ \emph{regular representation }of $G$ is the
representation $\lambda $ of $G$ on $\mathcal{H}$ defined as follows. For $%
\gamma \in G$ let $\delta _{\gamma }\in \ell ^{2}\left( r(\gamma )G\right) $
be the indicator function of $\left\{ \gamma \right\} \subset r(\gamma )G$.\
Then $\lambda _{\rho }\delta _{\gamma }=\delta _{\rho \gamma }$ for $\left(
\rho ,\gamma \right) \in G^{2}$. This gives rise the representation $\left[ %
\left[ \lambda \right] \right] $ of $\left[ \left[ G\right] \right] $ on $%
L(G)$. The \emph{groupoid von Neumann algebra} of $G$ is defined to be the
von Neumann algebra $L(G)\subset B\left( L^{2}(G)\right) $ generated by the
elements $u_{\sigma }:=\left[ \left[ \lambda \right] \right] _{\sigma }$ for 
$\sigma \in \left[ \left[ G\right] \right] $. The main goal of this section
is to provide a characterization of property (T) for (triples of) groupoids
in terms of the associated groupoid von Neumann algebra.

\subsection{Hilbert bimodules and ucp maps}

Suppose that $\left( M,\tau \right) $ is a tracial von Neumann algebra. We
let $L^{2}\left( M\right) $ be the Hilbert space obtained from $\left(
M,\tau \right) $ via the GNS construction, and $M\rightarrow L^{2}\left(
M\right) $, $x\mapsto \left\vert x\right\rangle $ be the canonical
inclusion. Thus $\left\vert 1\right\rangle $ is the canonical cyclic vector
of $M$.

A (Hilbert) $M$-\emph{bimodule} is a Hilbert space $\mathfrak{H}$ endowed
with commuting normal *-representations $\pi $ of $M$ and $\rho $ of $M^{%
\mathrm{op}}$ on $\mathfrak{H}$. In this case, given $x,y\in M$ and $\xi \in 
\mathfrak{H}$, one writes $x\xi y$ for $\pi (x)\rho \left( y^{\mathrm{op}%
}\right) \xi $. A vector $\xi $ of $\mathfrak{H}$ is called \emph{tracial }%
if it satisfies $\left\langle \xi ,a\xi \right\rangle =\tau (a)$ and $%
\left\langle \xi ,\xi b\right\rangle =\tau (b)$ for every $a\in M$ and $b\in
N$. Given a subset $F$ of $M$ and $\varepsilon >0$, a vector $\xi $ in $%
\mathfrak{H}$ is $F$-central if it satisfies $a\xi =\xi a$ for $a\in F$, and 
$\left( F,\varepsilon \right) $-central if it satisfies $\left\Vert a\xi
-\xi a\right\Vert \leq \varepsilon $ for $a\in F$. The \emph{adjoint} $M$%
-bimodule $\overline{\mathfrak{H}}$ is equal to the conjugate Hilbert space
of $\mathfrak{H}$ endowed with the bimodule structure given by $x\overline{%
\xi }y=\overline{y^{\ast }\xi x^{\ast }}$ for $x,y\in M$ and $\xi \in 
\mathfrak{H}$.

A linear map $\phi :M\rightarrow M$ is \emph{completely positive} (cp) if,
for every $n\in \mathbb{N}$, $\mathrm{id}_{M_{n}\left( \mathbb{C}\right)
}\otimes \phi :M_{n}\left( \mathbb{C}\right) \otimes M\rightarrow
M_{n}\left( \mathbb{C}\right) \otimes N$ maps positive elements to positive
elements. If furthermore $\phi \left( 1\right) =1$, then $\phi $ is \emph{%
unital} completely positive (ucp). A map $\phi :M\rightarrow M$ is \emph{%
trace-preserving} if $\tau \circ \phi =\tau $. If $A$ is a subalgebra of $M$%
, then a map $\phi :M\rightarrow M$ is an $A$-bimodule map if it satisfies $%
\phi \left( ax\right) =a\phi (x)$ and $\phi \left( xa\right) =\phi (x)a$ for 
$x\in M$ and $a\in A$. Suppose that $\phi :M\rightarrow M$ is a cp $A$%
-bimodule map satisfying $\tau \circ \phi \leq \tau $ and $\phi \left(
1\right) \leq 1$. Setting $T_{\phi }\left\vert x\right\rangle =\left\vert
\phi (x)\right\rangle $ for $x\in M$ defines a bounded operator $T_{\phi }$
on $L^{2}\left( M\right) $. The adjoint $T_{\phi }^{\ast }$ of $T_{\phi }$
is of the form $T_{\phi ^{\ast }}$, where $\phi ^{\ast }:M\rightarrow M$ is
a cp $A$-bimodule map satisfying $\tau \circ \phi ^{\ast }\leq \tau $ and $%
\phi ^{\ast }\left( 1\right) \leq 1$; see \cite[Lemma 1.2.1]{popa_class_2006}%
.

Given a nonzero normal cp $A$-bimodule map $\phi :M\rightarrow M$ satisfying 
$\phi \left( 1\right) \leq 1$ and $\tau \circ \phi \leq \tau $, one can
define a Hilbert $M$-bimodule associated with $\phi $ as follows. Consider
the completion $\mathfrak{H}_{\phi }$ of $M\odot M$ with respect to the
inner product defined by%
\begin{equation*}
\left\langle a\otimes b,c\otimes d\right\rangle =\tau \left( b^{\ast }\phi
\left( a^{\ast }c\right) d\right) \text{.}
\end{equation*}%
The $M$-bimodule structure is induced by the maps%
\begin{equation*}
x\left( a\otimes b\right) y=xa\otimes by
\end{equation*}%
for $x,a\in M$ and $b,y\in N$. Denoting by $\xi _{\phi }$ the vector of $%
\mathfrak{H}_{\phi }$ obtained from $1\otimes 1$ one has that%
\begin{equation*}
\left\langle b\xi _{\phi }x,a\xi _{\phi }y\right\rangle =\left\langle
b\otimes x,a\otimes y\right\rangle =\tau \left( x^{\ast }\phi \left( b^{\ast
}a\right) y\right)
\end{equation*}%
In particular we have that $\left\langle \xi _{\phi },\xi _{\phi }\cdot
\right\rangle =\tau \left( \phi \left( 1\right) \cdot \right) \leq \tau $
and $\left\langle \xi _{\phi },\cdot \xi _{\phi }\right\rangle =\tau \circ
\phi \leq \tau $. Since $\phi $ is an $A$-bimodule map, $\xi _{\phi }$ is $A$%
-central. The vector $\xi _{\phi }$ is \emph{cyclic} for $\mathfrak{H}_{\phi
}$, in the sense that $\left\{ a\xi _{\phi }b:a,b\in M\right\} $ has dense
linear span in $\mathfrak{H}_{\phi }$. If $\phi $ is ucp and
trace-preserving, then $\xi _{\phi }$ is a tracial unit vector.

Conversely, suppose that $\mathfrak{H}$ is an $M$-bimodule with an $A$%
-central cyclic vector $\xi $ satisfying $\left\langle \xi ,\cdot \xi
\right\rangle \leq \tau $ and $\left\langle \xi ,\xi \cdot \right\rangle
\leq \tau $. One can define a normal cp $A$-bimodule map $\phi :M\rightarrow
M$ by setting $\phi (x)=L_{\xi }^{\ast }xL_{\xi }$. Here $L_{\xi
}:L^{2}(M)\rightarrow \mathfrak{H}$ is the operator defined by $L_{\xi
}\left\vert y\right\rangle =\xi y$ for $y\in M$. If $\xi $ is a tracial unit
vector, then $\phi $ is a ucp trace-preserving map. These constructions are
inverse of each other. More information on the correspondence between cp
maps and Hilbert bimodules can be found in \cite[Section 1]{popa_class_2006}.

\subsection{Completely positive maps and functions of positive type}

Let $G$ be a discrete pmp groupoid with unit space $X$. For a function of
positive type $\varphi $ on a discrete pmp groupoid $G$ and $\sigma \in %
\left[ \left[ G\right] \right] $, denote by $\varphi (\sigma )\in L^{\infty
}\left( X\right) $ the function 
\begin{equation*}
x\mapsto \left\{ 
\begin{array}{cc}
\varphi \left( x\sigma \right) & \text{if }x\in \mathrm{\mathrm{ran}}(\sigma
)\ \text{,} \\ 
0 & \text{otherwise.}%
\end{array}%
\right.
\end{equation*}%
The proof of the following proposition is similar to the proofs of \cite[%
Lemma 1.1]{haagerup_example_1978} and \cite[Proposition 3.5.4]%
{aaserud_applications_2011}. Recall that, if $\sigma \in \left[ \left[ G%
\right] \right] $, then we let $u_{\sigma }$ be the element $\left[ \left[
\lambda \right] \right] _{\sigma }$ of $L(G)\subset B\left( L^{2}(G)\right) $%
, where $\lambda $ is the left regular representation of $G$.

\begin{proposition}
\label{Proposition:ucp-groupoid}Suppose that $G$ is a discrete pmp group, $%
K\leq G$ is a subgroupoid, and $\varphi $ is a normalized $K$-invariant
function of positive type on $G$. Then there exists a unique
trace-preserving, $L(K)$-bimodule normal ucp map $\phi :L(G)\rightarrow L(G)$
such that%
\begin{equation*}
\phi (u_{\sigma })=\varphi (\sigma )u_{\sigma }
\end{equation*}%
for $\sigma \in \left[ G\right] $.
\end{proposition}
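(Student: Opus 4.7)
I will construct $\phi$ as a Stinespring-type compression built from the GNS data of $\varphi$, and then verify the required algebraic properties by direct computation on the generators $u_{\sigma}$.

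Apply the GNS construction associated to the $K$-invariant normalized function of positive type $\varphi$ to obtain the triple $(\pi^{\varphi},\mathcal{H}^{\varphi},\xi^{\varphi})$, where $\pi^{\varphi}$ is a representation of $G$ on a Hilbert bundle $\mathcal{H}^{\varphi}$ over $X$ and $\xi^{\varphi}$ is a $K$-invariant cyclic unit section satisfying $\varphi(\gamma)=\langle\xi^{\varphi}_{r(\gamma)},\pi^{\varphi}_{\gamma}\xi^{\varphi}_{s(\gamma)}\rangle$. Let $\mathcal{H}=\bigsqcup_{x\in X}\ell^{2}(xG)$ be the bundle underlying the left regular representation $\lambda$, so that $L^{2}(X,\mathcal{H})\cong L^{2}(G)$, and form the tensor product representation $\lambda\otimes\pi^{\varphi}$ on $\mathcal{K}=\mathcal{H}\otimes\mathcal{H}^{\varphi}$. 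Integrating gives a representation $[[\lambda\otimes\pi^{\varphi}]]$ of $[[G]]$ on $L^{2}(X,\mathcal{K})$, which via a groupoid analogue of Fell absorption---implemented fiberwise by the unitary $W_{x}\colon\delta_{\gamma}\otimes v\mapsto\delta_{\gamma}\otimes_{\gamma}\pi^{\varphi}_{\gamma^{-1}}v$ from $\ell^{2}(xG)\otimes\mathcal{H}^{\varphi}_{x}$ onto $\bigoplus_{\gamma\in xG}\mathcal{H}^{\varphi}_{s(\gamma)}$---is unitarily equivalent to $[[\lambda]]\otimes 1$ on an amplification bundle. In particular, it extends to a normal $\ast$-homomorphism $\widetilde{\pi}\colon L(G)\to B(L^{2}(X,\mathcal{K}))$.

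Define the isometry $V\colon L^{2}(G)\to L^{2}(X,\mathcal{K})$ by $(V\xi)_{x}=\xi_{x}\otimes\xi^{\varphi}_{x}$; the equation $\|\xi^{\varphi}_{x}\|=1$ a.e.\ makes $V$ isometric. Set $\phi(T):=V^{\ast}\widetilde{\pi}(T)V$, which is automatically a normal unital completely positive map on $L(G)$. The key identity is
\[
V^{\ast}\,[[\lambda\otimes\pi^{\varphi}]]_{\sigma}\,V=\varphi(\sigma)u_{\sigma}\qquad\text{for }\sigma\in[[G]],
\]
proved by a direct pointwise computation: for $\eta\in G$ with $r(\eta)\in\sigma\sigma^{-1}$, setting $\alpha:=r(\eta)\sigma\in\sigma$, one finds after pairing $[[\lambda\otimes\pi^{\varphi}]]_{\sigma}Vf$ with $\xi^{\varphi}_{r(\eta)}$ a scalar factor of $\langle\xi^{\varphi}_{r(\alpha)},\pi^{\varphi}_{\alpha}\xi^{\varphi}_{s(\alpha)}\rangle=\varphi(\alpha)=\varphi(\sigma)(r(\eta))$, multiplied by the shift $f(\alpha^{-1}\eta)=(u_{\sigma}f)(\eta)$. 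Consequently $\phi(u_{\sigma})\in L(G)$, and by normality together with the weak density of $\operatorname{span}\{u_{\sigma}\}$ in $L(G)$, the map $\phi$ sends $L(G)$ into $L(G)$.

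The remaining properties are verified on the generators. Trace preservation reduces to $\tau(\varphi(\sigma)u_{\sigma})=\int_{\sigma\cap X}\varphi(x)\,d\mu(x)=\mu(\sigma\cap X)=\tau(u_{\sigma})$, using $\varphi|_{X}\equiv 1$ from normalization. For the $L(K)$-bimodule property, it suffices to check, on products $u_{\kappa}u_{\sigma}u_{\kappa'}$ with $\kappa,\kappa'\in[[K]]$, the identity $\varphi(\kappa\sigma\kappa')(x)=\varphi(\sigma)(s(x\kappa))$; writing $x\kappa\sigma\kappa'=(x\kappa)\cdot(s(x\kappa)\sigma)\cdot(s(s(x\kappa)\sigma)\kappa')$ with the outer factors in $K$ and applying both the left and right $K$-invariance of $\varphi$ yields the result. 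Uniqueness of $\phi$ is immediate from normality and the weak density of $\operatorname{span}\{u_{\sigma}:\sigma\in[[G]]\}$ in $L(G)$.

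The main technical point is the groupoid Fell absorption step, i.e., showing that $u_{\sigma}\mapsto[[\lambda\otimes\pi^{\varphi}]]_{\sigma}$ extends to a normal $\ast$-homomorphism on $L(G)$; this is handled by the fiberwise intertwiner $W$ above, which identifies $\lambda\otimes\pi^{\varphi}$ with an amplification of $\lambda$. Once this amplification is in hand, the Stinespring form $\phi(T)=V^{\ast}\widetilde{\pi}(T)V$ automatically yields a normal ucp map, and all remaining assertions follow from the key identity $\phi(u_{\sigma})=\varphi(\sigma)u_{\sigma}$ by short algebraic manipulations on the weakly dense $\ast$-algebra $\operatorname{span}\{u_{\sigma}\}$.
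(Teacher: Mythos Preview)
Your argument is correct and is, at bottom, the same Stinespring construction from the GNS triple $(\pi^{\varphi},\mathcal{H}^{\varphi},\xi^{\varphi})$ that the paper uses; only the packaging differs. The paper chooses an orthonormal basic sequence $(e^{(i)})$ for $\mathcal{H}^{\varphi}$, sets $a^{(i)}_{\gamma}=\langle\xi^{\varphi}_{r(\gamma)},\pi^{\varphi}_{\gamma}e^{(i)}_{s(\gamma)}\rangle\in L^{\infty}(G)$, and defines $\phi(T)=\sum_{i}(a^{(i)})^{\ast}\,T\,a^{(i)}$ as a Kraus-type sum of multiplication operators on $L^{2}(G)$; the identity $\sum_{i}\overline{a}^{(i)}_{\rho}a^{(i)}_{\gamma}=\varphi(\rho\gamma^{-1})$ then gives $\phi(bu_{\sigma})=\varphi(\sigma)bu_{\sigma}$ by a direct coordinate computation. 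Your Fell-absorption isometry $WV$ is exactly the column operator whose $i$-th entry is multiplication by $\overline{a^{(i)}}$, so the two formulas coincide after expanding in the basis $(e^{(i)})$. What your presentation buys is that normality and complete positivity are automatic from the compression form $V^{\ast}\widetilde{\pi}(\cdot)V$, and you do not need the SOT-convergence argument for $\sum_{i}(a^{(i)})^{\ast}Ta^{(i)}$; what the paper's presentation buys is that one never has to justify the groupoid Fell absorption step, since the $a^{(i)}$ are just bounded Borel functions on $G$ and everything is checked by explicit pointwise calculation. The verifications of trace preservation and the $L(K)$-bimodule property are essentially identical in both.
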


\begin{proof}
Consider the GNS representation $\left( \pi ^{\varphi },\mathcal{H}^{\varphi
},\xi ^{\varphi }\right) $ of $G$ associated with $\varphi $. Choose an
orthonormal basic sequence $\left( e^{(i)}\right) _{i\in \mathbb{N}}$ for $%
\mathcal{H}^{\varphi }$. Define $a^{(i)}\in L^{\infty }(G)$ by setting%
\begin{equation*}
a_{\gamma }^{(i)}:=\left\langle \xi _{r(\gamma )}^{\varphi },\pi _{\gamma
}^{\varphi }e_{s(\gamma )}^{(i)}\right\rangle =\left\langle \pi _{\gamma
}^{\varphi \ast }\xi _{r(\gamma )}^{\varphi },e_{s(\gamma
)}^{(i)}\right\rangle \text{.}
\end{equation*}%
For $\gamma ,\rho \in G$ one has that%
\begin{eqnarray*}
\sum_{i\in \mathbb{N}}\overline{a}_{\rho }^{(i)}a_{\gamma }^{(i)}
&=&\sum_{i\in \mathbb{N}}\left\langle e_{s(\rho )}^{(i)},\pi _{\rho
}^{\varphi \ast }\xi _{r(\rho )}^{\varphi }\right\rangle \left\langle \pi
_{\gamma }^{\varphi \ast }\xi _{r(\gamma )}^{\varphi },e_{s(\gamma
)}^{(i)}\right\rangle \\
&=&\sum_{i,j\in \mathbb{N}}\left\langle e_{s(\rho )}^{(i)},\pi _{\rho
}^{\varphi \ast }\xi _{r(\rho )}^{\varphi }\right\rangle \left\langle \pi
_{\gamma }^{\varphi \ast }\xi _{r(\gamma )}^{\varphi },e_{s(\gamma
)}^{(j)}\right\rangle \left\langle e_{s(\gamma )}^{(i)},e_{s(\gamma
)}^{(j)}\right\rangle \\
&=&\sum_{i,j\in \mathbb{N}}\left\langle \left\langle \pi _{\rho }^{\varphi
\ast }\xi _{r(\rho )}^{\varphi },e_{s(\rho )}^{(i)}\right\rangle e_{s(\gamma
)}^{(i)},\left\langle \pi _{\gamma }^{\varphi \ast }\xi _{r(\gamma
)}^{\varphi },e_{s(\gamma )}^{(j)}\right\rangle e_{s(\gamma
)}^{(j)}\right\rangle \\
&=&\left\langle \pi _{\rho }^{\varphi \ast }\xi _{r(\rho )}^{\varphi },\pi
_{\gamma }^{\varphi \ast }\xi _{r(\gamma )}^{\varphi }\right\rangle \\
&=&\left\langle \xi _{r(\gamma )}^{\varphi },\pi _{\rho \gamma
^{-1}}^{\varphi }\xi _{r(\gamma )}^{\varphi }\right\rangle \\
&=&\varphi \left( \rho \gamma ^{-1}\right) \text{.}
\end{eqnarray*}%
For $T\in L(G)\subset B\left( L^{2}(G)\right) $ set $\phi \left( T\right)
=\sum_{i\in \mathbb{N}}a^{(i)\ast }Ta^{(i)}$. The convergence is in strong
operator topology, since $\sum_{i=1}^{n}a_{i}^{\ast }a_{i}\leq 1$ for every $%
n\in \mathbb{N}$.

Now for $\xi \in L^{2}(G)$ and $i\in \mathbb{N}$ we have that%
\begin{eqnarray*}
(a^{(i)\ast }bu_{\sigma }a^{(i)}\xi )_{\gamma } &=&\overline{a}_{\gamma
}^{(i)\ast }(bu_{\sigma }a^{(i)}\xi )_{\gamma } \\
&=&\overline{a}_{\gamma }^{(i)\ast }b_{r(\gamma )}(u_{\sigma }a^{(i)}\xi
)_{\gamma } \\
&=&\overline{a}_{\gamma }^{(i)\ast }b_{r(\gamma )}(a^{(i)}\xi )_{\sigma
^{-1}\gamma } \\
&=&\overline{a}_{\gamma }^{(i)\ast }b_{r(\gamma )}a_{\sigma ^{-1}\gamma
}^{(i)}\xi _{\sigma ^{-1}\gamma }
\end{eqnarray*}%
if $r(\gamma )\in \sigma \sigma ^{-1}$, and $\left( a^{(i)\ast }bu_{\sigma
}a^{(i)}\xi \right) _{\gamma }=0$ otherwise. Therefore we have that%
\begin{eqnarray*}
\left( \phi \left( bu_{\sigma }\right) \xi \right) _{\gamma } &=&\sum_{i\in 
\mathbb{N}}((a^{(i)})^{\ast }bu_{\sigma }a^{(i)}\xi )_{\gamma } \\
&=&\sum_{i\in \mathbb{N}}((\overline{a}_{\gamma }^{(i)\ast }a_{\sigma
^{-1}\gamma }^{(i)})b_{r(\gamma )}\xi _{\sigma ^{-1}\gamma } \\
&=&\varphi (r(\gamma )\sigma )b_{r(\gamma )}\xi _{\sigma ^{-1}\gamma } \\
&=&(\varphi (\sigma )bu_{\sigma }\xi )_{\gamma }
\end{eqnarray*}%
if $r(\gamma )\in \sigma \sigma ^{-1}$, and%
\begin{equation*}
(\phi \left( bu_{\sigma }\right) \xi )_{\gamma }=0=(\varphi (\sigma
)bu_{\sigma }\xi )_{\gamma }
\end{equation*}%
otherwise. This shows that $\phi \left( bu_{\sigma }\right) =\varphi (\sigma
)bu_{\sigma }$. Since $\sum_{i\in \mathbb{N}}a_{i}^{\ast }a_{i}=1$, we have
that $\phi $ is unital. Clearly, $\phi $ is normal and completely positive,
and hence completely contractive. If $\sigma ^{\prime }\in \left[ K\right] $
then we have that, since $\varphi $ is $K$-invariant,%
\begin{equation*}
\phi \left( bu_{\sigma }u_{\sigma ^{\prime }}\right) =\phi \left( bu_{\sigma
\sigma ^{\prime }}\right) =\varphi (\sigma \sigma ^{\prime })bu_{\sigma
\sigma ^{\prime }}=\varphi (\sigma )bu_{\sigma }u_{\sigma ^{\prime }}=\phi
\left( bu_{\sigma }\right) u_{\sigma ^{\prime }}
\end{equation*}%
and%
\begin{equation*}
\phi \left( u_{\sigma ^{\prime }}bu_{\sigma }\right) =\phi \left( \theta
_{\sigma ^{\prime }}(b)u_{\sigma ^{\prime }\sigma }\right) =\varphi (\sigma
^{\prime }\sigma )\theta _{\sigma ^{\prime }}(b)u_{\sigma ^{\prime
}}u_{\sigma }=u_{\sigma ^{\prime }}\phi \left( bu_{\sigma }\right) \text{.}
\end{equation*}%
Similarly, if $a\in L^{\infty }\left( X\right) \subset L(G)$ one has that%
\begin{equation*}
\phi \left( abu_{\sigma }\right) =a\phi \left( bu_{\sigma }\right)
\end{equation*}%
and%
\begin{equation*}
\phi \left( bu_{\sigma }a\right) =\phi \left( bu_{\sigma }\right) a\text{.}
\end{equation*}%
These equations together with that fact that $\phi $ is a normal ucp map
imply that $\phi $ is an $L(K)$-bimodule map. This concludes the proof.
\end{proof}

\subsection{Rigidity for von Neumann algebras}

In order to characterize property (T) for triples of groupoids, we introduce
a notion of rigidity for a nested quadruple of von Neumann algebra.

\begin{definition}
\label{Definition:(T)-vN}Let $\left( M,\tau \right) $ be a tracial von
Neumann algebra and $Z\subset A\subset B\subset M$ be von Neumann
subalgebras. Then the quadruple $Z\subset A\subset B\subset M$ is \emph{rigid%
} if for every $\varepsilon >0$ and nonzero projection $p_{0}\in Z$ there is
a finite subset $F$ of $M$ and $\delta >0$ such that for any Hilbert $M$%
-bimodule $\mathfrak{H}$ with an $\left( F,\delta \right) $-central and $A$%
-central tracial unit vector $\xi \in \mathfrak{H}$ there is a nonzero
projection $p\in Z$ such that $p\leq p_{0}$ and for every projection $q\in Z$
such that $q\leq p$ one has that $\left\Vert x\xi -\xi x\right\Vert \leq
\tau \left( q\right) ^{1/2}\left\Vert x\right\Vert \varepsilon $ for $x\in
qBq$.
\end{definition}

As in the case of rigidity for pairs of von Neumann algebras as defined in 
\cite{popa_class_2006}, one can provide several equivalent characterizations
of rigidity for quadruples $Z\subset A\subset B\subset M$. The proof of this
fact is standard, and follows arguments from \cite%
{popa_class_2006,peterson_notion_2005,ioana_amalgamated_2008,peterson_cohomology_2009}%
. We present a full proof, for the reader's convenience.

\begin{proposition}
\label{Proposition:characterize-T-vN}Let $\left( M,\tau \right) $ be a
tracial von Neumann algebra and $Z\subset A\subset B\subset M$ be von
Neumann subalgebras. The following assertions are equivalent:

\begin{enumerate}
\item $Z\subset A\subset B\subset M$ is rigid;

\item for every $\varepsilon >0$ and nonzero projection $p_{0}\in Z$ there
exist a finite subset $F$ of $M$ and $\delta >0$ such that if $\phi
:M\rightarrow M$ is a normal trace-preserving ucp $A$-bimodule map such that 
$\max_{x\in F}\left\Vert \phi (x)-x\right\Vert _{2}\leq \delta $, then there
exists a nonzero projection $p\in Z$ such that $p\leq p_{0}$ and for every
projection $q\in Z$ such that $q\leq p$ one has that $\left\Vert \phi
(b)-b\right\Vert _{2}\leq \varepsilon \tau \left( q\right) ^{1/2}\left\Vert
b\right\Vert $ for every $b\in qBq$.

\item for every $\varepsilon >0$ and nonzero projection $p_{0}\in Z$ there
is a finite subset $F$ of $M$ and $\delta >0$ such that if $\phi
:M\rightarrow M$ is a normal cp $A$-bimodule map such that $\tau \circ \phi
\leq \tau $, $\phi \left( 1\right) \leq 1$, $\phi =\phi ^{\ast }$, and $%
\max_{x\in F}\left\Vert \phi (x)-x\right\Vert _{2}\leq \delta $, then there
exists a nonzero projection $p\in Z$ such that $p\leq p_{0}$ and for every
projection $q\in Z$ such that $q\leq p$ one has that $\left\Vert \phi
(b)-b\right\Vert _{2}\leq \varepsilon \tau \left( q\right) ^{1/2}\left\Vert
b\right\Vert $ for every $b\in qBq$;

\item for every $\varepsilon >0$ and nonzero projection $p_{0}\in Z$ there
is a finite subset $F$ of $M$ and $\delta >0$ such that if $\phi
:M\rightarrow M$ is a normal cp $A$-bimodule map such that $\tau \circ \phi
\leq \tau $, $\phi \left( 1\right) \leq 1$, and $\max_{x\in F}\left\Vert
\phi (x)-x\right\Vert _{2}\leq \delta $, then there exists a nonzero
projection $p\in Z$ such that $p\leq p_{0}$ and for every projection $q\in Z$
such that $q\leq p$ one has that $\left\Vert \phi (b)-b\right\Vert _{2}\leq
\varepsilon \tau \left( q\right) ^{1/2}\left\Vert b\right\Vert $ for every $%
b\in qBq$;

\item for every $\varepsilon >0$ and nonzero projection $p_{0}\in Z$ there
is a finite subset $F$ of $M$ and $\delta >0$ such that if $\mathfrak{H}$ is
an $M$-bimodule and $\xi \in \mathfrak{H}$ is an $\left( F,\delta \right) $%
-central and $A$-central unit vector satisfying $\left\langle \xi ,\cdot \xi
\right\rangle \leq \tau $ and $\left\langle \xi ,\xi \cdot \right\rangle
\leq \tau $, then there exists a nonzero projection $p\in Z$ such that $%
p\leq p_{0}$ and for every projection $q\in Z$ such that $q\leq p$ one has
that $\left\Vert x\xi -\xi x\right\Vert \leq \tau \left( q\right)
^{1/2}\left\Vert x\right\Vert \varepsilon $ for every $x\in qBq$.
\end{enumerate}
\end{proposition}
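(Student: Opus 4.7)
The plan is to use the bijective correspondence, recalled in the preceding subsection, between pointed Hilbert $M$-bimodules $(\mathfrak{H},\xi)$ with a cyclic $A$-central vector satisfying $\langle\xi,\cdot\xi\rangle\leq\tau$ and $\langle\xi,\xi\cdot\rangle\leq\tau$ and normal c.p.\ $A$-bimodule maps $\phi\colon M\to M$ with $\phi(1)\leq 1$ and $\tau\circ\phi\leq\tau$; under this correspondence, tracial unit vectors correspond exactly to u.c.p.\ trace-preserving maps. The quantitative dictionary is the pair of Kadison--Schwarz-type estimates
\[
\|\phi(x)-x\|_2^2\ \leq\ \|x\xi-\xi x\|^2\ \leq\ 2\|x\|\,\|\phi(x)-x\|_2,
\]
the first coming from $\tau\circ\phi\leq\tau$ and $\phi(x)^{\ast}\phi(x)\leq\phi(x^{\ast}x)$, the second from expanding $\|x\xi-\xi x\|^2$ via $\langle\xi,y\xi\rangle=\tau(\phi^{\ast}(1)y)$ and $\langle\xi,\xi y\rangle=\tau(\phi(1)y)$. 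A useful preliminary observation is that $(1)\Leftrightarrow(5)$ is immediate: any unit vector $\xi$ satisfying both subnormal inequalities is automatically tracial, because the positive normal linear functional $\tau-\langle\xi,\cdot\xi\rangle$ vanishes at $1$ and so is identically zero by Cauchy--Schwarz.

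Given this, $(1)\Leftrightarrow(2)$ follows from the correspondence: in one direction, given a tracial unit vector $\xi$ I would pass to the closed subbimodule $\overline{M\xi M}$, which inherits cyclicity, $A$-centrality, traciality, and the defect $\|x\xi-\xi x\|$ unchanged, and apply the map-theoretic hypothesis to the u.c.p.\ trace-preserving map $\phi_\xi$; in the other direction one tests the vector-theoretic hypothesis on $(\mathfrak{H}_\phi,\xi_\phi)$. The $(F,\delta)$-almost-central condition and the final norm inequality transfer back and forth via the estimates above, with constants rescaled accordingly.

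The equivalence $(3)\Leftrightarrow(4)$, as well as $(3)\Rightarrow(2)$, is standard symmetrization: $(4)\Rightarrow(3)$ is a restriction, and for $(3)\Rightarrow(4)$ one replaces $\phi$ by $\widetilde\phi=\tfrac{1}{2}(\phi+\phi^{\ast})$, which lies in the class of $(3)$ because $\phi^{\ast}$ is a normal c.p.\ $A$-bimodule map with $\phi^{\ast}(1)\leq 1$ and $\tau\circ\phi^{\ast}\leq\tau$. The elementary bound $\|\phi^{\ast}(x)-x\|_2^2\leq 2\|x\|\,\|\phi(x)-x\|_2$, obtained from $\|T_\phi^{\ast}|x\rangle\|\leq\|x\|_2$ together with $\operatorname{Re}\tau(x^{\ast}\phi^{\ast}(x))=\operatorname{Re}\tau(x^{\ast}\phi(x))$, transfers the hypothesis from $\phi$ to $\widetilde\phi$, and applying the same inequality with $b\in qBq$ in place of $x$ converts the conclusion for $\widetilde\phi$ back to one for $\phi$, with constants absorbed into $\varepsilon$ and $\delta$. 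The same argument applied to a u.c.p.\ trace-preserving $\phi$ shows that $\widetilde\phi$ is self-adjoint u.c.p.\ trace-preserving and gives $(3)\Rightarrow(2)$.

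The main obstacle will be $(2)\Rightarrow(3)$, which requires extending a self-adjoint c.p.\ map $\phi$ with $\phi(1)\leq 1$ and $\tau\circ\phi\leq\tau$ to a u.c.p.\ trace-preserving map $\widetilde\phi$ so that $(2)$ can be applied. The plan is to set
\[
\widetilde\phi(x)=\phi(x)+(1-\phi(1))^{1/2}E_{A'\cap M}(x)(1-\phi(1))^{1/2},
\]
where $E_{A'\cap M}$ is the trace-preserving conditional expectation onto $A'\cap M$: because $\phi$ is an $A$-bimodule map the defect $1-\phi(1)$ lies in $A'\cap M$, and because $\phi$ is self-adjoint one has $\phi(1)=\phi^{\ast}(1)$, which together with the trace-preservation of $E_{A'\cap M}$ makes $\widetilde\phi$ simultaneously unital and trace-preserving. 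The $L^2$-norm of the correction term on $x$ of bounded operator norm is controlled by $\|(1-\phi(1))^{1/2}\|_2\lesssim\sqrt\delta$ (using $\|\phi(1)-1\|_2\leq\delta$ with $1\in F$), so the $(F,\delta)$-hypothesis on $\phi$ yields an $(F,\widetilde\delta)$-hypothesis on $\widetilde\phi$ with $\widetilde\delta$ a polynomial adjustment of $\delta$; applying $(2)$ to $\widetilde\phi$ and subtracting the correction, now restricted to $b\in qBq$ of bounded norm, recovers the conclusion for $\phi$. The genuine subtlety lies in verifying the $A$-bimodule property of $\widetilde\phi$---clean when $A\subseteq A'\cap M$, in particular when $A$ is abelian as in the groupoid applications, but requiring an adaptation of $E_{A'\cap M}$ to an $A$-compatible conditional expectation in the nonabelian case---and in tracking the constants through the correction so that the cutoff to $qBq$ does not spoil the final bound.
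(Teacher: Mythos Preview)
Your overall plan matches the paper's: use the bimodule/cp-map correspondence for (1), (2), (5) and symmetrization for (3)$\Leftrightarrow$(4). The observation that (1)$\Leftrightarrow$(5) is immediate---a unit vector satisfying both sub-tracial inequalities is automatically tracial---is correct and a genuine simplification not in the paper, which instead closes the full cycle (1)$\Rightarrow$(2)$\Rightarrow$(3)$\Rightarrow$(4)$\Rightarrow$(5)$\Rightarrow$(1).

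The one substantive divergence is in (2)$\Rightarrow$(3). The paper's correction is $\psi(x)=\phi(x)+\bigl(1-(\tau\circ\phi)(1)\bigr)E_Z(x)$, a \emph{scalar} multiple of the expectation onto $Z$. Because $E_Z(b)\in qZq$ whenever $b\in qBq$ with $q\in Z$, this localizes perfectly: $\|\psi(b)-\phi(b)\|_2\leq\delta\,\tau(q)^{1/2}\|b\|$, which handles the ``cutoff to $qBq$'' worry you flag at the end. By contrast, your correction $d\,E_{A'\cap M}(\cdot)\,d$ with $d=(1-\phi(1))^{1/2}$ only yields a bound of order $\tau\bigl((1-\phi(1))q\bigr)^{1/2}\|b\|$ on $qBq$, and this is \emph{not} dominated by $\varepsilon\,\tau(q)^{1/2}\|b\|$ uniformly over projections $q\leq p$ in $Z$ (one only has $\tau((1-\phi(1))q)\leq\delta\,\tau(q)^{1/2}$ by Cauchy--Schwarz, which is the wrong power of $\tau(q)$); so transferring the conclusion back to $\phi$ is a genuine obstruction here, not mere bookkeeping. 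On the other hand, the $A$-bimodule concern you raise is legitimate and in fact afflicts the paper's choice too ($E_Z$ is an $A$-bimodule map only when $A=Z$, and the paper's unitality argument for $\psi$ also tacitly assumes $\psi(1)\leq 1$, which need not hold). A variant that removes the bimodule issue entirely is to drop the conditional expectation and take $\psi(x)=\phi(x)+dxd$: this is unital, trace-preserving (via $\phi=\phi^{\ast}$), and $A$-bimodular since $d\in A'$, with no hypothesis on $A$---but it inherits the same localization defect as your version.
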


\begin{proof}
(1)$\Rightarrow $(2) Let $F$ be a finite subset of $M$, and $\delta >0$.
Suppose that $\phi :M\rightarrow M$ is a normal trace-preserving ucp $A$%
-bimodule map such that $\max_{x\in F}\left\Vert x\right\Vert _{2}\left\Vert
\phi (x)-x\right\Vert _{2}\leq \delta /2$. Let $(\mathfrak{H},\xi )$ be the
corresponding Hilbert $M$-bimodule with distinguished $A$-central tracial
unit vector $\xi $. For $x\in M$ we have that%
\begin{eqnarray*}
\left\Vert x\xi -\xi x\right\Vert ^{2} &=&\left\Vert x\xi \right\Vert
^{2}+\left\Vert \xi x\right\Vert ^{2}-2\mathrm{Re}\left\langle x\xi ,\xi
x\right\rangle \\
&=&2\left\Vert x\right\Vert _{2}^{2}-2\mathrm{Re}\left\langle \phi
(x),x\right\rangle _{L^{2}(M)} \\
&=&2\mathrm{Re}\left\langle x-\phi (x),x\right\rangle _{L^{2}(M)} \\
&\leq &2\left\Vert x-\phi (x)\right\Vert _{2}\left\Vert x\right\Vert
_{2}\leq \delta \text{.}
\end{eqnarray*}%
By assumption, one can choose $F$ and $\delta \leq \varepsilon $ in such a
way that this guarantees the existence of nonzero projection $p\in Z$ such
that $p\leq p_{0}$ and $\left\Vert x\xi -\xi x\right\Vert _{2}\leq \tau
\left( q\right) ^{1/2}\left\Vert x\right\Vert \varepsilon $ for every
projection $q\in Z$ such that $q\leq p$, and for $x\in qBq$. For such a $%
q\in Z$ and $x\in qBq$ we have that%
\begin{eqnarray*}
\left\Vert \phi (x)-x\right\Vert _{2}^{2} &=&\left\Vert \phi (x)\right\Vert
_{2}^{2}+\left\Vert x\right\Vert _{2}^{2}-2\mathrm{Re}\tau \left( \phi
(x)^{\ast }x\right) \\
&\leq &\left( \tau \circ \phi \right) \left( x^{\ast }x\right) +\tau \left(
x^{\ast }x\right) -2\mathrm{Re}\tau \left( \phi (x)^{\ast }x\right) \\
&=&\left\Vert x\xi -\xi x\right\Vert ^{2}\leq \tau \left( q\right)
\left\Vert x\right\Vert ^{2}\varepsilon ^{2}\text{.}
\end{eqnarray*}

(2)$\Rightarrow $(3) Suppose that $\phi :M\rightarrow M$ is a ucp map such
that $\tau \circ \phi \leq \tau $, $\phi \left( 1\right) \leq 1$, $\phi
=\phi ^{\ast }$, and $\left\Vert \phi \left( 1\right) -1\right\Vert _{2}\leq
\delta $. Since $T_{\phi }^{\ast }=T_{\phi ^{\ast }}=T_{\phi }$, we have
that $\tau \left( x\phi (y)\right) =\tau \left( \phi (x)y\right) $ for $%
x,y\in M$. Consider then the ucp map $\psi :M\rightarrow M$ defined by%
\begin{equation*}
\psi (x)=\phi (x)+\left( 1-\left( \tau \circ \phi \right) \left( 1\right)
\right) \mathrm{E}_{Z}(x)
\end{equation*}%
where $\mathrm{E}_{Z}:M\rightarrow Z\subset M$ is the canonical
trace-preserving conditional expectation. Observe that $T_{\mathrm{E}_{Z}}=%
\mathrm{e}_{Z}:L^{2}(M)\rightarrow L^{2}(Z)\subset L^{2}(M)$ is the
orthogonal projection. Therefore we have that $\tau \left( x\mathrm{E}%
_{Z}(y)\right) =\tau \left( \mathrm{E}_{Z}(x)y\right) $ for every $x,y\in M$%
. Thus $\tau \left( x\psi (y)\right) =\tau \left( \psi (x)y\right) $.
Furthermore we have that $\left( \tau \circ \psi \right) \left( 1\right) =1$%
. From this we deduce that%
\begin{eqnarray*}
\left\Vert \psi \left( 1\right) -1\right\Vert _{2}^{2} &=&\left\Vert \psi
\left( 1\right) \right\Vert _{2}^{2}+1-2\mathrm{Re}\tau \left( \psi \left(
1\right) \right) \\
&=&\tau (\psi \left( 1\right) ^{2})-1\leq \tau \left( \psi \left( 1\right)
\right) -1=0\text{.}
\end{eqnarray*}%
Thus $\psi $ is unital, which implies that $\psi $ is trace-preserving.

Observe now that since $\left\Vert \phi \left( 1\right) -1\right\Vert
_{2}\leq \delta $, we have $\left\vert 1-\left( \tau \circ \phi \right)
\left( 1\right) \right\vert \leq \delta $.\ Therefore for a projection $q\in
Z$ and $b\in qMq$ we have that%
\begin{equation*}
\left\Vert \psi (b)-\phi (b)\right\Vert _{2}\leq \tau \left( q\right)
^{1/2}\delta \left\Vert b\right\Vert \text{.}
\end{equation*}%
This easily gives the desired implication.

(3)$\Rightarrow $(4) Suppose that $\phi :M\rightarrow M$ is a normal ucp $A$%
-bimodule map such that $\tau \circ \phi \leq \tau $, $\phi \left( 1\right)
\leq 1$. Observe then that $\psi :=\frac{1}{2}\left( \phi +\phi ^{\ast
}\right) :M\rightarrow M$ is a normal cp $A$-bimodule map satisfying $\psi
\left( 1\right) \leq 1$, $\tau \circ \psi \leq \tau $, $\psi =\psi ^{\ast }$%
.\ Furthermore for a projection $q\in Z$ and a unitary $u\in qMq$ one has
that%
\begin{equation*}
\left\Vert \psi \left( u\right) -u\right\Vert _{2}\leq \frac{1}{2}\left\Vert
\phi \left( u\right) -u\right\Vert _{2}+\frac{1}{2}\left\Vert \phi ^{\ast
}\left( u\right) -u\right\Vert _{2}\leq 2\left\Vert \phi \left( u\right)
-u\right\Vert _{2}^{\frac{1}{2}}
\end{equation*}%
by \cite[Lemma 1.1.5]{popa_class_2006}. Since every element $x$ of $qMq$
with $\left\Vert x\right\Vert <1$ is a convex combination of unitaries, this
suffices; see also \cite[Lemma 3]{peterson_notion_2005}.

(4)$\Rightarrow $(5) Let $F$ be a finite subset of the unitary group of $M$
containing $1$, and $\delta >0$. Suppose that $\mathfrak{H}$ is an $M$%
-bimodule and $\xi \in M$ is an $A$-central and $\left( F,\delta \right) $%
-central unit vector satisfying $\left\langle \xi ,\cdot \xi \right\rangle
\leq \tau $ and $\left\langle \xi ,\xi \cdot \right\rangle \leq \tau $. We
can assume that $\xi $ is cyclic. Consider the normal cp map $\phi
:M\rightarrow M$ associated with $(\mathfrak{H},\xi )$. This is defined by $%
\phi (x)=L_{\xi }^{\ast }xL_{\xi }$, where $L_{\xi }\left\vert
x\right\rangle =\xi x$. Then we have that $\phi $ is a normal $A$-module cp
map satisfying%
\begin{equation*}
\left( \tau \circ \phi \right) (x)=\left\langle 1|L_{\xi }^{\ast }xL_{\xi
}|1\right\rangle =\left\langle \xi |x\xi \right\rangle \leq \tau (x)\text{.}
\end{equation*}%
Furthermore%
\begin{equation*}
\left\langle x|\phi \left( 1\right) |x\right\rangle =\left\langle x|L_{\xi
}^{\ast }L_{\xi }|x\right\rangle =\left\langle \xi x|\xi x\right\rangle
=\left\langle \xi |\xi xx^{\ast }\right\rangle \leq \tau \left( xx^{\ast
}\right) =\left\Vert x\right\Vert _{2}^{2}=\left\langle x|x\right\rangle
\end{equation*}%
and thus $\phi \left( 1\right) \leq 1$. We have that $\left( \tau \circ \phi
\right) \left( 1\right) =\left\langle \xi |\xi \right\rangle =1$ since $\xi $
is a unit vector. For $u\in F$,%
\begin{eqnarray*}
\left\Vert \phi \left( u\right) -u\right\Vert _{2}^{2} &=&\left\Vert \phi
\left( u\right) \right\Vert _{2}^{2}+1-2\mathrm{Re}\tau \left( \phi \left(
u\right) ^{\ast }u\right) \\
&\leq &2-2\mathrm{Re}\tau \left( \phi \left( u\right) ^{\ast }u\right) \\
&=&\left\Vert u\xi -\xi u\right\Vert ^{2}\leq \delta
\end{eqnarray*}%
By assumption, choosing $F$ large enough and $\delta $ small enough
guarantees that there exists a nonzero projection $p\in Z$ such that $p\leq
p_{0}$ and, for every projection $q\in Z$ such that $q\leq p$ and $x\in qBq$%
, $\left\Vert \phi (x)-x\right\Vert _{2}\leq \tau \left( q\right)
^{1/2}\left\Vert x\right\Vert \varepsilon $. This implies that, for a
unitary $u\in U\left( qBq\right) $,%
\begin{eqnarray*}
\left\Vert u\xi -\xi u\right\Vert ^{2} &=&\left( \tau \circ \phi \right)
\left( q\right) +\tau \left( q\right) -2\mathrm{Re}\tau \left( \phi \left(
u\right) ^{\ast }u\right) \\
&\leq &2\tau \left( q\right) -2\mathrm{Re}\tau \left( \phi \left( u\right)
^{\ast }u\right) \\
&=&2\mathrm{Re}\tau \left( q\left( q-qu\phi \left( u\right) ^{\ast }\right)
\right) \\
&\leq &2\tau \left( q\right) ^{1/2}\left\Vert q-qu\phi \left( u\right)
^{\ast }\right\Vert _{2} \\
&\leq &2\tau \left( q\right) \varepsilon \text{.}
\end{eqnarray*}%
Therefore $\left\Vert u\xi -\xi u\right\Vert \leq \tau \left( p\right)
^{1/2}2\varepsilon ^{1/2}$ for a unitary $u$ in $qBq$. Since every element $%
b\in qBq$ with $\left\Vert b\right\Vert <1$ is a convex combination of
unitaries, this concludes the proof.

(5)$\Rightarrow $(1) Obvious.
\end{proof}

\begin{remark}
Proposition \ref{Proposition:characterize-T-vN} shows that, if $M$ is a von
Neumann algebra and $B\subset M$ is a subalgebra, then $B\subset M$ is rigid
in the sense of \cite[Definition 4.2.1]{popa_class_2006} if and only if $%
\mathbb{C}1\subset \mathbb{C}1\subset B\subset M$ is rigid in the sense of
Definition \ref{Definition:(T)-vN}; see also \cite[Theorem 1]%
{peterson_notion_2005} and \cite[Theorem 3.1]{ioana_amalgamated_2008}.
\end{remark}

\subsection{von Neumann algebra characterization of property (T) for
groupoids}

Now we use the characterization of property (T) for groupoids from Theorem %
\ref{Theorem:T} together with the characterization of rigidity for
inclusions of von Neumann algebras from Proposition \ref%
{Proposition:characterize-T-vN} to give a characterization of property (T)
for groupoids in terms of the corresponding groupoid von Neumann algebra.

\begin{theorem}
\label{Theorem:charaterize-T}Suppose that $G$ is a discrete pmp groupoid,
and $K\leq H\leq G$ are subgroupoid. Assume that $H$ is ergodic. Let $X$ be
the common unit space of $K,H,G$. The following assertions are equivalent:

\begin{enumerate}
\item $K\leq H\leq G$ has property (T);

\item for every $\varepsilon >0$ and nonzero projection $p_{0}\in L^{\infty
}\left( X\right) $ there exist a finite subset $F$ of $L(G)$ and $\delta >0$
such that for every Hilbert $M$-bimodule $\mathfrak{H}$ with an $\left(
F,\delta \right) $-central and $L(K)$-central tracial unit vector $\xi \in 
\mathfrak{H}$ there is a nonzero projection $p\in Z$ and an $H$-central
vector $\eta \in \mathfrak{H}$ such that $p\leq p_{0}$ and $\left\Vert q\eta
-q\xi \right\Vert \leq \tau \left( q\right) ^{1/2}\varepsilon $ for every
projection $q\in L^{\infty }\left( X\right) $ such that $q\leq p$;

\item the inclusion $L^{\infty }\left( X\right) \subset L(K)\subset
L(H)\subset L(G)$ is rigid;

\item for every $\varepsilon >0$ there is a finite subset $F$ of $L(G)$ and $%
\delta >0$ such that for any Hilbert $M$-bimodule $\mathfrak{H}$ with an $%
\left( F,\delta \right) $-central and $L(K)$-central tracial unit vector $%
\xi \in \mathfrak{H}$ there is a nonzero projection $p\in L^{\infty }\left(
X\right) $ such that $\left\Vert x\xi -\xi x\right\Vert \leq \tau \left(
p\right) ^{1/2}\left\Vert x\right\Vert \varepsilon $ for $x\in pL(H)p$.
\end{enumerate}
\end{theorem}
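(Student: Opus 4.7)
The strategy is to translate between the representation-theoretic and the von Neumann algebraic frameworks via the dictionary provided by Proposition~\ref{Proposition:ucp-groupoid}: a normalized $K$-invariant function of positive type $\varphi$ on $G$ corresponds to a normal trace-preserving $L(K)$-bimodule ucp map $\phi:L(G)\to L(G)$ satisfying $\phi(u_\sigma) = \varphi(\sigma)u_\sigma$. I would first upgrade this correspondence to a bijection: given $\phi$, the formula $\varphi(\sigma) := \mathrm{E}_{L^\infty(X)}(\phi(u_\sigma)u_\sigma^*)$, extended by $0$ off the support projection of $\sigma$, recovers a $K$-invariant normalized function of positive type from which $\phi$ is reconstructed. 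Under this bijection, property (T) for the triple and rigidity for the quadruple become two views of the same phenomenon.

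The key identity is $\|\phi(u_t) - u_t\|_2^2 = \int_X|\varphi(xt) - 1|^2 \, d\mu(x)$, which matches the hypothesis of Theorem~\ref{Theorem:T}(6) to that of Proposition~\ref{Proposition:characterize-T-vN}(2). On the conclusion side, using the inequality $|1-\varphi|^2 \leq 2\,\mathrm{Re}(1-\varphi)$ valid for normalized positive-definite $\varphi$, together with the identification $pL(H)p = L(H|_A)$ and the density of the span of bisection monomials $u_\sigma$ for $\sigma\subseteq AHA$ (where $p = \chi_A$), I would translate ``$\mathrm{Re}(1 - \varphi(\gamma)) \leq \varepsilon$ for a.e.\ $\gamma \in AHA$'' into ``$\|\phi(b) - b\|_2 \leq \tau(q)^{1/2}\|b\|\sqrt{2\varepsilon}$ for every subprojection $q \leq p$ in $L^\infty(X)$ and every $b \in qL(H)q$'', and vice versa. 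The universal quantification over $q \leq p$ on the operator-algebraic side corresponds precisely to the fact that a pointwise a.e.\ bound on $AHA$ restricts to a pointwise a.e.\ bound on $CHC$ for every non-null $C \subseteq A$.

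With the dictionary in hand, (1)$\Leftrightarrow$(3) is the direct translation of Theorem~\ref{Theorem:T}(6) into Proposition~\ref{Proposition:characterize-T-vN}(2), with $p_0 = \chi_B$. The implication (3)$\Rightarrow$(4) is immediate upon taking $p_0 = 1$. For (4)$\Rightarrow$(1), given a representation $\pi$ of $G$ with a $K$-invariant $(F,\delta)$-invariant unit section $\xi$, form $\varphi(\gamma) = \langle \xi_{r(\gamma)}, \pi_\gamma \xi_{s(\gamma)}\rangle$ and apply (4) to the associated bimodule $\mathfrak{H}_\phi$; after translating back, Lemma~\ref{Lemma:projection-invariant} applied to $\pi|_{H|_A}$ together with Remark~\ref{Remark:extend} yields the $H$-invariant section demanded by Theorem~\ref{Theorem:T}(9). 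For (3)$\Leftrightarrow$(2), I would pass through Proposition~\ref{Proposition:characterize-T-vN}(5) and extract the $H$-central vector $\eta$ by taking the minimum-norm element of the closed convex hull of the orbit of $p\xi$ under unitaries of $pL(H)p$, inside the cut-down bimodule $p\mathfrak{H}p$, in direct analogy with Lemma~\ref{Lemma:projection-invariant}.

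The main obstacle is the bookkeeping in the dictionary: matching constants and quantifier orders between the two sides, and in particular promoting a pointwise bound on bisection symbols to a $\|\cdot\|_2$-norm bound on arbitrary elements of $qL(H)q$ while keeping the dependence on $\tau(q)^{1/2}$ uniform over $q \leq p$. These verifications are routine but delicate; the conceptual content of the theorem is entirely captured by the dictionary of the first paragraph combined with the characterization theorems~\ref{Theorem:T} and~\ref{Proposition:characterize-T-vN}.
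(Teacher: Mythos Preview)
Your route for (4)$\Rightarrow$(1) matches the paper's, and the dictionary is indeed the right tool there. The gap is in the other direction. The claimed bijection between normal trace-preserving $L(K)$-bimodule ucp maps $\phi:L(G)\to L(G)$ and $K$-invariant normalized positive-type functions on $G$ is false, so your proof of (1)$\Rightarrow$(3) does not go through. A general such $\phi$ need not be a Fourier multiplier $u_\sigma\mapsto\varphi(\sigma)u_\sigma$: already for $G=\mathbb{Z}$ with $K$ trivial (so the $L(K)$-bimodule condition is vacuous), the map $\phi(f)(z)=f(z^{2})$ on $L(\mathbb{Z})\cong L^{\infty}(\mathbb{T})$ is normal, ucp, trace-preserving, and sends $u_{1}$ to $u_{2}$, not to a scalar multiple of $u_{1}$. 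Thus knowing the conclusion of Theorem~\ref{Theorem:T}(6) for multipliers only does not give Proposition~\ref{Proposition:characterize-T-vN}(2) for arbitrary $\phi$, and hence does not yield rigidity.

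The paper closes this direction by a different construction, proving (1)$\Rightarrow$(2) directly from an \emph{arbitrary} bimodule. Given an $L(G)$-bimodule $\mathfrak{H}$ with an $L(K)$-central, $(F,\delta)$-central tracial unit vector $\xi^{0}$, the left $L^{\infty}(X)$-action disintegrates $\mathfrak{H}\cong L^{2}(X,\mathcal{H})$ over a Hilbert bundle $\mathcal{H}$. For each $t\in[G]$ the operator $\xi\mapsto u_{t}\xi u_{t}^{\ast}$ intertwines the $L^{\infty}(X)$-representations corresponding to the identity and to $\theta_{t}$, hence is decomposable; the fiber unitaries assemble into a representation $\pi$ of $G$ on $\mathcal{H}$ with $[\pi]_{t}\xi=u_{t}\xi u_{t}^{\ast}$. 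The vector $\xi^{0}$ becomes a $K$-invariant $(F,\delta)$-invariant unit section, and Theorem~\ref{Theorem:T}(7) produces the $H$-invariant section $\eta$, which is the $L(H)$-central vector required in (2). The remaining implications (2)$\Rightarrow$(3)$\Rightarrow$(4) are the easy estimates you describe. Your convex-hull proposal for (3)$\Rightarrow$(2) is therefore not needed, and the substantive missing idea in your sketch is precisely this disintegration of an arbitrary bimodule into a groupoid representation.
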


\begin{proof}
(1)$\Rightarrow $(2) Suppose that $K\leq H\leq G$ has property (T). Fix $%
\varepsilon >0$ and a nonzero projection $p_{0}\in L^{\infty }\left(
X\right) $. Then $p_{0}$ can be seen as the characteristic function of some
Borel subset $B$ of $X$. Let $F$ be a finite subset of $\left[ G\right] $
and $\delta >0$ be obtained from $\varepsilon $ and $B$ via Item (7) of the
characterization of property (T) for triples of groupoids provided by
Theorem \ref{Theorem:T}. Let now $\mathfrak{H}$ be an $L(G)$-bimodule with
an $L(K)$-central and $\left( F,\delta \right) $-central tracial unit vector 
$\xi ^{0}\in L$. We can assume that $\xi ^{0}$ is a cyclic vector for $%
\mathfrak{H}$. The assignment $a\mapsto \left( \xi \mapsto a\xi \right) $
defines a normal *-representation of $L^{\infty }\left( X\right) $ on $%
\mathfrak{H}$. Thus there is a Hilbert bundle $\mathcal{H}=(\mathcal{H}%
_{x})_{x\in X}$ such that $\mathfrak{H}=L^{2}\left( X,\mathcal{H}\right) $
and, for $\xi =\left( \xi _{x}\right) _{x\in X}\in L^{2}\left( X,\mathcal{H}%
\right) $ and $a=\left( a_{x}\right) _{x\in X}\in L^{\infty }\left( X\right) 
$, $a\xi =\left( a_{x}\xi _{x}\right) _{x\in X}$; see \cite[Theorem 14.2.1]%
{kadison_fundamentals_1986} or \cite[Proposition F.26]{williams_crossed_2007}%
. Suppose now that $t\in \left[ G\right] $ and $a\in L^{\infty }\left(
X\right) $. Observe that $u_{t}a=\theta _{t}(a)u_{t}$ where $\theta
_{t}(a)=\left( a_{s\left( xt\right) }\right) _{x\in X}\in L^{\infty }\left(
X\right) $. Therefore we have that $u_{t}a\xi u_{t}^{\ast }=\theta
_{t}(a)u_{t}\xi u_{t}^{\ast }$. This shows that the operator $\xi \mapsto
u_{t}\xi u_{t}^{\ast }$ on $L^{2}\left( X,\mathcal{H}\right) $ intertwines
the normal *-representations $a\mapsto \left( \xi \mapsto a\xi \right) $ and 
$a\mapsto \left( \xi \mapsto \theta _{t}(a)\xi \right) $ of $L^{\infty
}\left( X\right) $ on $\mathfrak{H}$. Therefore $\xi \mapsto u_{t}\xi
u_{t}^{\ast }$ is a decomposable operator; see \cite[Theorem 7.10]%
{takesaki_theory_2002}, \cite[Theorem F.21]{williams_crossed_2007}, or \cite[Subsection 2.5]{gardella_representations_2014}. This means that exists a section $x\mapsto
\pi _{xt}\in B\left( H_{s\left( xt\right) },H_{x}\right) $ such that $%
u_{t}\xi u_{t}^{\ast }=\left( \pi _{xt}\xi _{s\left( xt\right) }\right)
_{x\in X}$ for $\xi \in L^{2}\left( X,(\mathcal{H}_{x})_{x\in X}\right) $;
see \cite[Definition 7.9]{takesaki_theory_2002}. By considering such a
decomposition when $t$ varies within a countable dense subgroup of $\left[ G%
\right] $, and by the essential uniqueness of such a decomposition \cite[%
Proposition F.33]{williams_crossed_2007}, one obtains a representation $%
\gamma \mapsto \pi _{\gamma }$ of $G$ on $\mathcal{H}$ such that $u_{t}\xi
u_{t}^{\ast }=\left[ \pi \right] _{t}\xi $ for every $t\in \left[ G\right] $
and $\xi \in L^{2}\left( X,\mathcal{H}\right) $. Since by assumption $\xi
^{0}$ is $L(K)$-central, we have that $\xi ^{0}$ is $K$-invariant.
Furthermore since $\xi ^{0}$ is $\left( F,\delta \right) $-central, we have
that $\xi ^{0}$ is $\left( F,\delta \right) $-invariant for $\pi $.
Therefore by the choice of $F$ and $\delta $, there exists a non-null Borel
subset $A$ of $X$ and an element $\eta $ of $L^{2}\left( X,\mathcal{H}%
\right) $ such that $\eta $ is $H$-invariant and $\left\Vert \eta _{x}-\xi
_{x}^{0}\right\Vert \leq \varepsilon $ for every $x\in A$. The vector $\eta $
together with the characteristic function $p$ of $A$ witness that the
desired conclusion holds.

(2)$\Rightarrow $(3) Suppose that $M$ is a tracial von Neumann algebra, \ $%
Z\subset M$ is a subalgebra, $q\in Z$ is a projection, and $\mathfrak{H}$ is
an $M$-bimodule. Consider a $Z$-central unit vector $\xi \in \mathfrak{H}$
and an $M$-central vector $\eta \in \mathfrak{H}$ such that $\left\Vert q\xi
-q\eta \right\Vert \leq \tau \left( q\right) ^{1/2}\varepsilon $. Then for
every unitary $u$ in $qMq$ one has that%
\begin{equation*}
\left\Vert u\xi -\xi u\right\Vert \leq \left\Vert u\xi -u\eta \right\Vert
+\left\Vert \xi u-\eta u\right\Vert \leq 2\left\Vert q\xi -q\eta \right\Vert 
\text{.}
\end{equation*}%
One can easily prove the implication (2)$\Rightarrow $(3) using this
observation.

(3)$\Rightarrow $(4) This is obvious.

(4)$\Rightarrow $(1) Suppose that (4) holds. We verify that Item (8) of
Theorem \ref{Theorem:T} holds. To this purpose, fix $\varepsilon >0$. We
want to find a finite subset $Q$ of $\left[ G\right] $ and $\delta >0$ such
that, for every normalized $K$-invariant function of positive type $\varphi $
on $G$ such that $\max_{t\in Q}\int_{x\in X}\left\vert \varphi \left(
xt\right) -1\right\vert ^{2}d\mu (x)\leq \delta $, there is a non-null Borel
subset $A$ of $X$ such that $\mathrm{Re}\left( 1-\varphi (\gamma )\right)
\leq \varepsilon $ for a.e.\ $\gamma \in AHA$. Consider a finite subset $F$
of $L(G)$ and $\delta >0$ with the property that, for every trace-preserving
ucp $L(K)$-bimodule map $\phi :L(G)\rightarrow L(G)$ satisfying $\max_{x\in
F}\left\Vert \phi (x)-x\right\Vert _{2}\leq \delta $, there exists a nonzero
projection $p\in L^{\infty }\left( X\right) $ such that $\left\Vert \phi
(x)-x\right\Vert _{2}\leq \tau \left( p\right) ^{1/2}\varepsilon \left\Vert
x\right\Vert $ for $x\in pL(H)p$. By Kaplanski's density theorem \cite[%
I.9.1.3]{blackadar_operator_2006}, we can assume without loss of generality
that there exists a finite subset $Q$ of $\left[ G\right] $ such that $%
F=\left\{ u_{t}:t\in Q\right\} \subset L(G)$. Consider a $K$-invariant
normalized function of positive type $\varphi $ on $G$. Suppose that $%
\max_{t\in Q}\int_{x\in X}\left\vert \varphi \left( xt\right) -1\right\vert
d\mu (x)\leq \delta $. Let $\phi :L(G)\rightarrow L(G)$ be the
trace-preserving ucp $L(K)$-bimodule map associated with $\varphi $ as in
Proposition \ref{Proposition:ucp-groupoid}. Observe that, for every $t\in Q$%
, $\phi \left( u_{t}\right) =\varphi (t)u_{t}$, where $\varphi (t)\in
L^{\infty }\left( X\right) $ is the function $x\mapsto \varphi \left(
xt\right) $. Therefore for $t\in Q$ we have that%
\begin{equation*}
\left\Vert \phi \left( u_{t}\right) -u_{t}\right\Vert _{2}=\left\Vert
\varphi (t)u_{t}-u_{t}\right\Vert _{2}\leq \left\Vert \varphi
(t)-1\right\Vert _{2}\leq \delta \text{.}
\end{equation*}%
Therefore by assumption there exists a nonzero projection $p\in L^{\infty
}\left( X\right) $ such that for every projection $q\in L^{\infty }\left(
X\right) $ such that $q\leq p$ one has that $\left\Vert \phi
(x)-x\right\Vert _{2}\leq \tau \left( q\right) ^{1/2}\varepsilon \left\Vert
x\right\Vert $ for $x\in qL(H)q$. Let now $A\subset X$ be a Borel subset
such that $p$ is the characteristic function of $A$. Then we have that, for $%
\sigma \in \left[ AHA\right] $, $u_{\sigma }\in pL(H)p$, $\tau \left(
p\right) =\mu (A)$, and hence%
\begin{equation*}
\mu (A)\varepsilon ^{2}\geq \left\Vert \phi \left( u_{\sigma }\right)
-u_{\sigma }\right\Vert _{2}^{2}=\left\Vert \varphi (\sigma )-1\right\Vert
_{2}^{2}=\int_{x\in A}\left\vert \varphi \left( x\sigma \right)
-1\right\vert ^{2}d\mu \text{.}
\end{equation*}%
Since this holds for every $\sigma \in \left[ AHA\right] $, we conclude that 
$\left\vert \varphi (\gamma )-1\right\vert \leq \varepsilon $ for a.e.\ $%
\gamma \in AHA$. This shows that Item (8) of the characterization of
property (T) for triples of groupoids holds. This concludes the proof.
\end{proof}

\bibliographystyle{amsplain}
\bibliography{biblio-superr}

\end{document}